\newtheorem{Theorem}{Theorem}[section]
\newtheorem{Lemma}[Theorem]{Lemma}
\newtheorem{Corollary}[Theorem]{Corollary}
\newtheorem{Proposition}[Theorem]{Proposition}
\newtheorem{Conjecture}[Theorem]{Conjecture}
\newtheorem{Question}[Theorem]{Question}
\newtheorem{Conj}{Conjecture}
\def\qed{\ifhmode\textqed\fi
	\ifmmode\ifinner\hfill\quad\qedsymbol\else\dispqed\fi\fi}
\def\textqed{\unskip\nobreak\penalty50
	\hskip2em\hbox{}\nobreak\hfill\qedsymbol
	\parfillskip=0pt \finalhyphendemerits=0}
\def\dispqed{\rlap{\qquad\qedsymbol}}
\def\p{\mathfrak{p}}
\def\v{\textup{v}}
\def\ZZ{\mathbb{Z}}
\def\HS{\textup{HS}}
\def\set{\textup{set}}
\def\pd{\textup{proj\,dim}}
\def\depth{\textup{depth}}
\def\reg{\textup{reg}}
\def\lcm{\textup{lcm}}
\def\supp{\textup{supp}}
\def\lex{\textup{lex}}
\def\Tor{\textup{Tor}}
\def\Ass{\textup{Ass}}
\def\Lin{\textup{Lin}}
\def\Deg{\textup{Deg}}
\def\Ker{\textup{Ker}}
\def\height{\textup{height}}
\def\Min{\textup{Min}}
\begin{document}
	
	\title{Edge ideals and their asymptotic syzygies}	
	\author{Antonino Ficarra, Ayesha Asloob Qureshi}
	
	\address{Antonino Ficarra, Departamento de Matem\'{a}tica, Escola de Ci\^{e}ncias e Tecnologia, Centro de Investiga\c{c}\~{a}o, Matem\'{a}tica e Aplica\c{c}\~{o}es, Instituto de Investiga\c{c}\~{a}o e Forma\c{c}\~{a}o Avan\c{c}ada, Universidade de \'{E}vora, Rua Rom\~{a}o Ramalho, 59, P--7000--671 \'{E}vora, Portugal}
	\email{antonino.ficarra@uevora.pt}\email{antficarra@unime.it}
	
	\address{Ayesha Asloob Qureshi, Sabanci University, Faculty of Engineering and Natural Sciences, Orta Mahalle, Tuzla 34956, Istanbul,
		Turkey}
	\email{ayesha.asloob@sabanciuniv.edu}\email{aqureshi@sabanciuniv.edu}
	
	\thanks{
	}
	
	\subjclass[2020]{Primary 13F20; Secondary 13F55, 05C70, 05E40.}
	
	\keywords{Monomial ideals, Homological shift ideals, Syzygies}
	
	\begin{abstract}
		Let $G$ be a finite simple graph, and let $I(G)$ denote its edge ideal. In this paper, we investigate the asymptotic behavior of the syzygies of powers of edge ideals through the lens of homological shift ideals $\text{HS}_i(I(G)^k)$. We introduce the notion of the $i$th homological strong persistence property for monomial ideals $I$, providing an algebraic characterization that ensures the chain of inclusions $\text{Ass}\,\text{HS}_i(I)\subseteq\text{Ass}\,\text{HS}_i(I^2)\subseteq\text{Ass}\,\text{HS}_i(I^3) \subseteq\cdots$. We prove that edge ideals possess both the 0th and 1st homological strong persistence properties. To this end, we explicitly describe the first homological shift algebra of $I(G)$ and show that $\text{HS}_1(I(G)^{k+1}) = I(G) \cdot \text{HS}_1(I(G)^k)$ for all $k \ge 1$. Finally, we conjecture that if $I(G)$ has a linear resolution, then $\text{HS}_i(I(G)^k)$ also has a linear resolution for all $k \gg 0$, and we present partial results supporting this conjecture.
	\end{abstract}
	
	\maketitle
	\vspace*{-0.8cm}
	\section*{Introduction}
	
	Let $S = K[x_1, \dots, x_n]$ be the standard graded polynomial ring over a field $K$, and let $I \subset S$ be a graded ideal. The asymptotic behavior of the powers of $I$ has been a central theme in Commutative Algebra for several decades, motivated by foundational questions posed by David Rees in the 1960s. In the 1970s, a number of significant contributions addressed the structure and properties of the set of associated primes $\Ass\,I^k$ of the powers of $I$. A major breakthrough came in 1979 when Brodmann \cite{B79} established that for any ideal $I$ in a Noetherian ring $R$, the sets of associated primes stabilize, that is, $\Ass\,I^k = \Ass\,I^{k+1}$ for all $k \gg 0$. This result, now a cornerstone in the field, led to the definition of the stable set of associated primes, denoted by $\Ass^\infty I$. Shortly thereafter, Brodmann also demonstrated that the depth of the powers $\depth\,R/I^k$ eventually stabilizes \cite{B79a}.
	
	Following these developments, the asymptotic linearity of other invariants of powers of ideals has attracted extensive interest. In 1999, Kodiyalam \cite{Kod99} and, independently, Cutkosky, Herzog, and Trung \cite{CHT99} proved that the Castelnuovo-Mumford regularity $\reg\,I^k$ behaves as a linear function for all $k\gg0$. More recently, Conca \cite{Conca23} and, independently, Ficarra and Sgroi \cite{FS2}, established that the $\v$-number $\v(I^k)$ of the powers of $I$ also exhibits eventual linear behavior (see also \cite{BMS24, Conca23, F2023, FS2, FSPack, FSPackA}). The Noetherian property of the Rees algebra $\mathcal{R}(I)=\bigoplus_{k\ge0}I^k$, viewed as a bigraded ring, provides a structural explanation for this asymptotic linearity.
	
	A major open challenge in this area is to describe explicitly the stable set of associated primes $\Ass^\infty I$, as well as the eventual linear functions $\reg\,I^k$ and $\v(I^k)$, for broad classes of ideals arising from combinatorial structures. This paper contributes to this broader effort by investigating a specific and well-studied family of monomial ideals: edge ideals of finite simple graphs.
	
	Let $G$ be a finite simple graph on the vertex set $V(G) = [n] = \{1, \dots, n\}$ and with edge set $E(G)$. The edge ideal of $G$ is the monomial ideal $I(G) \subset S$ generated by the monomials $x_i x_j$ for each edge $\{i, j\} \in E(G)$. The study of the syzygies and homological properties of powers of edge ideals has a rich history in Commutative Algebra, with connections to combinatorial topology and algebraic geometry; see for instance \cite{AB, BDMS, F-HHZ, JS, HHZ2004, LT, NP}. Unlike general monomial ideals, edge ideals often exhibit more structured and predictable behavior. For instance, it is known that there exist monomial ideals $I \subset S$ for which the behavior of $\Ass\,I^k$ can be arbitrarily complicated \cite[Corollary 4.2]{HNTT}. By contrast, Mart\'inez-Bernal, Morey, and Villarreal \cite{MMV} proved that for any edge ideal $I(G)$, the sequence of associated primes satisfies $\Ass\,I(G)^k\subseteq\Ass\,I(G)^{k+1}$ for all $k \ge 1$. Additionally, while a monomial ideal $I \subset S$ with a linear resolution does not guarantee that $I^k$ has a linear resolution for all $k \ge 2$, Herzog, Hibi, and Zheng \cite{HHZ2004} showed that if $I(G)$ has a linear resolution, then $I(G)^k$ also has a linear resolution for every $k \ge 1$.
	
	Building on these insights, our previous work \cite{FQ} introduced a new approach to studying the syzygies of powers of monomial ideals via the lens of homological shift ideals. Given a monomial ideal $I \subset S$, let ${\bf a} = (a_1, \dots, a_n) \in \mathbb{Z}_{\ge 0}^n$, and define ${\bf x^a} = x_1^{a_1} \cdots x_n^{a_n}$. Following \cite{HMRZ021a}, the $i$th \textit{homological shift ideal} of $I$ is  
	$$
	\HS_i(I)\ =\ ( {\bf x^a}\ :\ \Tor_i^S(K, I)_{\bf a} \ne 0).
	$$
	
	Homological shift ideals have been studied extensively in recent literature \cite{Bay019, Bay2023, BJT019, CF1, CF2, F2, FPack1, F-SCDP, FH2023, FQ, HMRZ021a, HMRZ021b, LW, TBR24} and encode critical multigraded data from the minimal free resolution of $I$. Motivated by conjectures from \cite{CF1} and \cite{HMRZ021a}, we introduced in \cite{FQ} the $i$th \textit{homological shift algebra} of $I$ as the $K$-algebra  
	\[
	\HS_i(\mathcal{R}(I)) = \bigoplus_{k \ge 1} \HS_i(I^k).
	\]
	
	These modules generalize the Rees algebra $\mathcal{R}(I)$ (since $S\oplus\HS_0(\mathcal{R}(I)) = \mathcal{R}(I)$) and provide a higher homological framework for understanding asymptotic behaviors of syzygies. In \cite{FQ}, we proved that if $I$ has linear powers, that is, if all powers $I^k$ have linear resolution, then $\HS_i(\mathcal{R}(I))$ is finitely generated over $\mathcal{R}(I)$, leading to the fact that that many invariants of $\HS_i(I^k)$, such as depth, associated primes, regularity and the $\v$-number behave asymptotically well.
	
	Our goal in this paper is to advance the understanding of the asymptotic behavior of syzygies of powers of edge ideals through the framework of homological shift ideals. Below, we summarize the main lines of investigation:
	\begin{enumerate}
		\item[(a)] We prove that the algebra $\HS_1(\mathcal{R}(I(G)))$ is a $\mathcal{R}(I(G))$-module by showing that $\HS_1(I(G)^{k+1})=I(G)\cdot\HS_1(I(G)^k)$ for all $k\ge1$.\smallskip
		\item[(b)] We explore the sets of associated primes of $\HS_i(I(G)^k)$ for all $k\ge1$. Our goal is to describe the behavior of the sets $\Ass\,\HS_i(I(G)^k)$. To achieve this, we formulate Conjecture \ref{Conj:C-A} and we prove it in several significant cases.\smallskip
		\item[(c)] We examine the linearity of the minimal free resolution of $\HS_i(I(G)^k)$ for $k \gg 0$, assuming that $I(G)$ itself has a linear resolution. In support of this, we propose Conjecture~\ref{Conj:C-B} and prove several supporting results.
	\end{enumerate}\medskip
	
	In Section~\ref{sec2}, we study the first homological shift algebra of an edge ideal $I(G)$. Let $I \subset S$ be a monomial ideal. By \cite[Theorem 1.2]{FQ}, we have $\HS_i(I^{k+1}) \subseteq I\cdot\HS_i(I^k)$ for all $k \gg 0$. So, it is natural to inquire whether equality holds. In general, $I \cdot \HS_i(I^k)$ can be much larger than $\HS_i(I^{k+1})$. Indeed, there exist monomial ideals $I$ such that $\HS_i(I^{k+1}) \ne I \cdot \HS_i(I^k)$ for every $k \ge 1$ \cite[Example 1.1]{FQ}. Remarkably, we demonstrate in Corollary~\ref{Cor:HSRI(G)} that $\HS_1(I(G)^{k+1})=I(G)\cdot\HS_1(I(G)^{k})$ for all $k\ge1$. This result stems from Theorem~\ref{Thm:HS1-I(G)k}, where we explicitly describe $\HS_1(I(G)^k)$ for every $k$. We show that $\HS_1(I(G)^k)$ decomposes as the sum of a ``linear part" $\Lin_1(I(G)^k)$ and a ``non-linear part" $I(G)^{\langle k+1 \rangle}$. Hence, we deduce that $\HS_1(\mathcal{R}(I(G)))$ is a $\mathcal{R}(I(G))$-module. Moreover, Theorem~\ref{Thm:Ratliff<>} establishes that $I(G)^{\langle k+1\rangle}:I(G)=I(G)^{\langle k\rangle}$ for all $k\ge2$.
	
	In Section~\ref{sec3}, we turn to the behavior of the sets $\Ass\,\HS_i(I(G)^k)$ as $k$ varies. Based on extensive computations, we conjecture:
	\begin{Conj}\label{Conj:C-A}
		Let $G$ be a finite simple graph. Then, for all $i\ge0$,
		$$\Ass\,\HS_i(I(G)^{\max(1,i)})\,\subseteq\,\Ass\,\HS_i(I(G)^{i+1})\,\subseteq\,\Ass\,\HS_i(I(G)^{i+2})\,\subseteq\,\cdots.$$
	\end{Conj}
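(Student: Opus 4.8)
The plan is to reduce Conjecture~\ref{Conj:C-A} to two families of identities for $\HS_i(I(G)^k)$, modelled on the case $i=1$: a \emph{product identity} $\HS_i(I(G)^{k+1})=I(G)\cdot\HS_i(I(G)^k)$ and a \emph{colon identity} $\HS_i(I(G)^{k+1}):I(G)=\HS_i(I(G)^k)$, both valid for $k\ge\max(1,i)$. By the algebraic characterization of the $i$th homological strong persistence property developed in Section~\ref{sec3}, these identities force the chain of inclusions in the conjecture: for $i=1$ they follow from Corollary~\ref{Cor:HSRI(G)}, Theorem~\ref{Thm:HS1-I(G)k} and the Ratliff-type identity of Theorem~\ref{Thm:Ratliff<>}, and for $i=0$, where $\HS_0(I(G)^k)=I(G)^k$, the resulting chain is the theorem of Mart\'inez-Bernal, Morey and Villarreal \cite{MMV}. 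So the substance of the conjecture is the range $i\ge2$.

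First, I would look for an explicit description of $\HS_i(I(G)^k)$ generalizing Theorem~\ref{Thm:HS1-I(G)k}: a decomposition $\HS_i(I(G)^k)=\Lin_i(I(G)^k)+N_i(I(G)^k)$ into the contribution $\Lin_i$ of the linear strand of the resolution of $I(G)^k$ and a complementary ``non-linear'' ideal $N_i$, whose generators one reads off from the even-connected structure of the generators of $I(G)^k$. Second, I would prove the product identity for $k\ge\max(1,i)$: the inclusion $\HS_i(I(G)^{k+1})\subseteq I(G)\cdot\HS_i(I(G)^k)$ holds for $k\gg0$ by \cite[Theorem~1.2]{FQ}, and the task is to make it effective from $k=\max(1,i)$ --- where the explicit generators from the first step should give enough degree control --- and to obtain the reverse inclusion by comparing the resolutions of $I(G)^{k+1}$ and $I(G)^k$ via the standard short-exact-sequence and iterated mapping-cone technique for powers of edge ideals \cite{AB, JS}. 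Third, I would establish a Ratliff-type colon identity $N_i(I(G)^{k+1}):I(G)=N_i(I(G)^k)$ generalizing Theorem~\ref{Thm:Ratliff<>}, and combine it with the product identity to deduce $\HS_i(I(G)^{k+1}):I(G)=\HS_i(I(G)^k)$ by a monomial cancellation argument in the spirit of the proof of Theorem~\ref{Thm:Ratliff<>}. With the two identities in place, the characterization of Section~\ref{sec3} delivers the chain.

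The main obstacle is the first step together with the heart of the second: for $i=1$ the decomposition of Theorem~\ref{Thm:HS1-I(G)k} is attainable because the first syzygies of powers of edge ideals are combinatorially transparent, whereas for $i\ge2$ one must control the full higher-homological part of the resolution of $I(G)^k$, which is not described explicitly even at $k=1$ outside special families such as forests, cycles and bipartite graphs \cite{HMRZ021a, CF1, CF2}. A smaller, separate difficulty is the base case $\Ass\,\HS_i(I(G)^{\max(1,i)})\subseteq\Ass\,\HS_i(I(G)^{i+1})$, which requires pinning down why $k=\max(1,i)$, rather than a smaller or larger value, is the right threshold for the product identity. On the positive side, the tail of each chain is manageable when $I(G)$ has a linear resolution: then all powers $I(G)^k$ have linear resolutions by \cite{HHZ2004}, so the finiteness theorem of \cite{FQ} makes $\HS_i(\mathcal{R}(I(G)))$ a finitely generated $\mathcal{R}(I(G))$-module and $\Ass\,\HS_i(I(G)^k)$ stabilizes, reducing the conjecture in this case to the product and colon identities for the finitely many powers before stabilization. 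One might also try to pass to the squarefree setting by polarization, since powers of edge ideals polarize to squarefree monomial ideals, though whether the threshold $\max(1,i)$ and the relevant combinatorics transfer cleanly under polarization is unclear.
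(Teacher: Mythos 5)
The statement you are addressing is posed in the paper as a Conjecture, not a theorem, so there is no full ``paper proof'' to compare against. The paper establishes only $i=0$ (via \cite{MMV}) and $i=1$ (Theorem~\ref{Thm:I(G)-Strong-Persistence}), and for $i\ge2$ offers computational evidence on graphs with up to seven vertices. Your outline does correctly reproduce the paper's $i=1$ mechanism: the decomposition $\HS_1(I(G)^k)=\Lin_1(I(G)^k)+I(G)^{\langle k+1\rangle}$ of Theorem~\ref{Thm:HS1-I(G)k}, the product identity of Corollary~\ref{Cor:HSRI(G)}, and the colon identity $\HS_1(I(G)^{k+1}):I(G)=\HS_1(I(G)^k)$, fed through the characterization Theorem~\ref{Thm:HomStrongPer} and Proposition~\ref{Prop:ithSPP}. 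One small inaccuracy: in the paper, the colon identity for $i=1$ is obtained directly from the decomposition together with Lemmas~\ref{Lem:ee'} and~\ref{Lem:ee'1}, not via Theorem~\ref{Thm:Ratliff<>}, which instead handles $I(G)^{\langle k+1\rangle}:I(G)$ and is not invoked in the proof of Theorem~\ref{Thm:I(G)-Strong-Persistence}.

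Two more substantive gaps. First, the threshold $\max(1,i)$ is not a side issue: the paper exhibits $I=(x_1x_2,x_1x_3,x_2x_3,x_1x_4,x_2x_5,x_3x_6)$ with $(x_1),(x_2),(x_3)\in\Ass\,\HS_2(I)\setminus\Ass\,\HS_2(I^2)$, so for $i=2$ the colon identity must already fail at $k=1$ and the chain cannot start there. Your plan to establish the identities only ``for $k\ge\max(1,i)$'' therefore also requires a shifted version of Theorem~\ref{Thm:HomStrongPer}, whose stated form is an equivalence quantified over all $k\ge1$; the shift is routine since the proofs pass from $k$ to $k+1$, but the proposal should say so and should acknowledge that the naive strong persistence property is known to fail for $i\ge2$. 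Second, and decisively, your step one --- an explicit decomposition of $\HS_i(I(G)^k)$ analogous to Theorem~\ref{Thm:HS1-I(G)k} for $i\ge2$ --- is precisely the open ingredient. Lemma~\ref{Lem:HS1} gives a transparent description of $\HS_1$ of any monomial ideal via pairwise $\lcm$'s; no comparable combinatorial description of $\HS_i$ exists for $i\ge2$ even at $k=1$, and nothing in your sketch supplies one. Your linear-resolution remark yields eventual stabilization of $\Ass\,\HS_i(I(G)^k)$ via Theorem~\ref{Thm:resume-HS(R(I))}, which bounds the problem but does not produce the inclusions in the pre-stable range, and in any case Conjecture~\ref{Conj:C-A} makes no linearity hypothesis. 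As written, the proposal is a reasonable roadmap consistent with the authors' intent, not a proof.
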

	This result is well-known for $i = 0$, first proven by Mart\'inez-Bernal, Morey, and Villarreal \cite{MMV}. In Theorem~\ref{Thm:I(G)-Strong-Persistence}, we prove that Conjecture~\ref{Conj:C-A} holds for $i = 1$. To this end, inspired by \cite{HQ}, we introduce the notion of the $i$th \emph{homological strong persistence property}. In Theorem~\ref{Thm:HomStrongPer}, we show that a monomial ideal $I$ has the $i$th homological strong persistence property if and only if $\HS_i(I^{k+1}):I=\HS_i(I^k)$ for all $k\ge1$. Applying results from Section~\ref{sec2}, we establish this property for $i = 1$ and $I = I(G)$, confirming Conjecture~\ref{Conj:C-A} for $i = 1$. Furthermore, we verified this conjecture computationally for all graphs with up to seven vertices, utilizing the \textit{Macaulay2} \cite{GDS} packages \texttt{HomologicalShiftIdeals} \cite{FPack1} and \texttt{NautyGraphs} \cite{MPNauty}.
	
	Finally, in Section \ref{sec4} we turn to the linearity of powers of $I(G)$. We propose:
	\begin{Conj}\label{Conj:C-B}
		Let $G$ be a finite simple graph such that $I(G)$ has linear resolution. Then, $\HS_i(I(G)^k)$ has linear resolution for all $i\ge0$ and all $k\gg0$.
	\end{Conj}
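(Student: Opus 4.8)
The plan is to treat the case $i=0$ separately and trivially -- here $\HS_0(I(G)^k)=I(G)^k$, which has a linear resolution for every $k$ by Herzog--Hibi--Zheng \cite{HHZ2004}, recalling that $I(G)$ has a linear resolution exactly when the complementary graph $\bar{G}$ is chordal (Fr\"{o}berg) -- and, for $i\ge 1$, to split the statement into an asymptotic part and a numerical part. For the asymptotic part, since $I(G)$ has linear powers the algebra $\HS_i(\mathcal{R}(I(G)))$ is a finitely generated graded $\mathcal{R}(I(G))$-module by \cite{FQ}; applying the Cutkosky--Herzog--Trung--Kodiyalam theorem \cite{CHT99, Kod99} (in its extension to finitely generated modules over a Rees algebra) to this module over the Rees algebra of an ideal generated in degree $2$ yields that, for $k\gg 0$, $\reg\HS_i(I(G)^k)$ agrees with a linear function of $k$ of slope at most $2$. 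For the numerical part, because $I(G)^k$ has a linear resolution and is generated in degree $2k$, the module $\HS_i(I(G)^k)$ is minimally generated in the single degree $2k+i$, so $\reg\HS_i(I(G)^k)\ge 2k+i$ for all $k$. Combining, $\reg\HS_i(I(G)^k)=2k+b_i$ for $k\gg 0$ with $b_i\ge i$, and Conjecture~\ref{Conj:C-B} is reduced to the sharp bound $b_i=i$, i.e.\ $\reg\HS_i(I(G)^k)\le 2k+i$ for $k\gg 0$. As $\HS_i(I(G)^k)$ is generated in one degree, it suffices to equip its minimal monomial generators with an order giving linear quotients, for all $k\gg 0$.

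For $i=1$ I would run this through the structure theory of Section~\ref{sec2}. By Theorem~\ref{Thm:HS1-I(G)k} one has $\HS_1(I(G)^k)=\Lin_1(I(G)^k)+I(G)^{\langle k+1\rangle}$, and by Theorem~\ref{Thm:Ratliff<>} one has $I(G)^{\langle k+1\rangle}:I(G)=I(G)^{\langle k\rangle}$. The plan is: (i) show by induction on $k$ that $I(G)^{\langle k+1\rangle}$ has a linear resolution for $k\gg 0$, using the colon relation to descend from $k+1$ to $k$ and the fact that $I(G)^{\langle k+1\rangle}$ differs from the linearly resolved ideal $I(G)^{k+1}$ only in a controlled way; (ii) show that the linear part $\Lin_1(I(G)^k)$, being assembled from the first linear syzygies of $I(G)^k$, admits linear quotients for $k\gg 0$; and (iii) glue the two pieces via the exact sequence
\[
0\longrightarrow \Lin_1(I(G)^k)\cap I(G)^{\langle k+1\rangle}\longrightarrow \Lin_1(I(G)^k)\oplus I(G)^{\langle k+1\rangle}\longrightarrow \HS_1(I(G)^k)\longrightarrow 0,
\]
which forces one to control the regularity of the intersection term.

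The hard part will be $i\ge 2$, where no structural description of $\HS_i(I(G)^k)$ analogous to the $i=1$ theorem is available. I would try to reduce the conjecture, for fixed $i$, to three sub-statements: (a) that $\HS_i(I(G))$ has a linear resolution whenever $\bar{G}$ is chordal; (b) that the inclusion $\HS_i(I(G)^{k+1})\subseteq I(G)\cdot\HS_i(I(G)^k)$ of \cite[Theorem~1.2]{FQ} is an equality for all $k\ge 1$, mirroring Corollary~\ref{Cor:HSRI(G)}; and (c) that multiplication by the edge ideal preserves linearity of the resolution for the modules $\HS_i(I(G)^k)$. Granting (a)--(c), an induction on $k$ closes the argument. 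Each of (a)--(c) is substantial: (a) must use chordality of $\bar{G}$ essentially, since the analogous statement fails for general monomial ideals with linear quotients, and the natural device is a perfect elimination ordering of $\bar{G}$ together with induction on $|V(G)|$; (b) would require a precise description of the linear syzygies of $I(G)^{k+1}$; and (c) is a Herzog--Hibi--Zheng-type statement that would presumably come from adapting the sortability/$x$-condition technology of \cite{HHZ2004} from ideals to modules. I regard (a) and (b) as the principal obstacles, and the case $k=1$ -- namely that $\HS_i(I(G))$ is linearly resolved for all $i$ when $\bar{G}$ is chordal -- as the right first milestone.
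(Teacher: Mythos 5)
You are attempting to prove Conjecture~\ref{Conj:C-B}, which the paper itself leaves open; it is only established for $i=0,1$ and, in full generality of $i$, for three special families (principal Borel, initial lexsegment, complete multipartite). Your $i=0$ reduction to Herzog--Hibi--Zheng and your framing of linearity as $\reg\HS_i(I(G)^k)=2k+i$ for a module generated in a single degree both match the paper's viewpoint, and your reduction to exhibiting linear quotients is indeed the strategy the paper uses throughout Section~\ref{sec4}.

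For $i=1$, however, your route is both more complicated than the paper's and, once the hypothesis is used, collapses. If $I(G)$ has a linear resolution, then by Herzog--Hibi--Zheng each $I(G)^k$ has a linear resolution, so $\Tor_1^S(K,I(G)^k)$ is concentrated in degree $2k+1$, which forces $\HS_1(I(G)^k)=\Lin_1(I(G)^k)$; the non-linear summand $I(G)^{\langle k+1\rangle}$ is already absorbed, your steps~(i) and~(iii) become vacuous, and the Mayer--Vietoris-style sequence and the intersection term you propose to control never arise. What remains is exactly step~(ii), i.e.\ showing $\Lin_1(I(G)^k)=\HS_1(I(G)^k)$ has linear quotients, for which the paper invokes a single existing result: since $I(G)^k$ has linear quotients (by HHZ/\cite{F-HHZ}/\cite{BDMS}), \cite[Theorem~1.3]{FH2023} gives linear quotients for $\HS_1(I(G)^k)$, for every $k\ge1$, not just $k\gg0$. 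Your use of Theorem~\ref{Thm:Ratliff<>} and the decomposition of Theorem~\ref{Thm:HS1-I(G)k} in this step is therefore misplaced: those results govern the general (non-linearly-resolved) case and play no role in Theorem~\ref{Thm:HS1-I(G)^k-Lin}.

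For $i\ge2$, your reduction to (a)--(c) is a reasonable speculative plan, but note that your sub-statement~(b), the equality $\HS_i(I(G)^{k+1})=I(G)\cdot\HS_i(I(G)^k)$, is explicitly recorded in the paper as an open question for $i>1$, so your induction rests on a problem the authors themselves regard as unresolved. Your sub-statement~(c) is also delicate: multiplication by a linearly resolved ideal does not in general preserve linear resolution, so any argument would have to exploit the edge-ideal structure (e.g.\ the $x$-condition or sortability) rather than a formal module-theoretic fact. The paper sidesteps all of this by proving the conjecture directly in the special families via explicit linear quotients orders on $\mathcal{G}(\HS_j(I^k))$, and in each case the order used is the decreasing lexicographic order together with a detailed combinatorial description of $\set(u)$; nothing in your outline produces such an order. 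In short: your $i=0$ case is fine, your $i=1$ case should be replaced by the direct appeal to \cite[Theorem~1.3]{FH2023}, and your $i\ge2$ plan is a plausible but unexecuted strategy that hits a known open problem at step~(b).
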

	For $i = 0$, this result follows from a theorem of Herzog, Hibi, and Zheng \cite{HHZ2004}. Using some results from \cite{FH2023}, we prove in Theorem~\ref{Thm:HS1-I(G)^k-Lin} that Conjecture~\ref{Conj:C-B} holds for $i = 1$. Additionally, we establish Conjecture~\ref{Conj:C-B} for principal Borel edge ideals, initial lexsegment edge ideals, and edge ideals of complete multipartite graphs.
	
	\section{A quick review on the homological shift algebra}\label{sec1}
	
	In this section, we summarize some of the main results from \cite{FQ}.\smallskip
	
	Let $I\subset S$ be a graded ideal and let $\p\in\Ass(I)$ be an associated prime. Recall that the \textit{regularity} of $I$ is defined as $\reg\,I=\max\{j:\ \Tor^S_i(K,I)_{i+j}\ne0\}$, the \textit{$\v_\p$-number} of $I$ is defined as $\v_\p(I)=\min\{\deg(f):\ f\in S_d,\ (I:f)=\p\}$, and the \textit{$\v$-number} of $I$ is defined as $\v(I)=\min\limits_{\p\in\Ass(I)}\v_\p(I)$.\smallskip
	
	Now, let $I\subset S$ be a monomial ideal and denote by $\mathcal{G}(I)$ its minimal monomial generating set. For systematic reasons, we put $I^0=S$.
	
	The $i$th \textit{homological shift algebra} of $I$ is introduced in \cite{FQ} as the $K$-algebra,
	$$
	\HS_i(\mathcal{R}(I))\ =\ \bigoplus_{k\ge1}\HS_i(I^k).
	$$
	
	Notice that $S\oplus\HS_0(\mathcal{R}(I))$ is just the Rees algebra $\mathcal{R}(I)=\bigoplus_{k\ge0}I^k$ of $I$.
	
	For a monomial ideal $I\subset S$, in general it is not possible to structure $\HS_i(\mathcal{R}(I))$ as a $\mathcal{R}(I)$-module, see \cite[Example 1.1]{FQ}. In fact, $\HS_i(\mathcal{R}(I))$ is a $\mathcal{R}(I)$-module if and only if the inclusion $I\cdot\HS_i(I^k)\subseteq\HS_i(I^{k+1})$ holds for all $k\ge1$.
	
	A monomial ideal $I\subset S$ has \textit{linear powers} if $I^k$ has linear resolution for all $k\ge1$. When $I$ has linear powers, we showed in \cite[Theorem 1.4]{FQ} that the inclusion $I\cdot\HS_i(I^k)\subseteq\HS_i(I^{k+1})$ holds for all $k\ge1$, and also that $\HS_i(\mathcal{R}(I))$ is a finitely generated graded $\mathcal{R}(I)$-module.
	
	The next results were proved in \cite[Theorem 1.4]{FQ} and \cite[Theorem 2.1]{FQ}.
	\begin{Theorem}\label{Thm:resume-HS(R(I))}
		Let $I\subset S$ be a monomial ideal with linear powers. Then $\HS_i(\mathcal{R}(I))$ is a finitely generated graded $\mathcal{R}(I)$-module. Hence, we have $\HS_i(I^{k+1})=I\cdot\HS_i(I^k)$ for all $k\gg0$. Furthermore, the following statements hold.
		\begin{enumerate}
			\item[\textup{(a)}] The set $\Ass\,\HS_i(I^k)$ stabilizes: $\Ass\,\HS_i(I^{k+1})=\Ass\,\HS_i(I^{k})$ for all $k\gg0$.
			\item[\textup{(b)}] For all $k\gg0$, we have $\depth\, S/\HS_i(I^{k+1})=\depth\,S/\HS_i(I^k)$.
			\item[\textup{(c)}] For all $k\gg0$, $\reg\,\HS_i(I^k)$ is a linear function in $k$.
			\item[\textup{(d)}] For all $k\gg0$, $\v(\HS_i(I^k))$ is a linear function in $k$.
			\item[\textup{(e)}] For all $k\gg0$ and all $\p\in\Ass\,\HS_i(I^k)$, $\v_\p(\HS_i(I^k))$ is a linear function in $k$.
		\end{enumerate}
	\end{Theorem}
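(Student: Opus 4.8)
I would split the statement into three parts: first, the module-structure inclusion $I\cdot\HS_i(I^k)\subseteq\HS_i(I^{k+1})$ for every $k\ge1$; second, its partial converse $\HS_i(I^{k+1})\subseteq I\cdot\HS_i(I^k)$ for $k\gg0$, which together with the first yields the equality $\HS_i(I^{k+1})=I\cdot\HS_i(I^k)$ for $k\gg0$ as well as the finite generation of $\HS_i(\mathcal{R}(I))$ over $\mathcal{R}(I)$; third, the assertions (a)--(e), which are formal consequences of the first two. Throughout I would use that, since $I$ has linear powers, $I$ is equigenerated in a single degree $d$, each power $I^k$ is equigenerated in degree $dk$ with a $dk$-linear resolution, and hence $\HS_i(I^k)$ is equigenerated in degree $dk+i$; in particular the minimal generators of $\HS_i(I^k)$ are exactly the monomials ${\bf x^a}$ with $\Tor_i^S(K,I^k)_{\bf a}\ne0$, and every such ${\bf a}$ satisfies $|{\bf a}|=dk+i$, writing $|{\bf a}|=a_1+\cdots+a_n$.

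For the first part, since $I\cdot\HS_i(I^k)$ is generated by the monomials $u\,{\bf x^a}$ with $u\in\mathcal{G}(I)$ and ${\bf x^a}\in\mathcal{G}(\HS_i(I^k))$, it is enough to show $u\,{\bf x^a}\in\HS_i(I^{k+1})$ for each such pair. Writing ${\bf b}$ for the exponent vector of $u$, we have $\Tor_i^S(K,I^k)_{\bf a}\ne0$, $|{\bf a}|=dk+i$, and $|{\bf a}+{\bf b}|=d(k+1)+i$. The key move is to apply $\Tor_\bullet^S(K,-)$ to the short exact sequence of $\ZZ^n$-graded $S$-modules
\[
0\longrightarrow uI^k\longrightarrow I^{k+1}\longrightarrow C\longrightarrow 0,\qquad C=I^{k+1}/uI^k.
\]
If $C=0$ the claim is trivial; otherwise $C$, being a quotient of $I^{k+1}$, is generated in degree $d(k+1)$, so every generator of $\Tor_{i+1}^S(K,C)$ has degree at least $d(k+1)+i+1$. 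Since multiplication by $u$ identifies $uI^k$ with $I^k(-{\bf b})$, we have $\Tor_i^S(K,uI^k)_{{\bf a}+{\bf b}}\cong\Tor_i^S(K,I^k)_{\bf a}\ne0$, a space concentrated in degree $d(k+1)+i$. Hence in the long exact $\Tor$-sequence the connecting homomorphism $\Tor_{i+1}^S(K,C)_{{\bf a}+{\bf b}}\to\Tor_i^S(K,uI^k)_{{\bf a}+{\bf b}}$ vanishes, its source being zero in degree $d(k+1)+i$. Therefore $\Tor_i^S(K,uI^k)_{{\bf a}+{\bf b}}\hookrightarrow\Tor_i^S(K,I^{k+1})_{{\bf a}+{\bf b}}$ is injective, so $\Tor_i^S(K,I^{k+1})_{{\bf a}+{\bf b}}\ne0$ and $u\,{\bf x^a}\in\HS_i(I^{k+1})$.

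For the converse inclusion I would invoke \cite[Theorem~1.2]{FQ}, which gives $\HS_i(I^{k+1})\subseteq I\cdot\HS_i(I^k)$ for all $k\gg0$; alternatively this can be seen directly, since the special fiber ring $F(I)=\mathcal{R}(I)/\m\mathcal{R}(I)$ is standard graded and $\Tor_i^S(K,\mathcal{R}(I))=\bigoplus_{k\ge0}\Tor_i^S(K,I^k)$ is a finitely generated graded $F(I)$-module, say generated in $t$-degrees at most $k_0$, whence $\Tor_i^S(K,I^{k+1})=\sum_{u\in\mathcal{G}(I)}u\cdot\Tor_i^S(K,I^k)$ for $k\ge k_0$ and the inclusion follows by comparing multidegrees. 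Combining this with the first part gives $\HS_i(I^{k+1})=I\cdot\HS_i(I^k)$ for all $k\ge k_0$, hence $\HS_i(I^k)=I^{\,k-k_0}\HS_i(I^{k_0})$ for $k\ge k_0$, and therefore $\HS_i(\mathcal{R}(I))$ is generated as an $\mathcal{R}(I)$-module by the finitely many generators of the $S$-module $\bigoplus_{k=1}^{k_0}\HS_i(I^k)$.

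Finally, I would deduce (a)--(e) from the description $\HS_i(I^k)=I^{\,k-k_0}L$ for $k\ge k_0$, with $L=\HS_i(I^{k_0})$ a fixed ideal, by quoting the asymptotic theory of powers of ideals: (a) and (b) are the eventual stability of $\Ass\,(S/I^jL)$ and of $\depth\,S/I^jL$ (Brodmann \cite{B79,B79a}), (c) is the eventual linearity of $\reg\,I^jL$ (Kodiyalam \cite{Kod99}, Cutkosky--Herzog--Trung \cite{CHT99}), and (d), (e) are the eventual linearity of $\v(I^jL)$ and of $\v_\p(I^jL)$ (Conca \cite{Conca23}, Ficarra--Sgroi \cite{FS2}). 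The hard part is the first part, and within it the vanishing of the connecting homomorphism: it is exactly the $dk$-linearity of the resolution of $I^k$ that pins $|{\bf a}+{\bf b}|$ to $d(k+1)+i$, one unit below the least degree occurring in $\Tor_{i+1}^S(K,C)$, so that the obstruction is forced to be zero; by \cite[Example~1.1]{FQ} the module structure genuinely fails for general monomial ideals, so a hypothesis of this strength is indispensable.
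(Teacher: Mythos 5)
The paper does not prove this theorem in situ; it is stated as a digest of \cite[Theorems~1.4 and~2.1]{FQ}, so there is no in-text argument to compare against. Your reconstruction is correct, and the crux --- the inclusion $I\cdot\HS_i(I^k)\subseteq\HS_i(I^{k+1})$ for all $k\ge1$ --- is handled cleanly: since $I^k$ has a $dk$-linear resolution, $\Tor_i^S(K,I^k)$ is concentrated in total degree $dk+i$, so $|{\bf a}+{\bf b}|=d(k+1)+i$; since $C=I^{k+1}/uI^k$ lives in degrees $\ge d(k+1)$, $\Tor_{i+1}^S(K,C)$ vanishes in all total degrees $\le d(k+1)+i$; hence the connecting map in degree ${\bf a}+{\bf b}$ is zero and $\Tor_i^S(K,uI^k)_{{\bf a}+{\bf b}}\cong\Tor_i^S(K,I^k)_{\bf a}\ne0$ injects into $\Tor_i^S(K,I^{k+1})_{{\bf a}+{\bf b}}$. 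Feeding in the reverse inclusion for $k\gg0$ via the finite generation of $\Tor_i^S(K,\mathcal{R}(I))$ over the fiber ring (which is precisely \cite[Theorem~1.2]{FQ}), deducing finite generation of $\HS_i(\mathcal{R}(I))$ over $\mathcal{R}(I)$, and then reducing (a)--(e) to the asymptotics of the filtration $\{I^jL\}_{j\ge0}$ with $L=\HS_i(I^{k_0})$ fixed, is exactly the right strategy. The only point worth flagging is a mild imprecision in the last step: the results of Brodmann, Kodiyalam, Cutkosky--Herzog--Trung, Conca, and Ficarra--Sgroi that you cite are formulated for the powers $I^k$ themselves rather than for the filtration $I^jL$, so one must appeal to (or re-derive) the extensions of these stabilization and linearity results to finitely generated graded $\mathcal{R}(I)$-modules; that extension is the actual content of \cite[Theorem~2.1]{FQ}, and it would sharpen the write-up to say so explicitly.
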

	
	\section{On the structure of $\HS_1(\mathcal{R}(I(G)))$}\label{sec2}
	
	Let $G$ be a finite simple graph on the vertex set $[n]=\{1,\dots,n\}$, with edge set $E(G)$. The \textit{edge ideal} of $G$ is the monomial ideal $I(G)\subset S=K[x_1,\dots,x_n]$ generated by the monomials $x_ix_j\in S$ for which $\{i,j\}\in E(G)$ is an edge of $G$.
	
	Our goal is to describe the first homological shift algebra of an edge ideal. It will turn out that $\HS_1(\mathcal{R}(I(G)))$ is a $\mathcal{R}(I(G))$-module generated in degree one.
	
	For a monomial ideal $J\subset S$, we denote by $J_{\langle d\rangle}$ the ideal generated by all monomials of degree $d$ belonging to $J$. Let $I\subset S$ be a monomial ideal generated in degree $d$, we define the $i$th \textit{homological linear part} of $I$ to be the  ideal, 
	$$
	\Lin_i(I)\ =\ \HS_i(I)_{\langle d+i\rangle}.
	$$
	
	Given a monomial ideal $I\subset S$ with $\mathcal{G}(I)=\{u_1,\dots,u_m\}$ and an integer $k\ge2$, we define the $k$th \textit{non-pure power} of $I$ to be the ideal,
	$$
	I^{\langle k\rangle}\ =\ (u_{i_1}\cdots u_{i_k}\ :\ 1\le i_1\le\dots\le i_k\le m,\ |\{i_1,\dots,i_k\}|>1).
	$$
	By convention, we put $I^{\langle1\rangle}=I$. Notice that $I^{\langle k\rangle}\subset I^k$ for all $k$.
	
	\begin{Theorem}\label{Thm:HS1-I(G)k}
		Let $G$ be a finite simple graph. For all $k\ge1$, we have
		$$
		\HS_1(I(G)^{k})\ =\ \Lin_1(I(G)^{k})+I(G)^{\langle k+1\rangle}.
		$$
	\end{Theorem}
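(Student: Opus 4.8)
The plan is to prove the two inclusions separately; ``$\supseteq$'' is elementary and ``$\subseteq$'' carries the weight. Throughout I use that $\mathcal{G}(I(G)^k)$ consists exactly of the products of $k$ edges of $G$ (all such products have degree $2k$, the initial degree of $I(G)^k$, so none is a proper multiple of another).

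\emph{The inclusion ``$\supseteq$''.} Since $\Lin_1(I(G)^k)=\HS_1(I(G)^k)_{\langle 2k+1\rangle}$ is generated by monomials belonging to $\HS_1(I(G)^k)$, it is contained in $\HS_1(I(G)^k)$. For the other summand, let $w=u_{i_1}\cdots u_{i_{k+1}}$ be a generator of $I(G)^{\langle k+1\rangle}$ with, say, $u_{i_1}\ne u_{i_2}$ two distinct edges. Then $g=u_{i_1}u_{i_3}\cdots u_{i_{k+1}}$ and $h=u_{i_2}u_{i_3}\cdots u_{i_{k+1}}$ are distinct monomials in $\mathcal{G}(I(G)^k)$, and $\lcm(g,h)=\lcm(u_{i_1},u_{i_2})\,u_{i_3}\cdots u_{i_{k+1}}$ divides $w$. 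The Koszul (Taylor) syzygy on the pair $g,h$ is a nonzero element of the first syzygy module of $I(G)^k$ in multidegree $\lcm(g,h)$; since that finitely generated $\ZZ^n$-graded module is nonzero in this degree, one of its minimal generators has multidegree dividing $\lcm(g,h)$, so $\lcm(g,h)\in\HS_1(I(G)^k)$ and hence $w\in\HS_1(I(G)^k)$. This gives $\Lin_1(I(G)^k)+I(G)^{\langle k+1\rangle}\subseteq\HS_1(I(G)^k)$.

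\emph{Reduction of ``$\subseteq$''.} It suffices to show that every minimal generator $w$ of the ideal $\HS_1(I(G)^k)$ lies in $\Lin_1(I(G)^k)+I(G)^{\langle k+1\rangle}$; for such $w$ one has $\Tor_1^S(K,I(G)^k)_w\ne 0$. Since $I(G)^k$ is generated in degree $2k$ we have $\deg w\ge 2k+1$, and if $\deg w=2k+1$ then $w\in\HS_1(I(G)^k)_{\langle 2k+1\rangle}=\Lin_1(I(G)^k)$. Hence everything reduces to the claim that \emph{if $\Tor_1^S(K,I(G)^k)_w\ne 0$ and $\deg w\ge 2k+2$, then $w$ is a product of $k+1$ edges of $G$ which are not all equal} (so $w\in I(G)^{\langle k+1\rangle}$, and in particular $\deg w=2k+2$). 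I would prove this with the standard fact that $\Tor_1^S(K,I(G)^k)_w\ne 0$ precisely when the graph $\Gamma_w$ — whose vertices are the generators of $I(G)^k$ dividing $w$ and whose edges are the pairs $\{g,h\}$ with $\lcm(g,h)$ properly dividing $w$ — is disconnected. Fix $g\ne h$ in different connected components; then $\lcm(g,h)=w$, since otherwise $\{g,h\}$ would be an edge. Write $g=\overline g\,P$ and $h=\overline g\,Q$ as products of $k$ edges, where $\overline g$ collects a maximal common sub-multiset of edges ($k-r$ of them) and $P=u_1\cdots u_r$, $Q=v_1\cdots v_r$ have disjoint edge-multisets; then $r\ge 1$ and $w=\overline g\cdot\lcm(P,Q)$. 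If $r=1$, then $\deg\lcm(u_1,v_1)=\deg w-2(k-1)\ge 4$, so $u_1$ and $v_1$ are disjoint and $w=\overline g\,u_1v_1$ is a product of $k+1$ edges with $u_1\ne v_1$; done.

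\emph{The case $r\ge 2$, and the main obstacle.} Here one connects $g$ to $h$ inside $\Gamma_w$ by successive single-edge swaps: as neighbours in $\Gamma_w$ lie in the same component, this forces $g$ and $h$ into one component, contradicting the choice of $g,h$ — unless the process stalls at a configuration from which the claim is immediate. Writing $e_t(m)$ for the exponent of $x_t$ in a monomial $m$, set $T=\{t:e_t(Q)>e_t(P)\}$. From $\deg w\ge 2k+2$ one gets $\sum_{t\in T}(e_t(Q)-e_t(P))=\deg w-2k\ge 2$, so $T\ne\emptyset$ and there is an edge $v_{j^*}=\{a,b\}$ of $Q$ with $a\in T$. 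A short exponent computation then shows that, except in one case, one can pick a distinct edge $u_i$ of $P$ sharing a vertex with $v_{j^*}$ such that $g'':=\overline g\,v_{j^*}\prod_{\ell\ne i}u_\ell$ still divides $w$, satisfies that $\lcm(g,g'')$ properly divides $w$, and has strictly fewer edges distinct from $h$; then $\{g,g''\}$ is an edge of $\Gamma_w$ and one recurses on $(g'',h)$. The exceptional case is $b\in T$ with $a,b\notin\supp(P)$; there $T=\{a,b\}$, $Q=v_{j^*}Q'$ with $Q'\mid P$, so $\lcm(P,Q)=P\,x_ax_b$ and $w=\overline g\,P\,\{a,b\}$ is visibly a product of $k+1$ edges, not all equal (since $a,b\notin\supp(P)$ while $r\ge 1$). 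Iterating, $r$ decreases to $1$: for $\deg w=2k+2$ the $r=1$ analysis finishes, while for $\deg w\ge 2k+3$ it would force $\deg\lcm(u_1,v_1)\ge 5$, which is impossible — so that range contains no minimal generator; the alternative terminations (reaching $h$, or an edge of $\Gamma_w$ to $h$) contradict the component separation. This proves the claim, and with it ``$\subseteq$''. The genuinely delicate point is the swap step: the incoming edge must be chosen so that $g''$ \emph{still divides} $w$ while $\lcm(g,g'')$ \emph{strictly drops} below $w$; the strict drop is automatic once $\deg w\ge 2k+3$ but a real constraint when $\deg w=2k+2$, which is exactly why the single stalled configuration above must be isolated and dispatched by hand (there, luckily, $w$ is manifestly a product of $k+1$ edges). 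Everything else — the reduction to $\deg w\ge 2k+2$, the degree identity $\deg\lcm(P,Q)=\deg w-2(k-r)$, and the passage from ``a nonzero multigraded syzygy in degree $m$'' to ``$m\in\HS_1$'' — is routine.
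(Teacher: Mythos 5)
Your strategy is genuinely different from the paper's: where they prove Lemma~\ref{Lem:HS1} ($\HS_1$ is the ideal of pairwise lcm's of generators) and then do an induction on $k$ with a direct case analysis on the structure of $\lcm(u,v)$, you invoke the lcm-lattice characterization of $\Tor_1$ (disconnectedness of the graph $\Gamma_w$ on generators dividing $w$) and argue by a ``single-edge swap'' recursion that tries to connect $g$ to $h$. The ``$\supseteq$'' direction and the reduction to minimal generators of degree $\ge 2k+2$ are fine. One point you silently skip but that is easily repaired: after a swap you recurse on $(g'',h)$, yet $\lcm(g'',h)$ need not equal $w$; however if it does not, then $\{g'',h\}$ is an edge of $\Gamma_w$ and $g''$ already lies in the component of $g$, giving the desired contradiction, so you may freely assume $\lcm(g'',h)=w$ at every stage.

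The genuine gap is in the ``stalled configuration'' analysis, which is the step that actually produces $I(G)^{\langle k+1\rangle}$. You assert that the only obstruction to performing a swap is ``$b\in T$ with $a,b\notin\supp(P)$'' and then state ``there $T=\{a,b\}$''; this implication is false as written. For instance with $\bar g$ arbitrary, $P=x_1x_2\cdot x_3x_4$ and $Q=x_5x_6\cdot x_7x_8$, one has $v_{j^*}=x_5x_6$ with $a=5,b=6\in T$ and $a,b\notin\supp(P)$, yet $T=\{5,6,7,8\}\ne\{a,b\}$ — and in fact a swap does work there (take $u_i=x_1x_2$, and proper divisibility of $\lcm(g,g'')$ in $w$ is witnessed at the coordinate $7\in T\setminus\{a,b\}$). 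The underlying issue is that your stated swap criterion — ``pick $u_i$ sharing a vertex with $v_{j^*}$'' — is not the right one: when $a,b\in T$, divisibility $g''\mid w$ imposes no constraint on $u_i$ at all, and the real constraint is that $\lcm(g,g'')$ properly divide $w$, which fails only under the much more restrictive conditions $T\subseteq\{a,b\}$, $a,b\notin\supp(u_i)$, and $e_a(Q)-e_a(P)=e_b(Q)-e_b(P)=1$. A careful version of the argument does force exactly the conclusion you want (once one rules out $|T|\ne 2$ and shows that in the true stalled case $v_{j^*}$ is the unique edge of $Q$ meeting $T$, so that $\lcm(P,Q)=Px_ax_b$ and $w=\bar g\,P\,v_{j^*}\in I(G)^{\langle k+1\rangle}$), but this derivation is exactly the ``short exponent computation'' you wave at; as written the case analysis is incomplete and the key structural claim about $T$ is asserted rather than proved. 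By contrast, the paper's proof of Theorem~\ref{Thm:HS1-I(G)k} avoids the lcm-lattice altogether and controls the combinatorics through the reduction ``$e_i\ne f_j$ for all $i,j$'' together with induction on $k$, which makes its two cases genuinely exhaustive.
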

	
	To prove Theorem \ref{Thm:HS1-I(G)k}, we first need the following lemma. Given two monomials $u,v\in S$, we put $u:v=\lcm(u,v)/v$.
	\begin{Lemma}\label{Lem:HS1}
		Let $I\subset S$ be a monomial ideal with $\mathcal{G}(I)=\{u_1,\dots,u_m\}$. Then,
		$$
		\HS_1(I)\ =\ (\lcm(u_i,u_j)\ :\ 1\le i<j\le m).
		$$
	\end{Lemma}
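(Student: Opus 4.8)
The plan is to deduce both inclusions of the claimed equality of monomial ideals from the classical structure of the first syzygies of a monomial ideal. Let $F=\bigoplus_{l=1}^{m}S\,e_l$ and let $\varphi\colon F\to I$ be the multigraded surjection with $\varphi(e_l)=u_l$; write $Z=\ker\varphi$ for the first syzygy module of $I$. Since $\Tor_1^S(K,I)\cong Z/\m Z$, the ideal $\HS_1(I)$ is generated exactly by the monomials ${\bf x^a}$ for which $Z$ possesses a minimal multigraded generator of multidegree ${\bf a}$. For each pair $1\le i<j\le m$, consider the Koszul syzygy
$$
\sigma_{ij}\ =\ (u_j:u_i)\,e_i\,-\,(u_i:u_j)\,e_j\ \in\ Z,
$$
which is multihomogeneous of multidegree $\lcm(u_i,u_j)$ and nonzero: since $u_i$ and $u_j$ are distinct elements of $\mathcal{G}(I)$, neither divides the other, so both coefficients $u_j:u_i$ and $u_i:u_j$ are nonconstant monomials.

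\emph{The inclusion $\supseteq$.} This direction uses only that $\sigma_{ij}\in Z$. Fix $i<j$ and a minimal multigraded generating set $\tau_1,\dots,\tau_r$ of $Z$, with $\tau_l$ multihomogeneous of multidegree ${\bf b}_l$. Write $\sigma_{ij}=\sum_l c_l\tau_l$ with $c_l\in S$. Since $\sigma_{ij}$ is multihomogeneous and $F$ is free and multigraded, we may assume each $c_l\tau_l$ is either $0$ or multihomogeneous of the same multidegree $\lcm(u_i,u_j)$; in the latter case $c_l$ is a scalar times a monomial, so ${\bf x}^{{\bf b}_l}$ divides $\lcm(u_i,u_j)$. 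As $\sigma_{ij}\ne0$, at least one such $l$ occurs, and then the generator ${\bf x}^{{\bf b}_l}$ of $\HS_1(I)$ divides $\lcm(u_i,u_j)$; hence $\lcm(u_i,u_j)\in\HS_1(I)$. Since $i<j$ were arbitrary, $(\lcm(u_i,u_j):1\le i<j\le m)\subseteq\HS_1(I)$.

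\emph{The inclusion $\subseteq$.} Here I would invoke the Taylor complex $T_\bullet$ of $u_1,\dots,u_m$, a multigraded free resolution of $S/I$ whose second term is $T_2=\bigoplus_{1\le i<j\le m}S\,e_{ij}$ with $e_{ij}$ of multidegree $\lcm(u_i,u_j)$. Combined with the isomorphism $\Tor_1^S(K,I)\cong\Tor_2^S(K,S/I)$ and the fact that $\Tor_2^S(K,S/I)=H_2(T_\bullet\otimes_S K)$ is a subquotient of $T_2\otimes_S K=\bigoplus_{i<j}K\,e_{ij}$, this shows that $\Tor_1^S(K,I)$ is concentrated in the multidegrees $\lcm(u_i,u_j)$, $1\le i<j\le m$. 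Therefore $\Tor_1^S(K,I)_{\bf a}\ne0$ forces ${\bf x^a}=\lcm(u_i,u_j)$ for some $i<j$, and so $\HS_1(I)\subseteq(\lcm(u_i,u_j):1\le i<j\le m)$. (Equivalently, one could use the classical fact that $Z$ is generated by the $\sigma_{ij}$, together with reduction modulo $\m Z$.) The two inclusions together yield the assertion.

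This lemma is essentially a restatement of standard facts about syzygies of monomials, so I do not expect a genuine obstacle; the only non-formal input is the Taylor resolution (or, equivalently, the fact $Z=\sum_{1\le i<j\le m}S\,\sigma_{ij}$, a form of Buchberger's criterion for sets of monomials), which I would simply cite, after which the lemma follows immediately.
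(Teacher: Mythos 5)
Your proof is correct and rests on the same core fact as the paper's: the first syzygies of the monomial ideal $I$ are governed by the Koszul relations $\sigma_{ij}=(u_j:u_i)e_i-(u_i:u_j)e_j$, whose multidegrees are $\Deg(\lcm(u_i,u_j))$. The only difference is in packaging: the paper extracts a minimal generating subset $\{\mathbf{r}_{i_pj_p}\}$ of $\Ker\varphi$ from among the $\mathbf{r}_{ij}$ (using that these relations generate the kernel) and then handles both inclusions by degree comparisons against that subset, whereas you prove $\supseteq$ using only that each $\sigma_{ij}$ is a nonzero multihomogeneous element of $Z$ (a slightly more economical observation, not needing that they generate $Z$), and you obtain $\subseteq$ by citing the Taylor complex together with $\Tor_1^S(K,I)\cong\Tor_2^S(K,S/I)$. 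Both routes are valid and of comparable length; your $\supseteq$ step is marginally cleaner, while your $\subseteq$ step trades the paper's explicit syzygy bookkeeping for a citation of the Taylor resolution.
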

	\begin{proof}
		Let $F_0=S^m$ be the multigraded free $S$-module with basis ${\bf e}_1,\dots,{\bf e}_m$ and with $\Deg({\bf e}_i)=\Deg(u_i)$ for all $i$. Here, by $\Deg(u)=(a_1,\dots,a_n)\in\ZZ_{\ge0}^n$ we denote the multidegree of a monomial $u=x_1^{a_1}\cdots x_n^{a_n}$. Let $\varphi:F_0\rightarrow I$ be the multigraded map defined by $\varphi({\bf e}_i)=u_i$ for all $i$. The kernel $\Ker(\varphi)$ is generated by the relations ${\bf r}_{ij}=(u_i:u_j){\bf e}_j-(u_j:u_i){\bf e}_i$ for $1\le i<j\le m$, and $\Deg({\bf r}_{ij})=\Deg(\lcm(u_i,u_j))$. Let $\{{\bf r}_{i_1j_1},\dots,{\bf r}_{i_\ell j_\ell}\}$ be a minimal generating set of $\Ker(\varphi)$, and let $F_1=S^\ell$ be the multigraded free $S$-module with basis ${\bf f}_1,\dots,{\bf f}_\ell$ with $\Deg({\bf f}_p)=\Deg({\bf r}_{i_pj_p})$ for all $p$. Let $\psi:F_1\rightarrow F_0$ be the multigraded map defined by $\psi({\bf f}_p)={\bf r}_{i_pj_p}$ for all $p$. Then,
		$$
		F_1\xrightarrow{\psi}F_0\xrightarrow{\varphi}I\rightarrow0
		$$
		is the beginning of the minimal multigraded free resolution of $I$. It follows that
		$$
		\HS_1(I)\ =\ (\lcm(u_{i_1},u_{j_1}),\dots,\lcm(u_{i_\ell},u_{j_\ell}))\ \subseteq\ (\lcm(u_i,u_j)\ :\ 1\le i<j\le m).
		$$
		
		To show the opposite inclusion, let $i<j$. If $\{i,j\}=\{i_p,j_p\}$ for some $p$, we have $\lcm(u_i,u_j)\in\HS_1(I)$. Suppose now that ${\bf r}_{ij}$ is not a minimal generator of $\Ker(\varphi)$. Then, there exist monomials $w_1,\dots,w_\ell\in S$ and coefficients $k_1,\dots,k_\ell\in K$ such that
		${\bf r}_{ij}=\sum_{p=1}^\ell k_pw_p{\bf r}_{i_pj_p}$, and $\Deg({\bf r}_{ij})=\Deg(w_p{\bf r}_{i_pj_p})$ for all $p$ with $k_p\ne0$. We have $k_q\ne0$ for some $q$. Hence $\lcm(u_i,u_j)=w_q\lcm(u_{i_q},u_{j_q})\in\HS_1(I)$.
	\end{proof}
	
	Next, we describe the first homological linear part of the powers of an edge ideal.
	\begin{Proposition}\label{Prop:Lin1I(G)}
		Let $G$ be a finite simple graph. Then, for all $k\ge1$,
		\begin{equation}\label{eq:LinI(G)}
			\Lin_1(I(G)^k)=(e_{1}\cdots e_{k}x_p:\ \textstyle e_i\in\mathcal{G}(I(G)),\ x_p(e_1/x_q)\in I(G)\ \textit{with}\ p\ne q).
		\end{equation}
		In particular, $\Lin_1(I(G)^{k+1})=I(G)\cdot\Lin_1(I(G)^k)$ for all $k\ge1$.
	\end{Proposition}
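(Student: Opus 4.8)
The plan is to combine Lemma~\ref{Lem:HS1} with elementary combinatorics of products of edges. Write $I=I(G)$ and let $\mathcal{G}(I)$ be the set of edge monomials; since $I$ is generated in degree $2$, $I^k$ is generated in degree $2k$ and $\Lin_1(I^k)=\HS_1(I^k)_{\langle 2k+1\rangle}$. First I would record that $\mathcal{G}(I^k)$ is exactly the set of distinct monomials $e_{1}\cdots e_{k}$ with $e_{j}\in\mathcal{G}(I)$: such a monomial has degree $2k$, hence cannot properly divide another of the same form, so all of them are minimal generators. By Lemma~\ref{Lem:HS1}, $\HS_1(I^k)=(\lcm(u,v):u\ne v\in\mathcal{G}(I^k))$, and each such generator has degree at least $2k+1$, so $\HS_1(I^k)_{\langle 2k+1\rangle}$ is generated by those $\lcm(u,v)$ of degree exactly $2k+1$. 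As $\deg\lcm(u,v)=4k-\deg\gcd(u,v)$ for $u,v\in\mathcal{G}(I^k)$, this happens precisely when $u=wx_q$, $v=wx_p$ for a monomial $w$ with $\deg w=2k-1$ and variables $x_p\ne x_q$, in which case $\lcm(u,v)=wx_px_q$. Thus $\Lin_1(I^k)$ is generated by all monomials $wx_px_q$ with $wx_p,wx_q\in I^k$, $\deg w=2k-1$, $p\ne q$. With this description the inclusion $\supseteq$ is immediate: given a generator $e_1\cdots e_kx_p$ of the right-hand side, write $e_1=x_qx_r$ so that $x_px_r\in I$ and $p\ne q$, put $w=x_re_2\cdots e_k$, and note $wx_q=e_1\cdots e_k\in\mathcal{G}(I^k)$, $wx_p=(x_px_r)e_2\cdots e_k\in\mathcal{G}(I^k)$, $\lcm(wx_q,wx_p)=wx_px_q=e_1\cdots e_kx_p$. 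The last assertion is then formal: denoting the right-hand side for exponent $k$ by $\mathcal{L}_k$, multiplying a generator by an edge gives $I\cdot\mathcal{L}_k\subseteq\mathcal{L}_{k+1}$, while any generator $e_1\cdots e_{k+1}x_p$ of $\mathcal{L}_{k+1}$ (with $e_1$ the distinguished edge) equals $e_{k+1}\cdot(e_1\cdots e_kx_p)$ with $e_1\cdots e_kx_p\in\mathcal{L}_k$, so $\mathcal{L}_{k+1}=I\cdot\mathcal{L}_k$, and combined with the main equality this yields $\Lin_1(I(G)^{k+1})=I(G)\cdot\Lin_1(I(G)^k)$.

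The substance is the inclusion $\subseteq$. Fix a generator $M=wx_px_q$ of $\Lin_1(I^k)$ as above. Since $M$ has degree $2k+1$, which is the common degree of the right-hand side generators, it suffices to show that $M$ is itself such a generator. Call an edge $g=\{a,b\}$ occurring in an edge-factorization of a degree-$2k$ monomial $N\in I^k$ \emph{good for} a vertex $t$ if $t\notin\{a,b\}$ and $t$ is adjacent in $G$ to $a$ or to $b$; a short check (choose the distinguished edge to be $g$, and the variable $x_q$ in the definition of the right-hand side to be the endpoint of $g$ not adjacent to $t$) shows that if the factorization of $N$ contains an edge good for $t$, then $N\cdot x_t$ is a generator of the right-hand side. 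Since $M=(wx_q)\cdot x_p=(wx_p)\cdot x_q$ with $wx_q,wx_p\in I^k$ of degree $2k$, applying this to $(N,t)=(wx_q,p)$ and $(N,t)=(wx_p,q)$ reduces everything to the following claim: \emph{either $wx_q$ admits an edge-factorization containing an edge good for $p$, or $wx_p$ admits an edge-factorization containing an edge good for $q$.}

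To prove the claim I would argue by contradiction. Put $N_p=N(p)\cap\supp(w)$ and $N_q=N(q)\cap\supp(w)$, where $N(v)$ is the neighbourhood of $v$ in $G$, and let $a_v$ be the exponent of $x_v$ in $w$. Since $p\in\supp(wx_p)$ and $wx_p\in I^k$, in any factorization $p$ is matched to a vertex of $N_p$, so $N_p\ne\emptyset$; likewise $N_q\ne\emptyset$. If some factorization of $wx_q$ matches a vertex of $N_p$ to a vertex other than $p$, the corresponding edge is good for $p$ and we are done; so assume that in \emph{every} factorization of $wx_q$ each vertex of $N_p$ is matched only to $p$, and symmetrically that in every factorization of $wx_p$ each vertex of $N_q$ is matched only to $q$. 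When $\{p,q\}\notin E(G)$: in a factorization of $wx_q$ every edge incident to $N_p$, and also every edge incident to $p$, has the form $\{p,v\}$ with $v\in N_p$ (the other endpoint of a $p$-edge lies in $N(p)\cap\supp(wx_q)=N_p$ since $q\notin N(p)$), and counting incidences gives $a_p=\sum_{v\in N_p}a_v$; but in a factorization of $wx_p$ the $a_p+1$ edges at $p$ again all go to $N_p$, so $\sum_{v\in N_p}a_v\ge a_p+1$, a contradiction. When $\{p,q\}\in E(G)$: in a factorization of $wx_q$ the vertex $q$ is matched to some $b$, and $\{q,b\}$ is good for $p$ as soon as $b\ne p$ (using $q\in N(p)$); if $q$ is matched to $p$ in every factorization of $wx_q$, a multiplicity count forces $a_p\ge a_q+1$, and the symmetric argument for $wx_p$ forces $a_q\ge a_p+1$, again a contradiction. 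Hence the claim holds, finishing the proof.

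The main obstacle is the last paragraph: converting the hypothesis ``no good edge exists'' into the rigid statement that \emph{every} edge-factorization of $wx_q$ matches the neighbours of $p$ only to $p$, and then extracting a numerical contradiction from the exponent vector of $w$. The bookkeeping is slightly delicate because $p$ and $q$ may occur in $w$ with multiplicity, and because the subcase $\{p,q\}\in E(G)$ genuinely has to be treated separately.
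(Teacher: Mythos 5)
Your proof is correct but takes a genuinely different route from the paper's on the hard inclusion. Both proofs start from Lemma~\ref{Lem:HS1} and the observation that $\Lin_1(I^k)$ is generated by the degree-$(2k+1)$ least common multiples, which you rewrite as $wx_px_q$ with $wx_p,wx_q\in\mathcal{G}(I^k)$, $\deg w=2k-1$, $p\ne q$; the $\supseteq$ direction and the ``in particular'' statement are then handled essentially as in the paper. For $\subseteq$, however, the paper argues by induction on $k$: after cancelling any common edge factor of $u$ and $v$ (so that the two factorizations share no edge), it directly unwinds the identity $ux_p=vx_t$ to produce the distinguished edge $f_1=x_p(e_1/x_q)\in I(G)$. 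You instead give a non-inductive, graph-theoretic argument: you introduce the notion of an edge of a factorization being ``good for'' a vertex $t$, reduce the inclusion to the dichotomy that either $wx_q$ admits a factorization with an edge good for $p$ or $wx_p$ admits one with an edge good for $q$, and then rule out the complementary case by counting incidences between $p$ (resp.\ $q$) and $N(p)\cap\supp(w)$ in the exponent vector of $w$, split according to whether $\{p,q\}\in E(G)$. I checked the counting in both subcases and it is sound (including the minor points that $q\notin N_p$ when $\{p,q\}\notin E(G)$, and that the lower bound $\sum_{v\in N_p}a_v\ge a_p+1$ comes for free from any factorization of $wx_p$). The paper's inductive argument is shorter; your version is longer but makes the combinatorial mechanism more transparent and avoids the relabeling bookkeeping. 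One small presentational remark: the counting only needs a single factorization of $wx_q$ and of $wx_p$, so the phrase ``in every factorization'' is stronger than what you actually use, though it is of course what the contradiction hypothesis provides.
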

	\begin{proof}
		By (\ref{eq:LinI(G)}), it follows immediately that $\Lin_1(I(G)^{k+1})=I(G)\cdot\Lin_1(I(G)^k)$ for all $k\ge1$. So, it remains to prove equation (\ref{eq:LinI(G)}). Since $I(G)^{k}$ is equigenerated in degree $2k$, the ideal $\Lin_1(I(G)^k)$ is generated by the least common multiples $\lcm(u,v)$ of degree $2k+1$, where $u=e_1\cdots e_k$, $v=f_1\cdots f_k$, and $e_i,f_i\in\mathcal{G}(I(G))$ for all $i$. Since $\deg\lcm(u,v)=2k+1$, we have $\lcm(u,v)=ux_p=vx_t$ for some $p\ne t$.
		
		Now, we prove the statement by induction on $k$. For $k=1$, we have $u=e_1=x_ix_j$ and $v=f_1=x_rx_s$ with $u\ne v$. Since $x_pe_1=x_tf_1$ and $x_p\ne x_t$, we may assume that $x_p=x_r$ and $x_t=x_j$. Then $x_i=x_s$ and so $x_p(e_1/x_t)=f_1\in I(G)$, as desired.
		
		For the inductive step, we may assume that $e_i\ne f_j$ for all $i$ and $j$. Indeed, up to a relabeling, suppose that $e_k=f_k$, and consider the monomials $u'=u/e_k$, $v'=v/f_k$. Then $u',v'\in\mathcal{G}(I(G)^{k-1})$, $\lcm(u,v)=\lcm(u',v')e_k$, and $\deg\lcm(u',v')=2(k-1)+1$. By induction, we have $\lcm(u',v')=e_1\cdots e_{k-1}x_p$ and $x_p(e_1/x_q)\in I(G)$ for some $p\ne q$. Then $\lcm(u,v)=e_1\cdots e_kx_p$ is as described in equation (\ref{eq:LinI(G)}).
		
		Let $e_i\ne f_j$ for all $i$ and $j$. Since $ux_p=vx_t$, we have $v=x_p(u/x_t)=f_1\cdots f_k$. Hence $x_p$ divides some $f_j$, say $f_1$. Then $f_1=x_px_s$ for some $s\ne p$. Since $x_s$ divides $x_p(u/x_t)$, it follows that $x_s$ divides some $e_i$, say for $i=1$. Let $e_1=x_sx_q$. Since $e_1\ne f_1$, we have $p\ne q$ and $f_1=x_p(e_1/x_q)\in I(G)$, as desired.
	\end{proof}
	
	We are now in the position to prove Theorem \ref{Thm:HS1-I(G)k}.
	\begin{proof}[Proof of Theorem \ref{Thm:HS1-I(G)k}]
		Note that $\Lin_1(I(G)^k)\subseteq\HS_1(I(G)^k)$ by definition. We claim that $I(G)^{\langle k+1\rangle}\subseteq\HS_1(I(G)^k)$. Indeed, let $w=e_1\cdots e_{k+1}\in\mathcal{G}(I(G)^{\langle k+1\rangle})$ with $e_i\in\mathcal{G}(I(G))$ for all $i$, and $e_1\ne e_2$. Let $u=e_1\cdot e_3\cdots e_{k+1}$ and $v=e_2\cdot e_3\cdots e_{k+1}$. Then $u,v\in\mathcal{G}(I(G)^k)$, and so $\lcm(u,v)=\lcm(e_1,e_2)\cdot e_3\cdots e_{k+1}\in\HS_1(I(G)^k)$ by Lemma \ref{Lem:HS1}. Since $\lcm(u,v)$ divides $w$, it follows that $w\in\HS_1(I(G)^k)$. Hence
		$$
		\Lin_1(I(G)^k)+I(G)^{\langle k+1\rangle}\ \subseteq\ \HS_1(I(G)^k).
		$$
		
		For the opposite inclusion, recall that by Lemma \ref{Lem:HS1}, $\HS_1(I(G)^k)$ is generated by the least common multiples $\lcm(u,v)$ with $u,v\in\mathcal{G}(I(G)^k)$ and $u\ne v$. Let $w=\lcm(u,v)\in\mathcal{G}(\HS_1(I(G)^k))$ be such a minimal generator. Since $I(G)^k$ is equigenerated in degree $2k$, we have $\deg(w)\ge2k+1$. If $\deg(w)=2k+1$, we have $w\in\Lin_1(I(G)^k)$. Suppose now $\deg(w)\ge 2k+2$. Proceeding by induction on $k$, we will show that $w\in I(G)^{\langle k+1\rangle}$ and this will finish the proof. We can write $w=fu=gv$ where $f=v:u$ and $g=u:v$.  Then $\deg(f)=\deg(g)\ge 2$. Let $u=e_1\cdots e_k$ and $v=e_1'\cdots e_k'$, with $e_i,e_i'\in\mathcal{G}(I(G))$ for all $i$.  Let $f=x_{i_1}\cdots x_{i_d}$. Since $fu=gv$ and $\gcd(f,g)=1$, we have that $x_{i_1}$ and $x_{i_2}$ divide $v$.
		
		Suppose first $k=1$. Since $\deg(w)\ge4$, it follows that $w=uv$ and $\gcd(u,v)=1$. Therefore $u=e_1\ne e_1'=v$ and so $w\in I(G)^{\langle 2\rangle}$, as desired.
		
		Now, suppose $k>1$. We may assume that $e_i\ne f_j$ for all $i$ and $j$. Indeed, suppose that $e_k=f_k$, and consider the monomials $u'=u/e_k$, $v'=v/f_k$. Then $u',v'\in\mathcal{G}(I(G)^{k-1})$, $\lcm(u,v)=\lcm(u',v')e_k$, and $\deg\lcm(u',v')\ge2k$. We claim that $\lcm(u',v')\in\mathcal{G}(\HS_1(I(G)^{k-1}))$. Otherwise, there exist $u'',v''\in\mathcal{G}(I(G)^{k-1})$ such that $w''=\lcm(u'',v'')\in\mathcal{G}(\HS_1(I(G)^{k-1}))$ strictly divides $w'=\lcm(u',v')$. Then $u''e_k,v''e_k\in\mathcal{G}(I(G)^k)$, and $\lcm(u''e_k,v''e_k)=w''e_k$ strictly divides $w'e_k=w$. This is absurd because $w\in\mathcal{G}(\HS_1(I(G)^k))$. Hence $w'\in\mathcal{G}(\HS_1(I(G)^{k-1}))$. Since $\deg(w')\ge 2(k-1)+2$, it follows by induction that $w'\in\mathcal{G}(I(G)^{\langle k\rangle})$. Finally, we have $w=w'e_k\in\mathcal{G}(I(G)^{\langle k+1\rangle})$, as desired.
		
		Now, assume that $e_i\ne f_j$ for all $i$ and $j$. We distinguish the two possible cases.\smallskip
		
		\textsc{Case 1.} Suppose that $e_s'=x_{i_1}x_{i_2}$ for some $s$. Let $w'=ue_s'=e_1\cdots e_ke_s'$. We claim that $w'\in I(G)^{\langle k+1\rangle}$. Otherwise $e_1=\dots=e_k=e_s'=x_{i_1}x_{i_2}$, and then $(x_{i_1}x_{i_2})^{k+1}$ divides $fu=gv$. This is not possible. Indeed, since $\gcd(f,g)=1$ and $x_{i_1}x_{i_2}$ divides $f$, then $(x_{i_1}x_{i_2})^{k+1}$ should divide $v$, which is not possible because $\deg(v)=2k$ while $\deg((x_{i_1}x_{i_2})^{k+1})=2(k+1)$. Hence $w'\in I(G)^{\langle k+1\rangle}$, and so $w\in I(G)^{\langle k+1\rangle}$.\smallskip
		
		\textsc{Case 2.} Suppose that $x_{i_1}x_{i_2}\ne e_s'$ for all $s$. Up to a relabeling, we may assume that $e_1'=x_{i_1}x_{j_1}$ and $e_2'=x_{i_2}x_{j_2}$. We have $j_1\ne i_2$, otherwise $e_1'=x_{i_1}x_{i_2}$ against the assumption. Similarly $j_2\ne i_1$. If $x_{j_1}$ divides $f$, then $x_{i_1}x_{j_1}=e_1'$ divides $f$ and we can apply the first case. We can proceed similarly if $x_{j_2}$ divides $f$. Hence, we may assume that $x_{j_1}$ and $x_{j_2}$ do not divide $f$. Then $x_{j_1}x_{j_2}$ divides $u$. If $x_{j_1}x_{j_2}=e_s$ for some $s$, then $x_{i_1}x_{i_2}u=e_1'e_2'(e_1\cdots e_{s-1}e_{s+1}\cdots e_k)\in I(G)^{\langle k+1\rangle}$ because $e_1'\ne e_2'$. Then $w\in I(G)^{\langle k+1\rangle}$. Suppose now, up to a relabeling, that $e_1=x_{j_1}x_{\ell_1}$ and $e_2=x_{j_2}x_{\ell_2}$. Since $e_1\ne e_1'$ and $e_2\ne e_2'$ by our assumption, we obtain that $\ell_1\ne i_1$ and $\ell_2\ne i_2$. Then, we have $e_1'=x_{i_1}(e_1/x_{\ell_1})\ne e_1$. Hence $ux_{i_1}\in\Lin_1(I(G)^k)$ divides $fu=w$ against the fact that $w\in\mathcal{G}(\HS_1(I(G)^k))$. This concludes the proof.
	\end{proof}
	\begin{Corollary}\label{Cor:HSRI(G)}
		Let $G$ be a finite simple graph. For all $k\ge1$,
		$$
		\HS_1(I(G)^{k+1})=I(G)\cdot\HS_1(I(G)^k).
		$$
		In particular, $\HS_1(\mathcal{R}(I(G)))$ is a $\mathcal{R}(I(G))$-module generated in degree one.
	\end{Corollary}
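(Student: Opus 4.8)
The plan is to obtain the corollary directly from Theorem~\ref{Thm:HS1-I(G)k} and Proposition~\ref{Prop:Lin1I(G)}, which already carry all the weight; what remains is elementary bookkeeping. Applying Theorem~\ref{Thm:HS1-I(G)k} to the powers $k$ and $k+1$ gives
$$
\HS_1(I(G)^{k})=\Lin_1(I(G)^{k})+I(G)^{\langle k+1\rangle},\qquad
\HS_1(I(G)^{k+1})=\Lin_1(I(G)^{k+1})+I(G)^{\langle k+2\rangle},
$$
so that $I(G)\cdot\HS_1(I(G)^k)=I(G)\cdot\Lin_1(I(G)^k)+I(G)\cdot I(G)^{\langle k+1\rangle}$. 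By Proposition~\ref{Prop:Lin1I(G)}, $I(G)\cdot\Lin_1(I(G)^k)=\Lin_1(I(G)^{k+1})$ for all $k\ge1$. Hence the whole statement reduces to the purely combinatorial identity $I(G)\cdot I(G)^{\langle k+1\rangle}=I(G)^{\langle k+2\rangle}$ for $k\ge1$: granting it, we get $\HS_1(I(G)^{k+1})=I(G)\cdot\big(\Lin_1(I(G)^k)+I(G)^{\langle k+1\rangle}\big)=I(G)\cdot\HS_1(I(G)^k)$, again by Theorem~\ref{Thm:HS1-I(G)k}.

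I would prove the identity $I(G)\cdot I(G)^{\langle k+1\rangle}=I(G)^{\langle k+2\rangle}$ by inspecting minimal generators. The inclusion ``$\subseteq$'' is immediate: if $e\in\mathcal{G}(I(G))$ and $e_1\cdots e_{k+1}$ is a non-pure product of $k+1$ edges, then $e\cdot e_1\cdots e_{k+1}$ is a product of $k+2$ edges still involving more than one distinct edge, hence a generator of $I(G)^{\langle k+2\rangle}$. For ``$\supseteq$'', take a generator $w=e_1\cdots e_{k+2}$ of $I(G)^{\langle k+2\rangle}$, so there are indices $a\ne b$ with $e_a\ne e_b$; since $k\ge1$ we have $k+2\ge3$, so we may choose $j\notin\{a,b\}$, and then $\prod_{i\ne j}e_i$ is a non-pure product of $k+1$ edges (it still contains $e_a$ and $e_b$), hence lies in $I(G)^{\langle k+1\rangle}$, while $w=e_j\cdot\prod_{i\ne j}e_i\in I(G)\cdot I(G)^{\langle k+1\rangle}$.

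Finally, the identity $\HS_1(I(G)^{k+1})=I(G)\cdot\HS_1(I(G)^k)$ gives in particular the inclusion $I(G)\cdot\HS_1(I(G)^k)\subseteq\HS_1(I(G)^{k+1})$ for all $k\ge1$, so $\HS_1(\mathcal{R}(I(G)))=\bigoplus_{k\ge1}\HS_1(I(G)^k)$ is a graded $\mathcal{R}(I(G))$-module; iterating the identity yields $\HS_1(I(G)^k)=I(G)^{k-1}\cdot\HS_1(I(G))$ for all $k\ge1$, i.e.\ this module is generated in degree one. As for the main point deserving care: there is no deep step, but one must notice that the identity $I\cdot I^{\langle m\rangle}=I^{\langle m+1\rangle}$ fails at the bottom level (already $I\cdot I^{\langle1\rangle}=I^2$, which is in general strictly larger than $I^{\langle2\rangle}$), so it is essential that we only ever invoke it for $m=k+1\ge2$ — which is precisely what the range $k\ge1$ guarantees.
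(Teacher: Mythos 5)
Your proof is correct and takes essentially the same route as the paper: decompose $\HS_1(I(G)^k)$ via Theorem~\ref{Thm:HS1-I(G)k}, apply Proposition~\ref{Prop:Lin1I(G)} to the linear part, and observe that $I(G)\cdot I(G)^{\langle k+1\rangle}=I(G)^{\langle k+2\rangle}$ for $k\ge1$. The only difference is that you spell out the verification of this last identity (and flag its failure at $k=0$), whereas the paper dismisses it with ``it is clear that''; this is a worthwhile detail to include.
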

	\begin{proof}
		By Proposition \ref{Prop:Lin1I(G)}, we have $\Lin_1(I(G)^{k+1})=I(G)\cdot\Lin_1(I(G)^k)$ for all $k\ge1$. Moreover, it is clear that $I(G)^{\langle k+2\rangle}=I(G)\cdot I(G)^{\langle k+1\rangle}$ for all $k\ge1$. Therefore, applying Theorem \ref{Thm:HS1-I(G)k}, for all $k\ge1$ we have
		\begin{align*}
			\HS_1(I(G)^{k+1})\ &=\ \Lin_1(I(G)^{k+1})+I(G)^{\langle k+2\rangle}\\
			&=\ I(G)\cdot\Lin_1(I(G)^{k})+I(G)\cdot I(G)^{\langle k+1\rangle}\ = \ I(G)\cdot \HS_1(I(G)^k),
		\end{align*}
		as desired. Since this equality holds for all $k\ge1$, it follows that $\HS_1(\mathcal{R}(I(G)))$ is a finitely generated bigraded $\mathcal{R}(I(G))$-module, which is generated by its first degree component $\HS_1(I(G))$.
	\end{proof}
	
	It is an open question whether we have $\HS_i(I(G)^{k+1})=I(G)\cdot\HS_i(I(G)^k)$ for all $k\gg0$ and all graphs $G$, when $i>1$. Moreover, for $i>1$, it is not clear whether $\HS_i(\mathcal{R}(I(G)))$ is a $\mathcal{R}(I(G))$-module.
	
	\begin{Theorem}\label{Thm:Ratliff<>}
		Let $G$ be a finite simple graph. Then, for all $k\ge2$,
		$$
		I(G)^{\langle k+1\rangle}:I(G)\ =\ I(G)^{\langle k\rangle}.
		$$
	\end{Theorem}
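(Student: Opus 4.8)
The plan is to prove the two inclusions separately. The inclusion $I(G)^{\langle k\rangle}\subseteq I(G)^{\langle k+1\rangle}:I(G)$ is the easy direction: if $w\in\mathcal{G}(I(G)^{\langle k\rangle})$ and $e\in\mathcal{G}(I(G))$, then $we\in I(G)^{\langle k+1\rangle}$ since $w$ is already a product of $k$ generators, at least two of which are distinct, and multiplying by one more generator preserves the requirement $|\{i_1,\dots,i_{k+1}\}|>1$. Since $I(G)^{\langle k+1\rangle}:I(G)=\bigcap_{e\in\mathcal{G}(I(G))}(I(G)^{\langle k+1\rangle}:e)$, and the membership just checked holds for every $e$, we get $I(G)^{\langle k\rangle}\subseteq I(G)^{\langle k+1\rangle}:I(G)$.

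For the reverse inclusion, take a monomial $w\notin I(G)^{\langle k\rangle}$ and aim to produce an edge $e=x_ax_b\in\mathcal{G}(I(G))$ with $we\notin I(G)^{\langle k+1\rangle}$. The key observation is a combinatorial description of when a monomial fails to lie in a non-pure power: $m\notin I(G)^{\langle \ell\rangle}$ precisely when every way of writing $m$ as (multiple of) a product of $\ell$ edges uses a single edge repeated $\ell$ times, i.e.\ the only $\ell$-matchings (with repetition) covered by $m$ are the ``pure'' ones $e^\ell$. So if $w\notin I(G)^{\langle k\rangle}$, either $w\notin I(G)^k$ at all, or $w$ is divisible by $e^k$ for a \emph{unique} edge $e$ and $w/e^k$ contains no second edge in a compatible way. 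In the first case, $we\notin I(G)^{k+1}\supseteq I(G)^{\langle k+1\rangle}$ for a suitable $e$ and we are done; in the second case, the natural candidate is to multiply by the \emph{same} edge $e$, giving $we$ divisible by $e^{k+1}$, and one must check that $we$ still admits no ``mixed'' factorization into $k+1$ edges. This is where the hypothesis $k\ge 2$ enters: a mixed factorization of $we$ into $k+1$ edges would, after cancelling one copy of $e$, give a mixed factorization of $w$ into $k$ edges (since $k\ge 2$ leaves at least two factors, so the two distinct edges cannot both be the cancelled one), contradicting $w\notin I(G)^{\langle k\rangle}$.

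I expect the main obstacle to be making the case analysis airtight — in particular, handling the situation where $w\in I(G)^k$ but $w\notin I(G)^{\langle k\rangle}$, where one must verify that $w$ is a product of $k$ copies of one edge times possibly extra variables not forming new edges, and then carefully argue that appending that same edge cannot create a new pair of distinct generators dividing $we$. An alternative, cleaner route is to exploit the results already proved: by Corollary~\ref{Cor:HSRI(G)} and Theorem~\ref{Thm:HS1-I(G)k} we know $\HS_1(I(G)^{k+1})=I(G)\cdot\HS_1(I(G)^k)$ and $\HS_1(I(G)^k)=\Lin_1(I(G)^k)+I(G)^{\langle k+1\rangle}$, and from Proposition~\ref{Prop:Lin1I(G)} we have $\Lin_1(I(G)^{k+1})=I(G)\cdot\Lin_1(I(G)^k)$; one may then try to deduce $I(G)^{\langle k+1\rangle}:I(G)=I(G)^{\langle k\rangle}$ from a colon-ideal comparison, though a direct combinatorial argument as above is likely the shortest path. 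Either way, the containment $I(G)^{\langle k+1\rangle}:I(G)\subseteq I(G)^k:I(G)=I(G)^{k-1}$ (by a Rees/Ratliff-type fact for edge ideals, or just by inspection) helps localize the analysis to monomials that already lie in $I(G)^{k-1}$, trimming the case work considerably.
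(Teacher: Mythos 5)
Your easy inclusion and the reduction to monomials $u=e^\ell g$ with $\ell\ge k$, $e\in\mathcal{G}(I(G))$, $g\notin I(G)$ (via the Mart\'inez-Bernal--Morey--Villarreal identity $I(G)^{k+1}:I(G)=I(G)^k$) match the paper's strategy. However, the key step has a genuine gap. You assert that a mixed factorization $f_1\cdots f_{k+1}$ dividing $ue=e^{\ell+1}g$ ``after cancelling one copy of $e$'' yields a mixed $k$-factorization of $u$. This presupposes that $e$ occurs among the $f_i$, which need not hold: the $f_i$ are only required to jointly divide $e^{\ell+1}g$, so each can take one variable from $e$ and one from $g$ (for instance with $e=x_1x_2$ and $x_3\mid g$, some $f_i$ could be $x_2x_3$), leaving $e$ entirely absent from the factorization. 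Moreover, even when $e$ is one of the $f_i$, cancelling it may leave the remaining $k$ factors all equal to a single $f\ne e$, so the resulting factorization of $u$ is not mixed; you would then need a separate argument that $e^k\mid u$ together with $f^k\mid u$ forces the mixed product $e^{k-1}f$ to divide $u$.

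The paper's proof sidesteps both difficulties with a finer, uniform analysis: choose any $f_1\ne e$ among the factors (one exists since the factorization is mixed), write $f_1=x_px_q$ with $x_p\nmid e$, and use $f_1\cdots f_{k+1}\mid e^{\ell+1}g$ together with $g\notin I(G)$ to conclude $x_p\mid g$ and $x_q\mid e$. Writing $e=x_qx_r$ with $r\ne p$, one then checks $u=e^{\ell-1}f_1\cdot x_r(g/x_p)$, so $e^{k-1}f_1$, a product of $k$ generators that is genuinely mixed because $k\ge 2$ and $f_1\ne e$, divides $u$, hence $u\in I(G)^{\langle k\rangle}$. This argument does not care whether $e$ appears among the $f_i$. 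To repair your proof you would need to replace the ``cancel a copy of $e$'' step with an argument of this kind; your alternative sketch via the $\HS_1$ identities does not obviously yield the colon identity, as those results give additive decompositions of $\HS_1(I(G)^k)$ rather than information about $I(G)^{\langle k+1\rangle}:I(G)$ in isolation.
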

	\begin{proof}
		It is clear that $I(G)^{\langle k\rangle}\subseteq I(G)^{\langle k+1\rangle}:I(G)$ for all $k\ge2$. Conversely, let $u\in I(G)^{\langle k+1\rangle}:I(G)$ be a monomial. Since we have $I(G)^{\langle k+1\rangle}\subset I(G)^{k+1}$, we obtain $u\in I(G)^{\langle k+1\rangle}:I(G)\subseteq I(G)^{k+1}:I(G)=I(G)^{k}$,
		where the last equality follows from \cite[Lemma 2.12]{MMV}. Hence $u=e_1\cdots e_kf$ where $f\in S$ is a monomial and $e_i \in \mathcal{G}(I(G))$ for all $i$ . If $e_p\ne e_q$ for some $1\le p<q\le k$, then $e_1\cdots e_k\in I(G)^{\langle k\rangle}$ and so $u\in I(G)^{\langle k\rangle}$, as required. Suppose $e_1=\dots=e_k$. Then $u=e^kf$ where $e=e_1$. If $f$ is divisible by some $v\in\mathcal{G}(I(G))$ with $v\ne e$, then $u\in I(G)^{\langle k\rangle}$. Therefore, we may assume that $u=e^\ell g$ with $\ell\ge k$ and $g\notin I(G)$. Since $u\in I(G)^{\langle k+1\rangle}:I(G)$, we have $ue=e^{\ell+1}g\in I(G)^{\langle k+1\rangle}$. Hence
		\begin{equation}\label{eq:ue}
			ue\ =\ e^{\ell+1}g\ =\ f_1\cdots f_{k+1} h,
		\end{equation}
		where $f_1\cdots f_{k+1}\in I(G)^{\langle k+1\rangle}$, $f_i\in\mathcal{G}(I(G))$ for all $i$, $f_1\ne f_2$, and $h\in S$. Since $f_1\ne f_2$, at least one between $f_1$ and $f_2$ is different from $e$, say $f_1\ne e$. Let $f_1=x_px_q$. Since $f_1\ne e$, we may assume that $x_p$ does not divide $e$. From (\ref{eq:ue}) it follows that $x_p$ divides $g$. Since $g\notin I(G)$, it follows that $x_q$ divides $e$. Let $e=x_qx_r$ with $r\ne p$. Hence $u=e^{\ell-1}f_1 h$ with $h=x_r(g/x_p)$. Since $\ell-1\ge k-1\ge 1$ and $e\ne f_1$, it follows that $e^{\ell-1}f_1\in I(G)^{\langle k\rangle}$, and so $u\in I(G)^{\langle k\rangle}$, as desired.
	\end{proof}
	
	We remark that for $k=1$, it is not true that $I(G)^{\langle 2\rangle}:I(G)=I(G)$. Indeed, it is clear that $e^2\notin I(G)^{\langle 2\rangle}$ for all $e\in\mathcal{G}(I(G))$. It is unclear whether the equality $I^{\langle k+1\rangle}:I=I^{\langle k\rangle}$ holds for all $k\gg0$ and all monomial ideals $I\subset S$.
	
	\section{The homological persistence properties}\label{sec3}
	
	In this section, our goal is to understand the behavior of the sets $\Ass\,\HS_i(I(G)^k)$ for a fixed $i$, as $k$ varies. For this aim, we introduce the so-called \textit{homological persistence properties}, which we define below.
	
	Let $I\subset S$ be a monomial ideal. By $V(I)$ we denote the set of prime ideals of $S$ containing $I$. Let $P\subset S$ be a monomial prime ideal, $S(P)$ be the polynomial ring in the variables which generate $P$ and $S_P$ be the localization of $S$ with respect to $P$. We recall that the \textit{monomial localization} of $I$ is the monomial ideal $I(P)$ of $S(P)$ obtained from $I$ by applying the substitutions $x_i\mapsto 1$ for $x_i\notin P$. Note that $I(P)$ is a monomial ideal in $S(P)$ and $I(P)S_P=IS_P$.
	
	For later use, we notice some basic facts. Let $I,J,L\subset S$ be monomial ideals and let $P\subset S$ be a monomial prime ideal. Then $I:J$ is again a monomial ideal and $(I:J)(P)=I(P):J(P)$. Suppose that $L\subseteq I:J$ and consider the $S$-module $M=(I:J)/L$. Then $M$ is multigraded, and we may replace the localization $M_P$ with the $S(P)$-module $N=(I:J)(P)/L(P)=(I(P):J(P))/L(P)$, preserving the multigraded structure. Furthermore, $P\in\Ass_SI$ if and only if $P\in\Ass_{S(P)}I(P)$. We will use freely these facts. When there is no risk of confusion about the ambient ring, we will simply write $\Ass\, I$ instead of $\Ass_SI$. Therefore, for monomial ideals, one can replace ordinary localization with monomial localization, with the advantage of preserving the multigraded structure.\smallskip
	
	Inspired by \cite{HQ}, we introduce the following concepts which generalize the persistence and strong persistence properties. Let $I\subset S$ be a monomial ideal. We say that $I$ satisfies the \textit{$i$th homological persistence property} if
	$$
	\Ass\,\HS_i(I)\subseteq\Ass\,\HS_i(I^2)\subseteq\Ass\,\HS_i(I^3)\subseteq\cdots.
	$$
	
	Whereas, we say that $I$ satisfies the \textit{$i$th homological strong persistence property} if for all monomial prime ideals $P\in V(I)$, all integers $k>0$, and any monomial $f\in(\HS_i(I^k):P)(P)\setminus\HS_i(I^k)(P)$, there exists a monomial $g\in I(P)$ such that $g\HS_i(I^k)(P)\subseteq\HS_i(I^{k+1})(P)$ and $fg\notin\HS_i(I^{k+1})(P)$.\smallskip
	
	Notice that in the previous definition, the condition $g\HS_i(I^k)(P)\subseteq\HS_i(I^{k+1})(P)$ is automatically satisfied if $i=0$. Hence, when $i=0$, the above properties are just the persistence and the strong persistence property  introduced in \cite{HQ}.
	
	\begin{Proposition}\label{Prop:ithSPP}
		The $i$th homological strong persistence property implies the $i$th homological persistence property.
	\end{Proposition}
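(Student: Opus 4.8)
\noindent The plan is to adapt the classical argument showing that the strong persistence property implies the persistence property, running it at the level of the homological shift ideals $\HS_i(\,\cdot\,)$ and exploiting monomial localization to reduce everything to the graded maximal ideal. Fix $k\ge 1$ and $P\in\Ass\,\HS_i(I^k)$; the goal is $P\in\Ass\,\HS_i(I^{k+1})$. First I would record two preliminary observations. Since $\HS_i(I^k)$ is a monomial ideal, $P$ is automatically a monomial prime ideal. Moreover $\HS_i(I^k)\subseteq I^k$, because every multigraded shift occurring in the minimal free resolution of $I^k$ is divisible by some minimal generator of $I^k$; hence $P\in V(\HS_i(I^k))\subseteq V(I^k)=V(I)$, which is exactly what is needed for the $i$th homological strong persistence hypothesis to be applicable to $P$.

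Next I would pass to the monomial localization at $P$, using the facts recalled just before the statement: $P\in\Ass\,\HS_i(I^k)$ is equivalent to $P(P)\in\Ass_{S(P)}\HS_i(I^k)(P)$, and $(\HS_i(I^k):P)(P)=\HS_i(I^k)(P):P(P)$. Because $P(P)$ is the graded maximal ideal of $S(P)$, the first statement means there is a monomial $f\in S(P)$ with $\HS_i(I^k)(P):f=P(P)$. Such an $f$ satisfies $f\notin\HS_i(I^k)(P)$ and, for every variable $x$ generating $P(P)$, $xf\in\HS_i(I^k)(P)$; hence $f$ lies in $(\HS_i(I^k):P)(P)\setminus\HS_i(I^k)(P)$, which is precisely the set appearing in the definition of the $i$th homological strong persistence property.

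Now I would invoke that property with this $P$, $k$ and $f$ to obtain a monomial $g\in I(P)$ such that $g\,\HS_i(I^k)(P)\subseteq\HS_i(I^{k+1})(P)$ and $fg\notin\HS_i(I^{k+1})(P)$, and then check that $\HS_i(I^{k+1})(P):fg=P(P)$. For the inclusion $P(P)\subseteq\HS_i(I^{k+1})(P):fg$: for each variable $x$ generating $P(P)$ we have $xf\in\HS_i(I^k)(P)$, so $xfg\in g\,\HS_i(I^k)(P)\subseteq\HS_i(I^{k+1})(P)$. The colon ideal is proper since $fg\notin\HS_i(I^{k+1})(P)$, and a proper monomial ideal of $S(P)$ containing the graded maximal ideal $P(P)$ must coincide with it. Hence $P(P)\in\Ass_{S(P)}\HS_i(I^{k+1})(P)$, that is, $P\in\Ass\,\HS_i(I^{k+1})$, which yields the chain of inclusions defining the $i$th homological persistence property.

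I do not anticipate a genuine obstacle: the argument is formal once the right objects are in place, and the bookkeeping between ordinary and monomial localization is already packaged in the remarks preceding the proposition. The only point that deserves care — and the reason the homological version of strong persistence carries the extra clause $g\,\HS_i(I^k)(P)\subseteq\HS_i(I^{k+1})(P)$, which is vacuous when $i=0$ — is that this clause is exactly what lets one pass from $xf\in\HS_i(I^k)(P)$ to $xfg\in\HS_i(I^{k+1})(P)$; without it the final colon computation would not close.
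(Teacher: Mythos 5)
Your argument is correct and follows essentially the same route as the paper's proof: localize at $P$, use that $P$ is the maximal ideal of $S(P)$ to produce $f$ with $\HS_i(I^k)(P):f=P$, apply the $i$th homological strong persistence hypothesis to get $g$, and then verify $\HS_i(I^{k+1})(P):fg=P$. The only small addition over the paper's text is your explicit note that $\HS_i(I^k)\subseteq I^k$ forces $P\in V(I)$, which is a welcome justification for invoking the hypothesis.
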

	\begin{proof}
		Let $P\in\Ass_S\HS_i(I^k)$. Then $P\in\Ass_{S(P)}\HS_i(I^k)(P)$ and $P$ is the maximal ideal of the ring $S(P)$. Therefore, there exists a monomial $f\in S(P)$ such that $\HS_i(I^k)(P):f=P$. Hence $f\in(\HS_i(I^k):P)(P)\setminus\HS_i(I^k)(P)$. By assumption, we can find a monomial $g\in I(P)$ such that $g\HS_i(I^k)(P)\subseteq\HS_i(I^{k+1})(P)$ and $fg\notin\HS_i(I^{k+1})(P)$. Hence $P\subseteq\HS_i(I^{k+1})(P):fg$. Since $fg\notin\HS_i(I^{k+1})(P)$ and $P$ is the maximal ideal of $S(P)$, we conclude that $\HS_i(I^{k+1})(P):fg=P$. This shows that $P\in\Ass_{S(P)}\HS_i(I^{k+1})(P)$, and so $P\in\Ass_S\HS_i(I^{k+1})$.
	\end{proof}
	
	The next result generalizes \cite[Theorem 1.3]{HQ}, and characterizes algebraically the homological strong persistence properties.
	
	\begin{Theorem}\label{Thm:HomStrongPer}
		A monomial ideal $I\subset S$ satisfies the $i$th homological strong persistence property if and only if $\HS_i(I^{k+1}):I=\HS_i(I^k)$ for all $k\ge1$.
	\end{Theorem}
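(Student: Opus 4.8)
The plan is to prove the two implications separately, adapting the argument for the classical case \cite[Theorem 1.3]{HQ} to the homological shift ideals and working systematically with monomial localizations, so that the multigraded structure is preserved. I will use freely that for monomial ideals $A,B\subset S$ and a monomial prime $P$ one has $(A:B)(P)=A(P):B(P)$, and that $P\in\Ass_S A$ if and only if $P\in\Ass_{S(P)}A(P)$; in the latter case $P$ is the graded maximal ideal of $S(P)$, so there is a monomial $h\in S(P)$ with $A(P):h=P$.

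For $(\Leftarrow)$, assume $\HS_i(I^{k+1}):I=\HS_i(I^k)$ for all $k\ge1$. Then $I\cdot\HS_i(I^k)=I\cdot(\HS_i(I^{k+1}):I)\subseteq\HS_i(I^{k+1})$, and localizing gives $I(P)\cdot\HS_i(I^k)(P)\subseteq\HS_i(I^{k+1})(P)$ at every monomial prime $P$. Given $P\in V(I)$, an integer $k>0$, and a monomial $f\in(\HS_i(I^k):P)(P)\setminus\HS_i(I^k)(P)$, the inclusion just recorded makes the condition $g\cdot\HS_i(I^k)(P)\subseteq\HS_i(I^{k+1})(P)$ automatic for \emph{every} monomial $g\in I(P)$; and if $fg\in\HS_i(I^{k+1})(P)$ held for all such $g$, then $f\in\HS_i(I^{k+1})(P):I(P)=(\HS_i(I^{k+1}):I)(P)=\HS_i(I^k)(P)$, a contradiction. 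So some monomial $g\in I(P)$ witnesses the $i$th homological strong persistence property.

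For $(\Rightarrow)$, assume $I$ has the $i$th homological strong persistence property and fix $k\ge1$; I would establish the two inclusions of $\HS_i(I^{k+1}):I=\HS_i(I^k)$ as follows. The inclusion $\HS_i(I^k)\subseteq\HS_i(I^{k+1}):I$, i.e.\ $I\cdot\HS_i(I^k)\subseteq\HS_i(I^{k+1})$, is vacuous for $i=0$ and, for $i=1$, follows from Lemma \ref{Lem:HS1} (for $e\in\mathcal{G}(I)$ and distinct $u,v\in\mathcal{G}(I^k)$ one has $e\cdot\lcm(u,v)=\lcm(eu,ev)$ with $eu\ne ev$ in $I^{k+1}$, so $e\cdot\lcm(u,v)$ is a multiple of the lcm of two distinct generators of $I^{k+1}$). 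For the reverse inclusion I would suppose toward a contradiction that there is a monomial $u\in(\HS_i(I^{k+1}):I)\setminus\HS_i(I^k)$, so $uI\subseteq\HS_i(I^{k+1})$, and pick a monomial $v$ and a monomial prime $P\in\Ass_S\big(S/(\HS_i(I^k):u)\big)$ with $\HS_i(I^k):(uv)=P$. Then $uv\notin\HS_i(I^k)$ and the image $f$ of $uv$ in $S(P)$ lies in $(\HS_i(I^k):P)(P)\setminus\HS_i(I^k)(P)$; moreover $uvI=v\cdot(uI)\subseteq v\cdot\HS_i(I^{k+1})\subseteq\HS_i(I^{k+1})$, so $fg\in\HS_i(I^{k+1})(P)$ for \emph{every} monomial $g\in I(P)$. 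Provided $P\in V(I)$, the hypothesis applied to $P$, $k$, $f$ produces a monomial $g\in I(P)$ with $fg\notin\HS_i(I^{k+1})(P)$, contradicting the previous sentence.

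The step I expect to be the main obstacle is guaranteeing that the prime $P$ above lies in $V(I)$ (together with the companion inclusion $I\cdot\HS_i(I^k)\subseteq\HS_i(I^{k+1})$). For $i=0$ this is immediate: $\HS_0(I^m)=I^m$, and $uI\subseteq\HS_0(I^{k+1})=I^{k+1}\subseteq I^k=\HS_0(I^k)$ forces $I\subseteq\HS_0(I^k):u\subseteq P$, which recovers \cite[Theorem 1.3]{HQ}. In general the argument goes through once one controls the inclusion $\HS_i(I^{k+1})\subseteq\HS_i(I^k)$ — it upgrades $uI\subseteq\HS_i(I^{k+1})$ to $uI\subseteq\HS_i(I^k)$, hence $I\subseteq P$ — and for $i=1$ this inclusion, like $I\cdot\HS_1(I^k)\subseteq\HS_1(I^{k+1})$, can be checked directly from Lemma \ref{Lem:HS1} by dropping a generator from a factorization (these unconditional facts for $i=1$ are what underlie Corollary \ref{Cor:HSRI(G)}). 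Thus the theorem for $i=1$, which is the case needed for edge ideals, follows; for general $i$, where $\HS_i(\mathcal{R}(I))$ need not even be an $\mathcal{R}(I)$-module, the delicate interplay between $\HS_i(I^{k+1})$ and $\HS_i(I^k)$ used to keep $P$ inside $V(I)$ is where the real content lies.
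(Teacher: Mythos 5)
Your $(\Leftarrow)$ direction coincides with the paper's, and for $(\Rightarrow)$ you follow the same strategy: pass to a quotient of $(\HS_i(I^{k+1}):I)$ by $\HS_i(I^k)$, pick a monomial prime $P$ supporting that quotient together with a monomial $f\in(\HS_i(I^k):P)(P)\setminus\HS_i(I^k)(P)$, invoke the strong persistence hypothesis at $P$ to get $g$, and contradict $f\in(\HS_i(I^{k+1}):I)(P)$. The substantive difference is that you explicitly isolate the two preconditions this scheme needs — $I\cdot\HS_i(I^k)\subseteq\HS_i(I^{k+1})$ (so $M=(\HS_i(I^{k+1}):I)/\HS_i(I^k)$ is actually a quotient module) and $P\in V(I)$ (so the hypothesis, which is only quantified over primes containing $I$, applies) — whereas the paper forms $M$ and applies the hypothesis at $P$ without addressing either. (The paper's preliminary remark even begins ``Suppose that $L\subseteq I:J$'', and that containment is never verified in the proof.) You correctly note that both are trivial for $i=0$, that the second reduces to $\HS_i(I^{k+1})\subseteq\HS_i(I^k)$, and that for general $i$ neither is automatic; on these points your write-up is more transparent than the source. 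The one soft spot is your claim that both preconditions hold for $i=1$ for an \emph{arbitrary} monomial ideal as consequences of Lemma~\ref{Lem:HS1}: the ``$\lcm(eu,ev)$'' trick requires producing \emph{distinct minimal} generators of $I^{k+1}$ dividing $eu$ and $ev$, which is clear when $I$ is equigenerated (in particular for edge ideals, where the needed facts indeed follow from Theorem~\ref{Thm:HS1-I(G)k} and Corollary~\ref{Cor:HSRI(G)}) but not for a general monomial ideal. That gap does not affect the paper's applications, but since the theorem is stated for every monomial ideal and every $i$, the observation you raise is a real one.
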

	\begin{proof}
		Assume that $\HS_i(I^{k+1}):I=\HS_i(I^k)$ for all $k\ge1$. Let $P\in V(I)$ be a monomial prime ideal. Then $(\HS_i(I^{k+1}):I)(P)=\HS_i(I^k)(P)$ for all $k\ge1$. Let $f\in(\HS_i(I^{k}):P)(P)\setminus\HS_i(I^k)(P)$ be a monomial. Suppose by contradiction that for all $g\in I(P)$ either $g\cdot\HS_i(I^k)(P)\not\subseteq\HS_i(I^{k+1})(P)$ or $fg\in\HS_i(I^{k+1})(P)$. The first condition never occurs because $\HS_i(I^{k+1})(P):I(P)=\HS_i(I^k)(P)$ for all $k$. Hence $fg\in\HS_i(I^{k+1})(P)$ for all $g\in I(P)$. Thus $f\in\HS_i(I^{k+1})(P):I(P)=\HS_i(I^k)(P)$, which is absurd.
		
		Conversely, suppose that $I$ satisfies the $i$th homological strong persistence property, but  $\HS_i(I^{k+1}):I\ne\HS_i(I^k)$ for some integer $k\ge1$. Since the $S$-module $M=(\HS_i(I^{k+1}):I)/\HS_i(I^k)$ is non-zero and multigraded, we can choose a minimal monomial prime ideal $P$ in the support of $M$. As explained before, we may replace the $S_P$-module $M_P$ with the $S(P)$-module $N=(\HS_i(I^{k+1}):I)(P)/\HS_i(I^k)(P)$. Since $P$ is a minimal prime in the support of $M$, the $S(P)$-module $N$ is of finite length. Therefore, we can find a monomial $\widetilde{f}=f+\HS_i(I^{k})(P)\in N$ for which $\widetilde{f}P=0$ in $N$. That is, $f\in(\HS_i(I^{k}):P)(P)\setminus\HS_i(I^k)(P)$. Then, by the assumption, there exists a monomial $g\in I(P)$ such that $g\HS_i(I^k)(P)\subseteq\HS_i(I^{k+1})(P)$ and $fg\notin\HS_i(I^{k+1})(P)$. This is absurd, because $f\in(\HS_i(I^{k+1}):I)(P)$.
	\end{proof}
	
	As a consequence, we have
	\begin{Corollary}\label{Cor:HomRatliff}
		If a monomial ideal $I\subset S$ satisfies the $i$th homological strong persistence property, then $\HS_i(I^{k+1})=I\cdot\HS_i(I^k)=I*\HS_i(I^k)$ for all $k\gg0$.
	\end{Corollary}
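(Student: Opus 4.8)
The plan is to obtain the statement directly from the algebraic criterion of Theorem~\ref{Thm:HomStrongPer} together with the general containment recalled in the Introduction. By Theorem~\ref{Thm:HomStrongPer}, the hypothesis that $I$ has the $i$th homological strong persistence property is equivalent to the equalities $\HS_i(I^{k+1}):I=\HS_i(I^k)$ for every $k\ge1$. First I would use this in the easy direction: the identity of ideals $\HS_i(I^{k+1}):I=\HS_i(I^k)$ gives in particular $\HS_i(I^k)\subseteq\HS_i(I^{k+1}):I$, which is precisely the inclusion $I\cdot\HS_i(I^k)\subseteq\HS_i(I^{k+1})$. Hence this inclusion holds for \emph{all} $k\ge1$; equivalently, $\HS_i(\mathcal{R}(I))$ carries the structure of a bigraded $\mathcal{R}(I)$-module. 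In particular the $*$-product $I*\HS_i(I^k)$ --- the multiplication taking place inside the module $\HS_i(\mathcal{R}(I))$ --- is defined, and viewed as a subset of $S$ it is nothing but the ordinary ideal product $I\cdot\HS_i(I^k)$, now sitting inside $\HS_i(I^{k+1})$.

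For the reverse inclusion I would invoke \cite[Theorem~1.2]{FQ}: there exists an integer $k_0$ with $\HS_i(I^{k+1})\subseteq I\cdot\HS_i(I^k)$ for all $k\ge k_0$. Combining the two inclusions gives
\[
\HS_i(I^{k+1})\ =\ I\cdot\HS_i(I^k)\ =\ I*\HS_i(I^k)\qquad\text{for all }k\ge k_0,
\]
which is the assertion. A more structural way to package the same argument: by the first step $\HS_i(\mathcal{R}(I))$ is an $\mathcal{R}(I)$-module, and by \cite[Theorem~1.2]{FQ} all of its generators sit in degrees $\le k_0$, so it is a finitely generated graded $\mathcal{R}(I)$-module; for such a module $M$ one has $M_{k+1}=(\mathcal{R}(I))_1\cdot M_k$ for every $k\ge k_0$, which is exactly the displayed equality.

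I do not expect any real obstacle: all the substance has already been absorbed into Theorem~\ref{Thm:HomStrongPer} and the cited \cite[Theorem~1.2]{FQ}. The only points worth stating with care are that the colon \emph{equality} intervenes solely through the trivial inclusion $\HS_i(I^k)\subseteq\HS_i(I^{k+1}):I$, and that once $\HS_i(\mathcal{R}(I))$ is known to be an $\mathcal{R}(I)$-module the identity $I\cdot\HS_i(I^k)=I*\HS_i(I^k)$ holds by definition, so the third equality is a formal consequence of the first two rather than a separate claim.
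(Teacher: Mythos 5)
Your proof is correct and follows essentially the same route as the paper: one inclusion from the colon identity given by Theorem~\ref{Thm:HomStrongPer}, the other from \cite[Theorem~1.2]{FQ}, with the remark that the $*$-product agrees with the ordinary product once the module structure is in place. The extra packaging via finitely generated graded $\mathcal{R}(I)$-modules is a harmless elaboration of the same argument.
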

	\begin{proof}
		By Theorem \ref{Thm:HomStrongPer} and the assumption, $\HS_i(I^{k+1}):I=\HS_i(I^k)$ for all $k\ge1$. Hence $I\cdot\HS_i(I^k)\subseteq\HS_i(I^{k+1})$ for all $k\ge1$. Conversely, by \cite[Theorem 1.2]{FQ} we have $\HS_i(I^{k+1})\subseteq I\cdot\HS_i(I^k)$ for all $k\gg0$.
	\end{proof}
	
	In \cite{Rat}, Ratliff showed that $I^{k+1}:I=I^k$ for $k\gg0$, if $I$ is an ideal of a Noetherian ring $R$. In particular, if $I\subset S$ is a monomial ideal, then $\HS_0(I^{k+1}):I=\HS_0(I^k)$ for all $k\gg0$. This property is no longer valid for the higher homological shift ideals. Indeed, let $I=(x^2,y^2,xyz)\subset S=K[x,y,z]$. It is shown in \cite[Example 1.3]{FQ} that $\HS_2(I^{k+1})\ne I\cdot\HS_2(I^k)$ for all $k\ge1$. By Corollary \ref{Cor:HomRatliff}, $I$ does not satisfies the second homological strong persistence property. However, $I$ satisfies the second homological persistence property. Indeed, it was shown in \cite[Example 1.3]{FQ} that
	$$
	\HS_2(I^k)=(x^py^q\ :\ p+q=2k\ \textit{and}\ p,q\ \textit{are odd}).
	$$
	Hence, all monomials $u\in\mathcal{G}(\HS_2(I^k))$ are divided by $xy$. Using \cite[Proposition 5.1]{FS2} it follows that $\Ass\,\HS_2(I)=\{(x),(y)\}$ since $\HS_2(I)=(xy)$ is a principal ideal, and $\Ass\,\HS_2(I^k)=\{(x),(y),(x,y)\}$ for $k>1$, because $\HS_2(I^k)$ is not principal. Hence, the ideal $I$ satisfies the second homological persistence property.\smallskip
	
	Let $I\subset S$ be an ideal. By $\Min(I)$ we denote the set of associated primes of $I$ which are minimal with respect to the inclusion.
	\begin{Proposition}\label{prop:Min}
		Let $I\subset S$ be a monomial ideal satisfying the $i$th homological strong persistence property and such that $\HS_i(I^{k+1})=I\cdot\HS_i(I^k)$ for all $k\ge1$. Then $\Min\,\HS_i(I^k)=\Min\,\HS_i(I)$ for all $k\ge1$.
	\end{Proposition}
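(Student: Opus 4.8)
The plan is to pass to radicals and exploit the rigid form that the hypothesis $\HS_i(I^{k+1})=I\cdot\HS_i(I^k)$ imposes on $\HS_i(I^k)$ once it is iterated. First I would iterate it: a trivial induction on $k$ shows that $\HS_i(I^k)=I^{k-1}\cdot\HS_i(I)$ for all $k\ge1$, the case $k=1$ being the identity $\HS_i(I)=I^0\cdot\HS_i(I)$ and the inductive step being $\HS_i(I^{k+1})=I\cdot\HS_i(I^k)=I\cdot I^{k-1}\HS_i(I)=I^k\HS_i(I)$.

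Next I would record the elementary containment $\HS_j(J)\subseteq J$, valid for every monomial ideal $J$ and every $j\ge0$. This follows by induction on $j$ from the minimal multigraded free resolution $\cdots\to F_1\to F_0\to J\to0$: the basis elements of $F_0$ carry the minimal generators of $J$ as multidegrees, and if every basis element of $F_{j-1}$ has multidegree divisible by some minimal generator of $J$, then a minimal generator of the $j$th syzygy module, being a nonzero multihomogeneous vector in $F_{j-1}$, has multidegree dominating that of some basis element of $F_{j-1}$, hence again divisible by a minimal generator of $J$. Since the multidegrees occurring this way are exactly the generators of $\HS_j(J)$, we get $\HS_j(J)\subseteq J$; in particular $\HS_i(I)\subseteq I$, so $\sqrt{\HS_i(I)}\subseteq\sqrt I$.

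Combining the two observations, for $k\ge2$ we obtain
\[
\sqrt{\HS_i(I^k)}=\sqrt{I^{k-1}\cdot\HS_i(I)}=\sqrt{I^{k-1}}\cap\sqrt{\HS_i(I)}=\sqrt{I}\cap\sqrt{\HS_i(I)}=\sqrt{\HS_i(I)},
\]
using $\sqrt{AB}=\sqrt A\cap\sqrt B$, then $\sqrt{I^{k-1}}=\sqrt I$, and finally $\sqrt{\HS_i(I)}\subseteq\sqrt I$ from the previous step; for $k=1$ the identity $\sqrt{\HS_i(I^k)}=\sqrt{\HS_i(I)}$ is vacuous. Since the set $\Min(L)$ of primes minimal over an ideal $L$ depends only on $\sqrt L$, this gives $\Min\,\HS_i(I^k)=\Min\,\HS_i(I)$ for all $k\ge1$, as claimed.

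The only step that is not pure formalism is the containment $\HS_i(I)\subseteq I$; I expect that, or its citation from the homological-shift-ideal literature, to be the single point demanding care. I would also remark that, strictly speaking, the strong persistence hypothesis enters only through the already-assumed equality $\HS_i(I^{k+1})=I\cdot\HS_i(I^k)$: by Theorem~\ref{Thm:HomStrongPer} and Proposition~\ref{Prop:ithSPP} it additionally supplies the inclusions $\Ass\,\HS_i(I^k)\subseteq\Ass\,\HS_i(I^{k+1})$, which are consistent with, but not needed for, the statement about minimal primes.
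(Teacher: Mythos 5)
Your proof is correct, and it takes a genuinely different (and in one respect cleaner) route than the paper's. Both arguments start by iterating the given equality to get $\HS_i(I^k)=I^{k-1}\HS_i(I)$, but from there they diverge. The paper argues directly with primes: it fixes $P\in\Min\HS_i(I^k)$, splits into the cases $\mathcal G(I)\not\subseteq P$ and $\mathcal G(I)\subseteq P$, in each case deduces $\HS_i(I)\subseteq P$ (in the second case via the chain $\HS_i(I)\subseteq\cdots\subseteq\HS_1(I)\subseteq I$), chooses $Q\in\Min\HS_i(I)$ inside $P$, and then invokes the strong persistence property through Proposition~\ref{Prop:ithSPP} to place $Q$ in $\Ass\HS_i(I^k)$ and force $Q=P$; the reverse inclusion also uses persistence. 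You bypass the case analysis entirely by passing to radicals: from $\HS_i(I)\subseteq I$ (which you establish correctly by the standard multidegree-domination argument along the minimal multigraded resolution) you get $\sqrt{\HS_i(I^k)}=\sqrt{I^{k-1}}\cap\sqrt{\HS_i(I)}=\sqrt I\cap\sqrt{\HS_i(I)}=\sqrt{\HS_i(I)}$, and equality of radicals gives equality of minimal primes.

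The point you flag at the end is in fact sharper than a remark: the strong persistence hypothesis is genuinely superfluous here. The persistence inclusion $\Ass\HS_i(I)\subseteq\Ass\HS_i(I^k)$, which the paper leans on, can be replaced by the observation that $\HS_i(I^k)=I^{k-1}\HS_i(I)\subseteq\HS_i(I)$, so any prime over $\HS_i(I)$ already lies over $\HS_i(I^k)$; and your radical identity makes this replacement automatic. So your argument proves the conclusion under the single hypothesis $\HS_i(I^{k+1})=I\cdot\HS_i(I^k)$ for all $k\ge1$, which slightly strengthens the proposition. One might note that the containment $\HS_i(J)\subseteq J$ is also implicit in the paper's Case~2 (via Lemma~\ref{Lem:HS1} and the chain of homological shift ideals); your proof just isolates it as the only nontrivial input.
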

	\begin{proof}
		By the assumption, we have $\HS_i(I^{k})=I\cdot\HS_i(I^{k-1})=\dots=I^{k-1}\cdot\HS_i(I)$ for all $k\ge2$. Let $P\in\Min\,\HS_i(I^k)$ with $k\ge2$. Then, for all $u\in\mathcal{G}(I)$ we have
		$$
		(u^{k-1})\cdot\HS_i(I)\ \subseteq\ I^{k-1}\cdot\HS_i(I)=\HS_i(I^k)\ \subseteq\ P.
		$$
		
		Suppose there exists $u\in\mathcal{G}(I)\setminus P$. Since $P$ is a prime ideal, it follows that $\HS_i(I)\subseteq P$. Hence, there exists $Q\in\Min\,\HS_i(I)$ such that $\HS_i(I)\subseteq Q\subseteq P$. Since $I$ satisfies the $i$th homological strong persistence property, Proposition \ref{Prop:ithSPP} implies $Q\in\Min\,\HS_i(I)\subseteq\Ass\,\HS_i(I)\subseteq\Ass\,\HS_i(I^k)$. Thus $P=Q$ and so $P\in\Min\,\HS_i(I)$.
		
		Otherwise, suppose that $u\in P$ for all $u\in\mathcal{G}(I)$. Recall that by Lemma \ref{Lem:HS1} we have $\HS_1(I)=(\lcm(u,v):\ u,v\in\mathcal{G}(I),u\ne v)$. Since we obviously have the inclusions $\HS_i(I)\subseteq\HS_{i-1}(I)\subseteq\dots\subseteq\HS_1(I)$, it follows that $\HS_i(I)\subseteq\HS_1(I)\subseteq P$. Arguing as before, we then have $P\in\Min\,\HS_i(I)$.
		
		Hence, we have shown that $\Min\,\HS_i(I^k)\subseteq\Min\,\HS_i(I)$ for all $k$. Since by Proposition \ref{Prop:ithSPP} we have the inclusions $\Min\,\HS_i(I)\subseteq\Ass\,\HS_i(I)\subseteq\Ass\,\HS_i(I^k)$, we conclude that $\Min\,\HS_i(I^k)=\Min\,\HS_i(I)$ for all $k$.
	\end{proof}
	
	Now, we turn our attention to edge ideals. Due to several computational evidence, we expect that
	\begin{Conjecture}\label{Conj:HS-Ass-I(G)}
		Let $G$ be a finite simple graph. Then, for all $i\ge0$, $$\Ass\,\HS_i(I(G)^{\max(1,i)})\,\subseteq\,\Ass\,\HS_i(I(G)^{i+1})\,\subseteq\,\Ass\,\HS_i(I(G)^{i+2})\,\subseteq\,\cdots.$$
	\end{Conjecture}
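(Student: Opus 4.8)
\smallskip
\noindent\textbf{Proof proposal.} The case $i=0$ of Conjecture~\ref{Conj:HS-Ass-I(G)} is the theorem of Mart\'inez-Bernal, Morey and Villarreal \cite{MMV}, so the substantive target reachable with the machinery of Section~\ref{sec2} is $i=1$; for $i\geq 2$ one lacks an explicit description of $\HS_i(I(G)^k)$ and the approach below stalls. I would obtain the case $i=1$ from the $1$st homological strong persistence property of $I(G)$. By Proposition~\ref{Prop:ithSPP} it suffices to prove that $I(G)$ has this property, and by Theorem~\ref{Thm:HomStrongPer} this amounts to the equality
\[
\HS_1(I(G)^{k+1}) : I(G) \;=\; \HS_1(I(G)^{k}) \qquad \text{for all } k\geq 1 .
\]

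The inclusion $\supseteq$ is immediate: Corollary~\ref{Cor:HSRI(G)} gives $I(G)\cdot\HS_1(I(G)^{k})=\HS_1(I(G)^{k+1})$, whence $\HS_1(I(G)^{k})\subseteq\HS_1(I(G)^{k+1}):I(G)$. For $\subseteq$, let $u$ be a monomial with $u\,I(G)\subseteq\HS_1(I(G)^{k+1})$. Since $\HS_1(J)\subseteq\HS_0(J)=J$ for any monomial ideal $J$, we get $u\in I(G)^{k+1}:I(G)=I(G)^{k}$ by \cite[Lemma~2.12]{MMV}; and since every element of $\mathcal{G}(\HS_1(I(G)^{k+1}))$ has degree at least $2(k+1)+1$, necessarily $\deg u\geq 2k+1$. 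I would then feed this into the decomposition of Theorem~\ref{Thm:HS1-I(G)k}, namely $\HS_1(I(G)^{k})=\Lin_1(I(G)^{k})+I(G)^{\langle k+1\rangle}$, and argue that $u$ lies in one of the two summands. If $u$ admits an edge factorization $u=e_1\cdots e_{k}w$ in which the $e_j$ are not all equal and $w$ is still divisible by some $e'\in\mathcal{G}(I(G))$, then $e_1\cdots e_{k}e'$ is a non-pure product of $k+1$ edges dividing $u$, so $u\in I(G)^{\langle k+1\rangle}$. In the remaining case $u$ is ``almost a pure power'', and I would adapt the argument proving Theorem~\ref{Thm:Ratliff<>}: choose a single edge $e$ so that $ue$ is a minimal generator of $u\,I(G)$, use $\HS_1(I(G)^{k+1})=I(G)\cdot\HS_1(I(G)^{k})$ to write $ue=e''v$ with $e''\in\mathcal{G}(I(G))$ and $v\in\mathcal{G}(\HS_1(I(G)^{k}))$, and then chase the variables of $e$ through the monomial identity $ue=e''v$ — invoking the explicit shape of $\Lin_1(I(G)^{k})$ from Proposition~\ref{Prop:Lin1I(G)} when $v$ lies in the linear part, and Theorem~\ref{Thm:Ratliff<>} when $v\in I(G)^{\langle k+1\rangle}$ — to cancel $e$ and land $u$ in $\Lin_1(I(G)^{k})+I(G)^{\langle k+1\rangle}$. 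With the displayed equality in hand, Theorem~\ref{Thm:HomStrongPer} yields the $1$st homological strong persistence property and Proposition~\ref{Prop:ithSPP} gives the chain $\Ass\,\HS_1(I(G))\subseteq\Ass\,\HS_1(I(G)^{2})\subseteq\cdots$.

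The crux — and the step I expect to fight with — is the last part of the inclusion $\subseteq$: the colon operation does not distribute over the sum $\Lin_1(I(G)^{k})+I(G)^{\langle k+1\rangle}$, so one cannot simply combine $I(G)^{\langle k+2\rangle}:I(G)=I(G)^{\langle k+1\rangle}$ (Theorem~\ref{Thm:Ratliff<>}) with a ``linear'' analogue term by term. What must be shown is that a monomial $u$ annihilating $I(G)$ modulo $\HS_1(I(G)^{k+1})$ is actually forced into one of the two summands, and this is exactly where the combinatorics of $G$ — which pairs of variables span an edge — enters and the case bookkeeping becomes delicate; the pure-power situation, where $\Lin_1$ and $I(G)^{\langle k+1\rangle}$ genuinely interact, is the hard nucleus. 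Finally, I would stress that this route does not extend to $i\geq 2$: there is no known counterpart of Theorem~\ref{Thm:HS1-I(G)k}, hence no description of $\HS_i(I(G)^{k+1}):I(G)$, so even the $i$th homological strong persistence property of $I(G)$ — and with it Conjecture~\ref{Conj:HS-Ass-I(G)} in that range — remains open.
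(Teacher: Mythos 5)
Your high-level strategy coincides with the paper's: reduce the $i=1$ case to the $1$st homological strong persistence property, appeal to Theorem~\ref{Thm:HomStrongPer} to translate that into the colon identity $\HS_1(I(G)^{k+1}):I(G)=\HS_1(I(G)^{k})$ for all $k\geq 1$, get the inclusion $\supseteq$ from Corollary~\ref{Cor:HSRI(G)}, and for $\subseteq$ use $\HS_1\subseteq\HS_0$ together with \cite[Lemma~2.12]{MMV} to place a monomial $u\in\HS_1(I(G)^{k+1}):I(G)$ inside $I(G)^k$, then try to land it in the decomposition $\Lin_1(I(G)^{k})+I(G)^{\langle k+1\rangle}$ of Theorem~\ref{Thm:HS1-I(G)k}. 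Your remarks on $i=0$ (MMV) and on the obstruction for $i\geq 2$ (no structure theorem for $\HS_i(I(G)^k)$) are also exactly the paper's position; indeed the paper exhibits a six-vertex graph whose edge ideal fails the second homological strong persistence property, so for $i\geq 2$ a different mechanism would be needed.

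However, your proposal has a genuine gap precisely where you flag trouble. The case split you set up is incomplete: you handle the situation where $u=e_1\cdots e_k w$ with the $e_j$ not all equal \emph{and} $w$ divisible by an edge, and you call the complement ``almost a pure power'', but the complement also contains, for instance, $u=e_1\cdots e_k f$ with distinct $e_j$ and $f\notin I(G)$, which is not close to a pure power; whether such a $u$ lies in $\Lin_1(I(G)^k)$ depends on the interaction between the variables of $f$ and the edges $e_j$. The paper isolates the right dichotomy via a condition $(*)$: either some variable $x_p$ of $f$ admits a swap $x_p(e_j/x_q)\in I(G)$, in which case $u\in\Lin_1(I(G)^k)$ directly, or $(*)$ holds, and then one argues toward a contradiction. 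For that contradiction the paper does not adapt Theorem~\ref{Thm:Ratliff<>} as you suggest --- that theorem colons only against $I(G)^{\langle k+1\rangle}$ and, as you yourself note, the colon does not distribute over the sum with $\Lin_1$ --- but instead proves two dedicated lemmas (Lemma~\ref{Lem:ee'} and Lemma~\ref{Lem:ee'1}) showing that under $(*)$ one has $u e_i\notin I(G)^{\langle k+2\rangle}$ and $u e_i\notin\Lin_1(I(G)^{k+1})$ for any factor $e_i$ of $u$; combined with Theorem~\ref{Thm:HS1-I(G)k} this forces $u e_i\notin\HS_1(I(G)^{k+1})$, contradicting $u\in\HS_1(I(G)^{k+1}):I(G)$. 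Your proposed variable chase through $ue=e''v$ is also shakier than it looks: membership in $I(G)\cdot\HS_1(I(G)^k)$ only gives that $ue$ is \emph{divisible} by some $e''v$, not equal to it, so the cancellation you envision does not come for free. In short, you have the correct skeleton but the combinatorial nucleus --- condition $(*)$ and the two exclusion lemmas --- is missing, and that is where the actual proof lives.
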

	
	In support of this conjecture we have
	\begin{Theorem}\label{Thm:I(G)-Strong-Persistence}
		Let $G$ be a finite simple graph. Then, $I(G)$ satisfies the $0$th and the first homological strong persistence properties. In particular, $I(G)$ satisfies the $0$th and the first homological persistence properties.
	\end{Theorem}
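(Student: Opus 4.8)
The plan is to reduce everything to a colon‑ideal identity via Theorem~\ref{Thm:HomStrongPer}, according to which $I(G)$ enjoys the $i$th homological strong persistence property if and only if $\HS_i(I(G)^{k+1}):I(G)=\HS_i(I(G)^k)$ for all $k\ge1$. So I would verify this identity for $i=0$ and $i=1$; the ``in particular'' clause would then be automatic from Proposition~\ref{Prop:ithSPP}. One may assume $E(G)\ne\emptyset$, since otherwise all the ideals involved vanish. The case $i=0$ is immediate: $\HS_0(J)=J$ for every monomial ideal $J$, so the identity reads $I(G)^{k+1}:I(G)=I(G)^k$, which is \cite[Lemma~2.12]{MMV}.

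All the content is in the case $i=1$, i.e.\ in proving $\HS_1(I(G)^{k+1}):I(G)=\HS_1(I(G)^k)$. The inclusion ``$\supseteq$'' is free: by Corollary~\ref{Cor:HSRI(G)} one has $I(G)\cdot\HS_1(I(G)^k)=\HS_1(I(G)^{k+1})$, so $\HS_1(I(G)^k)\subseteq\HS_1(I(G)^{k+1}):I(G)$. For ``$\subseteq$'', I would take a monomial $u$ with $u\cdot I(G)\subseteq\HS_1(I(G)^{k+1})$ and first extract structural consequences. By Lemma~\ref{Lem:HS1}, $\HS_1(I(G)^{k+1})$ consists of monomials of degree $\ge2k+3$ and is contained in $I(G)^{k+1}$; hence $u\in I(G)^{k+1}:I(G)=I(G)^k$ (again \cite[Lemma~2.12]{MMV}), so I may write $u=e_1\cdots e_kg$ with $e_i\in\mathcal{G}(I(G))$ and $g$ a monomial, and comparing $\deg(u\cdot e)=\deg u+2$ with the generating degree $2k+3$ forces $\deg g\ge1$.

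It then remains to show $u\in\HS_1(I(G)^k)=\Lin_1(I(G)^k)+I(G)^{\langle k+1\rangle}$, using Theorem~\ref{Thm:HS1-I(G)k}, and here I would argue by cases on the combinatorial shape of $u$. If $u$ is divisible by a product of $k+1$ elements of $\mathcal{G}(I(G))$ that are not all equal, then $u\in I(G)^{\langle k+1\rangle}$ and we are done. If some variable $x_p\mid g$ is adjacent in $G$ to a vertex of some $e_i$ with $p\notin V(e_i)$, then $e_1\cdots e_kx_p\in\Lin_1(I(G)^k)$ by Proposition~\ref{Prop:Lin1I(G)} and it divides $u$, so again we are done. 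In every remaining configuration --- the salient one being when $u$ is a pure power $e^kg$ with $g$ carrying neither an edge nor such a ``neighbour'' variable --- I would show the hypothesis $u\cdot I(G)\subseteq\HS_1(I(G)^{k+1})$ cannot hold: applying it to a carefully chosen edge (for example $e$ itself) and decomposing the monomial $u\cdot e$, the explicit description of $\HS_1(I(G)^{k+1})$ from Theorem~\ref{Thm:HS1-I(G)k} and Proposition~\ref{Prop:Lin1I(G)} would force $g$ to contain an edge or a neighbour variable, contradicting the assumption defining this case. This exhaustion of the remaining configurations is where I expect the main difficulty: since the decomposition of a monomial of $I(G)^k$ into $k$ edges is far from unique, ruling them all out requires a delicate interplay between the colon identity $I(G)^{\langle k+1\rangle}:I(G)=I(G)^{\langle k\rangle}$ of Theorem~\ref{Thm:Ratliff<>} and the parity gap between the linear degree $2k+1$ and the degree $2k+2$ of the non-pure part of $\HS_1(I(G)^k)$ --- bookkeeping that closely parallels the delicate case analyses in the proofs of Theorems~\ref{Thm:HS1-I(G)k} and~\ref{Thm:Ratliff<>}.
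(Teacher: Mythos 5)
Your reduction to the colon identity $\HS_1(I(G)^{k+1}):I(G)=\HS_1(I(G)^k)$ via Theorem~\ref{Thm:HomStrongPer}, the dispatch of $i=0$ by \cite[Lemma~2.12]{MMV}, the ``$\supseteq$'' inclusion from Corollary~\ref{Cor:HSRI(G)}, and the first two easy subcases (either $u\in I(G)^{\langle k+1\rangle}$, or a ``neighbour'' variable in $g$ puts a factor of $u$ in $\Lin_1(I(G)^k)$) all match the paper's argument. But the heart of the proof is precisely the ``remaining configuration,'' and there your proposal has a genuine gap in two respects.

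First, you mischaracterize the remaining case: you single out the pure power $u=e^kg$, but once one writes $u=e_1\cdots e_\ell f$ with $\ell\ge k$ maximal (so $f\notin I(G)$) and assumes no neighbour variable exists (condition $(*)$), the residual case is $\ell=k$ with the $e_i$ entirely arbitrary --- they need not all be equal, yet $u\notin I(G)^{\langle k+1\rangle}$ because $f\notin I(G)$. In fact the all-equal case $u=e^\ell f$ with $\ell>k$ is dispatched quickly by the paper (Case~1: $e_1$ is the only edge dividing $ue_1$, so $ue_1$ lies in neither summand of $\HS_1(I(G)^{k+1})$); the harder case $\ell=k$ with general $e_i$ is what the paper's two technical Lemmas~\ref{Lem:ee'} and~\ref{Lem:ee'1} are designed for, and you neither cite nor reconstruct them. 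Second, your suggestion that the argument rests on the colon identity $I(G)^{\langle k+1\rangle}:I(G)=I(G)^{\langle k\rangle}$ of Theorem~\ref{Thm:Ratliff<>} is a misdirection: the paper's proof of the present theorem never invokes it, and it could not be used directly, since $\HS_1(I(G)^{k+1})$ is a \emph{sum} $\Lin_1(I(G)^{k+1})+I(G)^{\langle k+2\rangle}$ and colon does not distribute over sums --- knowing $I(G)^{\langle k+2\rangle}:I(G)$ tells you nothing about the colon of the sum. What actually does the work is showing, under $(*)$, that $ue_i$ avoids both summands simultaneously, which is exactly the content of Lemmas~\ref{Lem:ee'} and~\ref{Lem:ee'1}; that is the missing piece of your argument.
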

	
	For the proof of the theorem, we need the following two preliminary lemmas.
	\begin{Lemma}\label{Lem:ee'}
		Let $G$ be a finite simple graph. Let $e_1,\dots,e_k\in\mathcal{G}(I(G))$ and $f\in S$ be a monomial such that $f\notin I(G)$ and such that the following condition is satisfied.
		\begin{enumerate}
			\item[$(*)$] For any variable $x_p$ dividing $f$, any variable $x_q\ne x_p$ dividing some $e_j$, we have $x_p(e_j/x_q)\notin I(G)$.
		\end{enumerate}
		Then, for all $i$, we have $e_i\cdot(e_1\cdots e_kf)\notin I(G)^{\langle k+2\rangle}$. 
	\end{Lemma}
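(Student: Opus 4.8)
The plan is to deduce the lemma from the following cleaner statement, proved by induction on $r$: $(\star)$ \emph{for any edges $h_1,\dots,h_r\in\mathcal G(I(G))$ and any monomial $f\notin I(G)$ such that $x_p(h_j/x_q)\notin I(G)$ whenever $x_p$ divides $f$, $x_q$ divides some $h_j$ and $x_q\neq x_p$, one has $h_1\cdots h_r\cdot f\notin I(G)^{\langle r+1\rangle}$.} The lemma is precisely the instance $r=k+1$ of $(\star)$ applied to the multiset $\{e_i,e_1,\dots,e_k\}$, since condition $(*)$ depends only on the underlying set of edges, which is unaffected by repeating $e_i$. The base case $r=0$ of $(\star)$ is the hypothesis $f\notin I(G)=I(G)^{\langle1\rangle}$.

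For the inductive step, suppose $w:=h_1\cdots h_r\cdot f\in I(G)^{\langle r+1\rangle}$; by definition this means that $g_1\cdots g_{r+1}$ divides $w$ for some $g_1,\dots,g_{r+1}\in\mathcal G(I(G))$ not all equal. Write $w=m\cdot f$ with $m=h_1\cdots h_r$, and fix an injective assignment of the $2(r+1)$ ``ends'' of the edges $g_\ell$ to the variable ``slots'' of $w$; here $w$ has $2r$ slots in $m$ (two per factor $h_j$, partitioned according to which $h_j$ they belong to) and $\deg f$ slots in $f$. Since $2(r+1)>2r$, at least two of the occupied slots are $f$-slots, and no single $g_\ell$ can occupy two $f$-slots, for then $g_\ell$ would divide $f$ (impossible, $f\notin I(G)$) or be a square (impossible, $G$ simple). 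Hence at least two of the $g_\ell$ are \emph{mixed}, using exactly one $f$-slot and one $m$-slot.

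The crux, and the only place where the hypothesis on $f$ is used, is a rigidity statement: if a mixed edge $g_\ell=x_px_t$ has its $x_t$-end in an $m$-slot belonging to the factor $h_j=x_tx_s$ (so $x_p$ divides $f$), then, were $x_s\neq x_p$, condition $(*)$ with $x_q=x_s$ would give $x_p(h_j/x_s)=x_px_t\notin I(G)$, contradicting $x_px_t=g_\ell\in I(G)$; hence $x_s=x_p$ and $g_\ell=h_j$. Thus every mixed edge \emph{equals} the factor $h_j$ whose slot it borrows. Now fix one mixed edge $g_{\ell_0}=h_{j_0}$ and put $P':=(g_1\cdots g_{r+1})/g_{\ell_0}$ and $w':=w/h_{j_0}=\bigl(\prod_{j\neq j_0}h_j\bigr)f$. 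Then $P'$ is a product of $r$ edges dividing $w'$, while $w'$ is a product of $r-1$ edges times $f$ still satisfying the hypothesis of $(\star)$ (condition $(*)$ is inherited by any subset of the edges); so by induction $w'\notin I(G)^{\langle r\rangle}$, forcing all $r$ edges of $P'$ to coincide, say $P'=e'^{\,r}$. Since the $g_\ell$ are not all equal, $e'\neq g_{\ell_0}$. From $e'^{\,r}g_{\ell_0}\mid w$ we get $e'^{\,r}\mid w'$; writing $e'=x_cx_d$ with $c\neq d$, the exponent of $x_c$ in the product of the $r-1$ edges in $w'$ is at most $r-1<r$, so $x_c$ divides $f$, and likewise $x_d$, whence $e'=x_cx_d$ divides $f$, contradicting $f\notin I(G)$. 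This closes the induction.

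The main obstacle is purely organizational: setting up the slot/assignment bookkeeping so that ``$g_1\cdots g_{r+1}$ divides $w$'' is translated into an injective assignment of edge-ends to the $2r$ $m$-slots and the $\deg f$ $f$-slots, and extracting from $2(r+1)>2r$ the existence of at least two mixed edges. Once the rigidity ``mixed edge $=$ borrowed factor $h_j$'' is established from $(*)$, the rest is forced: removing $h_{j_0}$ reduces everything to the inductive statement with $r-1$ edges, and a single degree comparison produces the contradiction, with no further case analysis.
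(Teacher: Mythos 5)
Your proof is correct, and the crucial computation --- applying $(*)$ to an edge that is ``mixed'' between $f$ and some factor $h_j$ --- is the same one that drives the paper's argument. The packaging, however, is genuinely different. The paper argues in one shot: it relabels so that a maximal subcollection of the $e_t'$ coincides with a matching subcollection of the factors $e_s$, cancels those, then uses a pure degree count on the remaining equation to produce a variable $x_p$ dividing $f$ together with an unmatched edge $e_{k+2}'=x_px_q$; the other endpoint $x_q$ must then fall into some unmatched $e_s=x_qx_r$, maximality of the matching forces $e_s\ne e_{k+2}'$, hence $r\ne p$, and $(*)$ gives the contradiction directly. You instead isolate the cleaner auxiliary statement $(\star)$ and prove it by induction on the number of edge factors: the rigidity observation shows that a mixed edge $g_{\ell_0}$ must literally equal the factor $h_{j_0}$ whose slot it borrows, you strike that matched pair from both sides, and the inductive hypothesis then forces the leftover $g_\ell$ to be a repeated edge $e'$, which a final degree count places inside $f$. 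What the paper's maximality delivers in one stroke (``unmatched $e_j'$ never equals an unmatched $e_s$'') is exactly what your rigidity lemma recovers one pair at a time. Your version has the mild benefit of removing the duplicated $e_i$ from the statement and of exhibiting a reusable rigidity fact, at the cost of the slot-assignment bookkeeping and an extra layer of induction; note also that the pigeonhole producing ``at least two'' mixed edges is more than you need --- a single mixed edge already suffices to launch the inductive step.
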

	\begin{proof}
		Suppose for a contradiction that $e_i\cdot(e_1\cdots e_kf)\in I(G)^{\langle k+2\rangle}$. Up to a relabeling, we may assume $i=k$, and for convenience, we put $e_{k+1}=e_k$. Then, we can write $e_1\cdots e_{k+1}f=e_1'\cdots e_{k+2}'g$ with $e_{i}'\in\mathcal{G}(I(G))$, $|\{e_1',\dots,e_{k+2}'\}|>1$, and $g\in S$ is a suitable monomial. We may assume, up to a relabeling, that $e_1'=e_{j_1},\dots,e_{\ell}'=e_{j_\ell}$ for some $\ell\ge1$, distinct integers $j_1,\dots,j_\ell$, and that $e_j'\ne e_i$ for all $\ell+1\le j\le k+2$ and all $i\in\{1,\dots,k+1\}\setminus\{j_1,\dots,j_\ell\}$. Then, from the equation $e_1\cdots e_{k+1}f=e_{1}'\cdots e_{k+2}'g$, canceling the monomials $e_1'=e_{j_1},\dots,e_{\ell}'=e_{j_\ell}$, we obtain that
		\begin{equation}\label{eq:ee'}
			(\prod_{\substack{s=1,\dots,k+1\\ s\ne j_1,\dots,j_\ell}}e_s)f\ =\ e_{\ell+1}'\cdots e_{k+2}'g.
		\end{equation}
		We claim that $\ell<k+1$. Otherwise, $f=e_{k+2}'g\in I(G)$ against the assumption.
		
		Since
		$$
		\deg(\prod_{\substack{s=1,\dots,k+1\\ s\ne j_1,\dots,j_\ell}}e_s)=2(k-\ell)+2\ <\ 2(k-\ell)+4=\deg(e_{\ell+1}'\cdots e_{k+2}'),
		$$
		it follows that some variable $x_p$ dividing $f$ divides $e_{j}'$ for some $j$, say $j=k+2$. Now, let $e_{k+2}'=x_px_q$. Since $f\notin I(G)$, then $x_q$ does not divide $f$. Hence, equation (\ref{eq:ee'}) implies that $x_q$ divides some $e_s$. Let $e_s=x_qx_r$. Since $e_s\ne e_{k+2}'$, we have $r\ne p$. But then we would have $x_p(e_s/x_r)=e_{k+2}'\in I(G)$ against the condition $(*)$. This contradiction shows that $e_i(e_1\cdots e_k)f\notin I(G)^{\langle k+2\rangle}$.
	\end{proof}
	
	\begin{Lemma}\label{Lem:ee'1}
		Let $G$ be a finite simple graph. Let $e_1,\dots,e_k\in\mathcal{G}(I(G))$ and $f\in S$ be a monomial such that $f\notin I(G)$ and such that the following condition  is satisfied.
		\begin{enumerate}
			\item[$(*)$] For any variable $x_p$ dividing $f$, any variable $x_q\ne x_p$ dividing some $e_j$, we have $x_p(e_j/x_q)\notin I(G)$.
		\end{enumerate}
		Then, for all $i$, we have $e_i\cdot(e_1\cdots e_kf)\notin\Lin_1(I(G)^{k+1})$. 
	\end{Lemma}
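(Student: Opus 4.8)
The plan is to argue by contradiction, following the strategy of Lemma~\ref{Lem:ee'}. After a relabeling we may assume $i=k$; set $e_{k+1}:=e_k$ and $M:=e_i\cdot(e_1\cdots e_kf)=e_1\cdots e_{k+1}f$, and suppose $M\in\Lin_1(I(G)^{k+1})$. By Proposition~\ref{Prop:Lin1I(G)}, $\Lin_1(I(G)^{k+1})$ is minimally generated by the monomials $E_1\cdots E_{k+1}x_p$ with $E_j\in\mathcal{G}(I(G))$ such that, after relabeling the $E_j$, one has $E_1=x_qx_b$ with $x_px_b\in I(G)$ and $p\ne q$; note that then $p,q,b$ are pairwise distinct. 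Hence some such generator $w=E_1\cdots E_{k+1}x_p$ divides $M$.

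The key point is that $w\,x_b=(x_qx_b)(x_px_b)\,E_2\cdots E_{k+1}$ exhibits $w\,x_b$ as a product of $k+2$ elements of $\mathcal{G}(I(G))$ that are not all equal (since $x_qx_b\ne x_px_b$), so $w\,x_b\in I(G)^{\langle k+2\rangle}$; as $w\,x_b$ divides $M\,x_b=e_1\cdots e_k\,e_i\,(x_bf)$, we obtain $M\,x_b\in I(G)^{\langle k+2\rangle}$. Now I would invoke Lemma~\ref{Lem:ee'} with $f$ replaced by $x_bf$: if $x_bf\notin I(G)$ and condition $(*)$ still holds with $x_bf$ in place of $f$, then Lemma~\ref{Lem:ee'} gives $e_i\bigl(e_1\cdots e_k(x_bf)\bigr)=M\,x_b\notin I(G)^{\langle k+2\rangle}$, a contradiction. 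This immediately settles the case $b\in\supp(f)$, because then $\supp(x_bf)=\supp(f)$ and both $x_bf\notin I(G)$ and $(*)$ are inherited from $f$.

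It remains to exclude the case $b\notin\supp(f)$ together with $x_bf\in I(G)$, or with $(*)$ failing for $x_bf$. For this I would first translate the hypotheses: for an edge ideal, ``$f\notin I(G)$ and $(*)$'' is equivalent to requiring that $\supp(f)$ be an independent set of $G$ and that every vertex of $\supp(f)$ adjacent to a vertex of $W:=\bigcup_{j=1}^{k}V(e_j)$ be adjacent to it only through one of the edges $e_1,\dots,e_k$ themselves. Using this and $w\mid M$ (so that $\supp(w)\subseteq\supp(M)=W\cup\supp(f)$), one checks that $w$ is supported entirely on $W$ and, in particular, that $b,q,p\in W$ and that $b$ is an endpoint of some $e_{j_0}$. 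Both remaining possibilities then force $b$ to be adjacent to a vertex $v\in W\cup\supp(f)$, $v\ne b$, which is not an endpoint of an $e_j$ containing $b$; and each such configuration leads to a contradiction, either directly from $(*)$, or from a vertex-by-vertex multiplicity count comparing the $W$-part of $w$ (of degree $2(k+1)+1$) with the $W$-part of $M$ (of degree $2(k+1)$ coming from $e_1\cdots e_{k+1}$, plus the restriction of $f$ to the variables of $W$, which must therefore be nontrivial). The hard part is precisely this last bookkeeping: the vertex $b$ is already ``used up'' by the factor $E_1x_p$ of $w$, so the remaining edges $E_2,\dots,E_{k+1}$ cannot absorb the $W$-multiplicities available in $M$ without overusing some other vertex — making this rigorous, in all of the subcases, is where the real work lies.
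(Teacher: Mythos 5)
Your key idea is neat: write $w = E_1\cdots E_{k+1}x_p$ with $E_1 = x_qx_b$ and $x_px_b\in I(G)$, and observe that multiplying by $x_b$ converts the non-pure factor $E_1x_p$ into two distinct edges $(x_qx_b)(x_px_b)$, so that $wx_b\in I(G)^{\langle k+2\rangle}$ and hence $Mx_b\in I(G)^{\langle k+2\rangle}$, inviting a contradiction with Lemma~\ref{Lem:ee'} applied to $x_bf$. When $b\in\supp(f)$ the hypotheses of Lemma~\ref{Lem:ee'} do pass to $x_bf$ unchanged and the argument closes. But when $b\notin\supp(f)$ the monomial $x_bf$ may well lie in $I(G)$, or condition $(*)$ may fail for $x_bf$ (with $x_b$ as the offending variable), and you leave this entire case as a sketch while yourself acknowledging that the real work lies there. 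Moreover, the intermediate claims in that sketch are not justified: you assert that $w$ is supported on $W$ and that $p,q\in W$, but from $w\mid M$ you only get $\supp(w)\subseteq W\cup\supp(f)$, and $p$ or $q$ could a priori lie in $\supp(f)$. So this is a genuine gap, not a routine bookkeeping detail to be filled in.

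For comparison, the paper does not route through Lemma~\ref{Lem:ee'} at all. Starting from $e_1\cdots e_{k+1}f = E_1\cdots E_{k+1}x_pg$, it splits according to whether $E_1$ coincides with some $e_j$ after relabeling (and in the complementary case it is also allowed to swap $E_1=x_qx_b$ for $x_px_b$, which matches the two edges you produce); in each case it cancels the matched edges, compares degrees to exhibit a variable $x_r$ that divides both $f$ and some remaining $E_h=x_rx_s$, and then shows $x_s$ must divide some $e_d=x_sx_t$ with $t\ne r$, so $x_r(e_d/x_t)\in I(G)$ contradicts $(*)$. Filling your remaining case would essentially require reproducing this direct analysis (you would need exactly this kind of variable-chasing to rule out $x_bf\in I(G)$ and the failure of $(*)$ for $x_bf$), so the reduction does not, as it stands, save work. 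Either complete the $b\notin\supp(f)$ case with the same level of care as Lemma~\ref{Lem:ee'}'s proof, or argue directly from the factorization.
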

	\begin{proof}
		Suppose for a contradiction that $u=e_i\cdot(e_1\cdots e_kf)\in\Lin_1(I(G)^{k+1})$. Up to a relabeling, we may assume $i=k$, and for convenience, we put $e_{k+1}=e_k$. Then, we can write $e_1\cdots e_{k+1}f=e_1'\cdots e_{k+1}'x_pg$ with $e_{i}'\in\mathcal{G}(I(G))$, $x_p(e_1'/x_q)\in I(G)$ for some $x_p\ne x_q$, and $g\in S$ is a suitable monomial. We distinguish two cases now.\smallskip
		
		\textsc{Case 1.} Suppose, up to a relabeling, that $e_{1}'=e_{j_1},\dots,e_{\ell}'=e_{j_\ell}$ for some integer $\ell$, distinct integers $j_1,\dots,j_\ell$, and $e_j'\ne e_i$ for all $\ell+1\le j\le k+1$ and all $i\in\{1,\dots,k+1\}\setminus\{j_1,\dots,j_{\ell}\}$. We have $\ell<k+1$, otherwise $f=x_pg$ and then the condition $(*)$ would be violated, because  $x_p(e_{j_1}/x_q)=x_p(e_1'/x_q)\in I(G)$ with $x_p\ne x_q$ and $x_p$ divides $f$. Hence $\ell<k+1$. Let $v=u/(e_{j_1}\cdots e_{j_\ell}f)$. If $v=e_{\ell+1}'\cdots e_{k+1}'$, then $e_{1}\cdots e_{k+1}=e_1'\cdots e_{k+1}'$. Hence, $f=x_pg$ and so $x_p$ divides $f$. But then the condition $(*)$ would be violated. Therefore $v\ne e_{\ell+1}'\cdots e_{k+1}'$. Since $vf=e_{\ell+1}'\cdots e_{k+1}'x_pg$, it follows that we can find a variable $x_r$ that divides $f$ and $e_j'$ for some $j$, say $j=\ell+1$. Write $e_{\ell+1}'=x_rx_s$. Since $f\notin I(G)$, it follows that $x_s$ does not divide $f$. Hence $x_s$ divides $v$, and so $x_s$ divides some $e_h$ with $h\in\{1,\dots,k+1\}\setminus\{j_1,\dots,j_\ell\}$. Let $e_h=x_sx_t$. Since $e_h\ne e_{\ell+1}'$, we have $x_t\ne x_r$. But then the condition $(*)$ would be violated because $x_r(e_h/x_t)\in I(G)$ with $x_r\ne x_t$ and $x_r$ divides $f$. This contradiction shows that in this case $e_i\cdot(e_1\cdots e_kf)$ does not belong to $\Lin_1(I(G)^{k+1})$.\smallskip
		
		\textsc{Case 2.} Suppose now that $e_1'\ne e_j$ for all $j=1,\dots,k+1$. We may also assume that $x_p(e_1'/x_q)\ne e_j$ for all $j=1,\dots,k+1$. Otherwise, we may consider the equation $ue_1=e_1''e_2'\cdots e_{k+1}'x_qg$ with $e_1''=x_p(e_1'/x_q)$ and argue as in Case 1. Up to a relabeling, we have that $e_2'=e_{j_2},\dots,e_\ell'=e_{j_\ell}$ for some integer $\ell\le k+1$, distinct integers $j_1,\dots,j_\ell$, and $e_j'\ne e_i$ for all $j=\ell+1,\dots,k+1$ and all $i\in\{1,\dots,k+1\}\setminus\{j_1,\dots,j_\ell\}$. Let $w=u/(e_{j_2}\cdots e_{j_\ell}f)$. We obtain that
		\begin{equation}\label{eq:ee'1}
			wf\ =\ (\prod_{\substack{s=1,\dots,k+1\\ s\ne j_2,\dots,j_\ell}}e_s)f\ =\ e_1'e_{\ell+1}'\cdots e_{k+1}'x_pg.
		\end{equation}
		We may assume that $w\ne e_1'e_{\ell+1}'\cdots e_{k+1}'$. If otherwise $w=e_1'e_{\ell+1}'\cdots e_{k+1}'$, we replace $e_1'$ with $e_1''=x_p(e_1'/x_q)$ and consider the equation $wf=e_1''e_{\ell+1}'\cdots e_{k+1}'x_qg$. Now, having that $w\ne e_1'e_{\ell+1}'\cdots e_{k+1}'$, and since these two monomials have the same degree $2(k-\ell)+4$, it follows from equation (\ref{eq:ee'1}) that some variable $x_r$ dividing $f$ divides $e_h'$ for some $h\in\{1,\ell+1,\ell+2,\dots,k+1\}$. Let $e_h'=x_rx_s$. Since $f\notin I(G)$, it follows that $x_s$ does not divide $f$. Hence, again from (\ref{eq:ee'1}) it follows that $x_s$ divides $e_d$ for some $e_d$ appearing in $w$. Let $e_d=x_sx_t$. Since $e_d\ne e_{h}'$ we have $x_t\ne x_r$. Finally, condition $(*)$ is violated because $x_r$ divides $f$, $x_r\ne x_t$ and $x_r(e_d/x_t)\in I(G)$. This contradiction concludes the proof.
	\end{proof}
	
	Finally, we are in the position to prove Theorem \ref{Thm:I(G)-Strong-Persistence}.
	\begin{proof}[Proof of Theorem \ref{Thm:I(G)-Strong-Persistence}]
		Since $\HS_0(I(G)^k)=I(G)^k$ for all $k$, \cite[Lemma 2.12]{MMV} implies that $I(G)$ satisfies the $0$th homological strong persistence property.\smallskip
		
		Now, we prove that $I(G)$ satisfies the first homological strong persistence property. From Theorem \ref{Thm:HS1-I(G)k} we have $\HS_1(I(G)^{k+1})=I(G)\cdot\HS_1(I(G)^k)$ for all $k\ge1$. Hence $\HS_1(I(G)^k)\subseteq\HS_1(I(G)^{k+1}):I(G)$. Thus, it remains to prove the opposite inclusion. Let $u\in\HS_{1}(I(G)^{k+1}):I(G)$ be a monomial. By \cite[Lemma 2.12]{MMV}, since $\HS_1(I(G)^{k+1})\subseteq I(G)^{k+1}$, we obtain that
		$$
		u\in\HS_1(I(G)^{k+1}):I(G)\ \subseteq\ I(G)^{k+1}:I(G)\ =\ I(G)^{k},
		$$
		So, we can write $u=e_1\cdots e_\ell f$, where $\ell\ge k$, $e_i\in\mathcal{G}(I(G))$ for all $i$ and $f\notin I(G)$.
		
		If for some variable $x_p$ dividing $f$, we can find a variable $x_q$ and an integer $j$ such that $x_p(e_j/x_q)\ne e_j$ and $x_p(e_j/x_q)\in I(G)$, then it follows that $$e_1\cdots e_\ell x_p\in\Lin_1(I(G)^{\ell})\subseteq\Lin_1(I(G)^k)\subseteq\HS_1(I(G)^k)$$ by Theorem \ref{Thm:HS1-I(G)k}. Hence $u\in\HS_1(I(G)^k)$ too, as desired.
		
		Suppose now that the following condition $(*)$ is satisfied: For any variable $x_p$ dividing $f$, any variable $x_q\ne x_p$ dividing some $e_j$, we have $x_p(e_j/x_q)\notin I(G)$. We distinguish two cases now.\smallskip
		
		\textsc{Case 1.} Let $\ell>k$. We claim that $e_i\ne e_j$ for some $i\ne j$. Once we acquire this claim, we have $u\in I(G)^{\langle\ell\rangle}\subseteq I(G)^{\langle k+1\rangle}\subseteq\HS_1(I(G)^{k})$ by Theorem \ref{Thm:HS1-I(G)k}, as desired. Indeed, suppose for a contradiction that $e_1=\dots=e_\ell$. Then $u=e_1^\ell f$. Since $u\in\HS_1(I(G)^{k+1}):I(G)$, it follows that $ue_1=e_1^{\ell+1}f\in\HS_1(I(G)^{k+1})$. From Theorem~\ref{Thm:HS1-I(G)k}, we have $\HS_1(I(G)^{k+1})=\Lin_1(I(G)^{k+1})+I(G)^{\langle k+2\rangle}$, so either $ue_1\in\Lin_1(I(G)^{k+1})$ or $ue_1\in I(G)^{\langle k+2\rangle}$. By condition $(*)$ and the fact that $f\notin I(G)$, it follows that the only monomial $e\in\mathcal{G}(I(G))$ that divides $ue_1$ is $e=e_1$. Hence, the condition $(*)$ implies that $ue_1\notin\Lin_1(I(G)^{k+1})$. So, we should have $ue_1\in I(G)^{\langle k+2\rangle}$, but this is not possible because $e_1$ is the only monomial generator of $I(G)$ that divides $ue_1$. Hence $ue_1\notin\HS_1(I(G)^{k+1})$, which is a contradiction.\smallskip
		
		\textsc{Case 2.} Now, let $\ell=k$. Then $u=e_1\cdots e_kf$, with $f\notin I(G)$ and satisfying the condition $(*)$. It follows from Lemmas \ref{Lem:ee'} and \ref{Lem:ee'1} that $ue_1\notin I(G)^{\langle k+2\rangle}$ and $ue_1\notin\Lin_1(I(G)^{k+1})$. By Theorem \ref{Thm:HS1-I(G)k} we have $\HS_1(I(G)^{k+1})=\Lin_1(I(G)^{k+1})+I(G)^{\langle k+2\rangle}$. Hence $ue_1\notin\HS_1(I(G)^{k+1})$, which is absurd. Therefore, the condition $(*)$ can not hold and this concludes the proof.
	\end{proof}
	
	It would be of great interest to describe explicitly the sets $\Ass\,\HS_1(I(G)^k)$ for all $k\ge1$, as done in the case of $\Ass\,\HS_0(I(G)^k)=\Ass\, I(G)^k$ by Lam and Trung \cite{LT}.\smallskip
	
	As an immediate consequence of Theorem \ref{Thm:I(G)-Strong-Persistence} and Proposition \ref{Prop:ithSPP} we have
	\begin{Corollary}
		Let $G$ be a finite simple graph. Then, the functions $\height\,\HS_0(I(G)^k)$ and $\height\,\HS_1(I(G)^k)$ are decreasing.
	\end{Corollary}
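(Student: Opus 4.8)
The plan is to read the claim off directly from Theorem~\ref{Thm:I(G)-Strong-Persistence}, Proposition~\ref{Prop:ithSPP}, and the elementary fact that the height of a monomial ideal is computed on its associated primes. Concretely: strong persistence yields persistence, persistence yields an ascending chain of sets of associated primes, and such an ascending chain forces the heights to be non-increasing.

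First I would invoke Theorem~\ref{Thm:I(G)-Strong-Persistence} to conclude that $I(G)$ satisfies the $0$th and the first homological strong persistence properties, and then Proposition~\ref{Prop:ithSPP} to deduce the $0$th and first homological persistence properties; that is, for $i\in\{0,1\}$ we have the chains
$$
\Ass\,\HS_i(I(G))\,\subseteq\,\Ass\,\HS_i(I(G)^2)\,\subseteq\,\Ass\,\HS_i(I(G)^3)\,\subseteq\,\cdots .
$$
Next I would recall that for a monomial ideal $J\subset S$ one has $\height\,J=\min\{\height\,P:\ P\in\Min\,J\}$ and $\Min\,J\subseteq\Ass\,J$. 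Fix $i\in\{0,1\}$ and $k\ge1$, and choose $P\in\Min\,\HS_i(I(G)^k)$ with $\height\,P=\height\,\HS_i(I(G)^k)$. Since $P\in\Ass\,\HS_i(I(G)^k)\subseteq\Ass\,\HS_i(I(G)^{k+1})$, the prime $P$ contains $\HS_i(I(G)^{k+1})$, hence contains some $Q\in\Min\,\HS_i(I(G)^{k+1})$. Therefore
$$
\height\,\HS_i(I(G)^{k+1})\ \le\ \height\,Q\ \le\ \height\,P\ =\ \height\,\HS_i(I(G)^k),
$$
and since $k\ge1$ was arbitrary, both functions $k\mapsto\height\,\HS_0(I(G)^k)$ and $k\mapsto\height\,\HS_1(I(G)^k)$ are (weakly) decreasing.

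I do not anticipate any real obstacle: once Theorem~\ref{Thm:I(G)-Strong-Persistence} is available, the argument is purely formal. The only points worth stating carefully are that the persistence chain is indexed beginning at $k=1$ (so the comparison $\Ass\,\HS_i(I(G)^k)\subseteq\Ass\,\HS_i(I(G)^{k+1})$ holds for every $k\ge1$), and that every associated prime of an ideal contains a minimal prime of that ideal, which is standard.
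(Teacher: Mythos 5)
Your proposal is correct and supplies exactly the argument the paper leaves implicit: the corollary is stated there as ``an immediate consequence'' of Theorem~\ref{Thm:I(G)-Strong-Persistence} and Proposition~\ref{Prop:ithSPP}, and what you have written out (strong persistence $\Rightarrow$ persistence $\Rightarrow$ ascending chain of $\Ass$ sets, combined with the standard fact that every associated prime of an ideal contains a minimal prime of that ideal, so the minimum of the heights over minimal primes cannot increase) is precisely the intended deduction.
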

	
	Combining Theorem \ref{Thm:I(G)-Strong-Persistence} with Corollary \ref{Cor:HSRI(G)} and Proposition \ref{prop:Min} we obtain immediately that
	\begin{Corollary}
		Let $G$ be a finite simple graph. Then, for all $k\ge1$,
		$$
		\Min\,\HS_1(I(G)^{k})\ =\ \Min\,\HS_1(I(G)).
		$$
	\end{Corollary}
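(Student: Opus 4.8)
The plan is to obtain this as a direct application of Proposition~\ref{prop:Min} to the ideal $I=I(G)$ with homological index $i=1$. That proposition has two hypotheses: that $I(G)$ satisfy the first homological strong persistence property, and that $\HS_1(I(G)^{k+1})=I(G)\cdot\HS_1(I(G)^k)$ for every $k\ge1$. The first is exactly Theorem~\ref{Thm:I(G)-Strong-Persistence}, and the second is exactly Corollary~\ref{Cor:HSRI(G)} (which itself rests on the explicit description of $\HS_1(I(G)^k)$ in Theorem~\ref{Thm:HS1-I(G)k} together with the multiplicativity of both the linear part $\Lin_1$ and the non-pure powers $I(G)^{\langle\,\cdot\,\rangle}$). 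Once both hypotheses are in place, Proposition~\ref{prop:Min} immediately yields $\Min\,\HS_1(I(G)^k)=\Min\,\HS_1(I(G))$ for all $k\ge1$, which is the assertion.

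It is worth recalling the mechanism inside Proposition~\ref{prop:Min}, since it explains why nothing further is needed. The equality $\HS_1(I(G)^k)=I(G)^{k-1}\cdot\HS_1(I(G))$ forces any $P\in\Min\,\HS_1(I(G)^k)$ either to exclude some generator $u\in\mathcal{G}(I(G))\setminus P$, in which case primality gives $\HS_1(I(G))\subseteq P$, or else to contain every generator of $I(G)$, in which case $\HS_1(I(G))\subseteq\HS_1(I(G))\subseteq P$ trivially (this is the $i=1$ instance of the chain $\HS_i(I)\subseteq\dots\subseteq\HS_1(I)$ invoked in the general statement). Either way $P$ contains some $Q\in\Min\,\HS_1(I(G))$, and persistence (Proposition~\ref{Prop:ithSPP}) promotes $Q$ into $\Ass\,\HS_1(I(G)^k)$, forcing $P=Q$ and hence $\Min\,\HS_1(I(G)^k)\subseteq\Min\,\HS_1(I(G))$. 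The reverse containment $\Min\,\HS_1(I(G))\subseteq\Ass\,\HS_1(I(G))\subseteq\Ass\,\HS_1(I(G)^k)$ is again persistence, and combining the two inclusions gives the desired equality.

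Since every ingredient has already been established, there is no genuine obstacle to surmount; the only point deserving attention is that both hypotheses of Proposition~\ref{prop:Min} hold \emph{for all} $k\ge1$ and not merely for $k\gg0$ — and this is guaranteed precisely because Corollary~\ref{Cor:HSRI(G)} delivers the persistence-type identity $\HS_1(I(G)^{k+1})=I(G)\cdot\HS_1(I(G)^k)$ exactly for every $k$, rather than only asymptotically as in the general Theorem~\ref{Thm:resume-HS(R(I))}. Thus the corollary follows by citing Theorem~\ref{Thm:I(G)-Strong-Persistence}, Corollary~\ref{Cor:HSRI(G)}, and Proposition~\ref{prop:Min} in sequence.
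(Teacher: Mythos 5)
Your proof is correct and is exactly the paper's argument: the corollary is presented as an immediate consequence of Theorem~\ref{Thm:I(G)-Strong-Persistence}, Corollary~\ref{Cor:HSRI(G)}, and Proposition~\ref{prop:Min}, precisely the three results you cite. Your added commentary on the mechanism inside Proposition~\ref{prop:Min} accurately recounts its proof but adds nothing beyond what the cited results already deliver.
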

	
	For the higher homological strong persistence properties, Theorem \ref{Thm:I(G)-Strong-Persistence} is not valid. Indeed, consider the edge ideal $I=(x_1x_2,x_1x_3,x_2x_3,x_1x_4,x_2x_5,x_3x_6)$. Using the \textit{Macaulay2} \cite{GDS} package \texttt{HomologicalShiftIdeals} \cite{FPack1} we checked that $(x_1),(x_2),(x_3)\in\Ass\,\HS_2(I)\setminus\Ass\,\HS_2(I^2)$. So, $I$ does not satisfy the second homological (strong) persistence property. Nonetheless, Conjecture \ref{Conj:HS-I(G)} holds for $I$.\smallskip
	
	It is an open question whether we have $\HS_i(I(G)^{k+1}):I(G)=\HS_i(I(G)^k)$ for all $k\gg0$ and all graphs $G$, when $i>1$.
	
	\section{Eventually homological linear powers of $I(G)$}\label{sec4}
	
	For the subsequent discussion we recall a few facts. Given monomials $u,v\in S$, we set $u:v=\lcm(u,v)/v$. A monomial ideal $I\subset S$ has \textit{linear quotients} if there exists an order $u_1,\dots,u_m$ of $\mathcal{G}(I)$ such that the ideals $(u_1,\dots,u_{j-1}):u_j$ are generated by variables for all $j=2,\dots,m$. For all $j$, we put $$\set(u_j)\ =\ \{i\ :\ x_i\in(u_1,\dots,u_{j-1}):u_j\}.$$ If $I\subset S$ is equigenerated and has linear quotients, then it has linear resolution. For a subset $F\subseteq[n]$, we set ${\bf x}_F=\prod_{i\in F}x_i$. In particular, we have ${\bf x}_{\emptyset}=1$. Let $I\subset S$ be a monomial ideal with linear quotients. By \cite[Lemma 1.5]{ET} we have
	$$
	\HS_i(I)\ =\ ({\bf x}_Fu\ :\ u\in\mathcal{G}(I),\ F\subseteq\set(u),\ |F|=i).
	$$
	
	We say that a monomial ideal $I\subset S$ has \textit{linear powers} if $I^k$ has linear resolution for all $k\ge1$. Whereas, we say that $I$ has \textit{homological linear powers} if $\HS_i(I^k)$ has linear resolution for all $i\ge0$ and all $k\ge1$. Finally, we say that $I$ has \textit{eventually homological linear powers} if $\HS_i(I^k)$ has linear resolution for all $i\ge0$ and all $k\gg0$.\smallskip
	
	Recall that a graph $G$ is called \textit{chordal} if it has no induced cycles of length bigger than three. By a result of Fr\"oberg \cite{Froberg88}, it is known that $I(G)$ has linear resolution if and only if $G^c$ is a chordal graph. Here, $G^c$ is the \textit{complementary graph} of $G$, that is the graph with $V(G^c)=V(G)$ and whose edges are the non-edges of $G$.\smallskip
	
	By a famous result of Herzog, Hibi and Zheng \cite{HHZ2004}, (see also \cite[Theorem 1.1]{F-HHZ} or \cite{BDMS} for a short proof), $I(G)$ has linear resolution, if and only if $I(G)$ has linear powers, if and only if, $I(G)^k$ has linear quotients for all $k\ge1$.\smallskip
	
	In view of many computational examples and partial evidence, we are tempted to pose the following conjecture.
	\begin{Conjecture}\label{Conj:HS-I(G)}
		Let $I(G)$ be an edge ideal with linear resolution. Then $I(G)$ has eventually homological linear powers.
	\end{Conjecture}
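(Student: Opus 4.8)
The plan is to reduce Conjecture~\ref{Conj:HS-I(G)} to a regularity bound and then to attack that bound through the linear-quotients structure of the powers of $I(G)$. Since $I(G)$ has a linear resolution, $G^c$ is chordal by Fr\"oberg's theorem, so by Herzog--Hibi--Zheng \cite{HHZ2004} the ideal $I(G)$ has linear powers; in particular $\reg I(G)^k=2k$ and the minimal free resolution of $I(G)^k$ is linear for every $k\ge1$. Hence $\Tor_i^S(K,I(G)^k)_{\bf a}\ne0$ forces $|{\bf a}|=2k+i$, so $\HS_i(I(G)^k)$ is equigenerated in the single degree $2k+i$; consequently $\HS_i(I(G)^k)$ has a linear resolution if and only if $\reg\HS_i(I(G)^k)=2k+i$, and since the ideal is equigenerated in that degree, only the upper bound $\reg\HS_i(I(G)^k)\le 2k+i$ requires proof. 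By Theorem~\ref{Thm:resume-HS(R(I))}(c) this number is eventually a linear function of $k$, and since $I(G)$ has linear powers the same theorem yields $\HS_i(I(G)^{k+1})=I(G)\cdot\HS_i(I(G)^k)$ for all $k\gg0$; so it would suffice to establish the bound for one large power together with the fact that multiplication by the degree-two generators of $I(G)$ preserves linearity of a component of $\HS_i(\mathcal{R}(I(G)))$.

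The principal approach I would try is to show that $\HS_i(I(G)^k)$ has linear quotients for $k\gg0$. For each $k$, fix an order of $\mathcal{G}(I(G)^k)$ with linear quotients (which exists by \cite{HHZ2004}) and use the description $\HS_i(I(G)^k)=({\bf x}_Fu:u\in\mathcal{G}(I(G)^k),\,F\subseteq\set(u),\,|F|=i)$ from \cite[Lemma 1.5]{ET}. The aim is to choose these orders \emph{compatibly} in $k$ --- via a sorting order, or an order built recursively from simplicial vertices of $G^c$ --- so that the combinatorics of the sets $\set(u)$ stabilizes, and then to equip the generators ${\bf x}_Fu$ with an explicit total order, refining the order on the $u$'s and breaking ties by a fixed order on the index sets $F$, under which every colon ideal is generated by variables. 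For $i=0$ this is precisely the Herzog--Hibi--Zheng order, and for $i=1$ the decomposition $\HS_1(I(G)^k)=\Lin_1(I(G)^k)+I(G)^{\langle k+1\rangle}$ of Theorem~\ref{Thm:HS1-I(G)k} together with Theorem~\ref{Thm:HS1-I(G)^k-Lin} realizes the plan; the hope is that the pattern persists for all $i$. Should a direct argument resist, an alternative is an induction on $i$: relate $\HS_i(I(G)^k)$ to $\HS_{i-1}$ of the successive cokernels in the iterated mapping-cone resolution of $I(G)^k$, and propagate linearity from $I(G)^k$ to $I(G)^{k+1}$ using $\HS_i(I(G)^{k+1})=I(G)\cdot\HS_i(I(G)^k)$ and the techniques of \cite{FH2023}, which already supply the case $i=1$.

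The main obstacle is regularity (equivalently, linearity) control that is uniform in $i$. For $i=0,1$ one has the explicit closed forms $\HS_0(I(G)^k)=I(G)^k$ and $\HS_1(I(G)^k)=\Lin_1(I(G)^k)+I(G)^{\langle k+1\rangle}$, but no such formula is known for general $i$, and the sets $\set(u)$ for generators of large powers can be intricate; so it is not clear a priori that any natural order on the set $\{{\bf x}_Fu\}$ yields linear quotients, nor even that $\HS_i(I(G)^k)$ has linear quotients rather than merely a linear resolution. A reasonable intermediate target is to first prove the conjecture for edge ideals whose complement admits an especially well-structured simplicial-vertex ordering, thereby extending the families --- principal Borel, initial lexsegment, complete multipartite --- already treated in this section, and then to bootstrap to arbitrary $G$ with $G^c$ chordal using the recursive clique-tree structure of chordal graphs.
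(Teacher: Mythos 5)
The statement is a \emph{conjecture} in the paper (Conjecture~\ref{Conj:HS-I(G)}), not a theorem, and the paper does not prove it in general: it establishes only the case $i=1$ (Theorem~\ref{Thm:HS1-I(G)^k-Lin}) and the three special families you list as intermediate targets (principal Borel, initial lexsegment, complete multipartite), in each case by exhibiting linear quotients on an explicit order of the generators. So your text is appropriately a research program rather than a proof, and its principal approach --- show $\HS_i(I(G)^k)$ has linear quotients via the description ${\bf x}_Fu$, $F\subseteq\set(u)$, under an order refining a linear-quotient order on $\mathcal{G}(I(G)^k)$ and breaking ties on $F$ --- is exactly the one the paper follows in the cases it settles. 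The suggested intermediate targets coincide with the paper's, and the honest assessment of the obstacle (no closed form for $\HS_i(I(G)^k)$ with $i\ge2$, intricate $\set(u)$) matches the state of the art.

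Two points of detail warrant correction. First, your opening reduction leans on a step that is not justified: knowing that $\reg\HS_i(I(G)^k)$ is eventually a linear function of $k$ (Theorem~\ref{Thm:resume-HS(R(I))}(c)) and that $\HS_i(I(G)^{k+1})=I(G)\cdot\HS_i(I(G)^k)$ for $k\gg0$ does not let you propagate linearity from a single large $k_0$ forward, because multiplying an equigenerated ideal with linear resolution by $I(G)$ does not in general preserve linear resolution. That ``multiplication preserves linearity'' is precisely the substantive content that must be supplied, and in the paper it is supplied (only in special cases) via direct linear-quotient arguments, not via the eventual-linearity of regularity. Second, the proof of Theorem~\ref{Thm:HS1-I(G)^k-Lin} does not invoke the decomposition $\HS_1(I(G)^k)=\Lin_1(I(G)^k)+I(G)^{\langle k+1\rangle}$ from Theorem~\ref{Thm:HS1-I(G)k}; it follows directly from the fact that $I(G)^k$ has linear quotients together with \cite[Theorem 1.3]{FH2023}, a general result on $\HS_1$ of ideals with linear quotients, so attributing the $i=1$ case to that decomposition is a misreading. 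Neither issue undermines your main plan, but as written the regularity-reduction route has a genuine gap and should be set aside in favor of the linear-quotients route you then correctly describe.
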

	
	In support of this conjecture, we have the next partial result.
	\begin{Theorem}\label{Thm:HS1-I(G)^k-Lin}
		Let $I(G)$ be an edge with linear resolution. Then, $\HS_1(I(G)^k)$ has linear quotients, and thus a linear resolution, for all $k\ge1$.
	\end{Theorem}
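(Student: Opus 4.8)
The plan is to reduce the statement to the fact that the first homological shift of a monomial ideal with linear quotients again has linear quotients. Since $I(G)$ has a linear resolution, Fr\"oberg's theorem \cite{Froberg88} says that $G^c$ is chordal, and the theorem of Herzog--Hibi--Zheng \cite{HHZ2004} (see also \cite{F-HHZ, BDMS}) gives that $I(G)^k$ has linear quotients for every $k\ge1$; fix such a linear-quotients order $u_1<\cdots<u_m$ on $\mathcal{G}(I(G)^k)$. By \cite[Lemma 1.5]{ET} we then have
\[
\HS_1(I(G)^k)\ =\ (x_j u_i\ :\ 1\le i\le m,\ j\in\set(u_i)),
\]
a generating set consisting entirely of monomials of degree $2k+1$; in particular $\HS_1(I(G)^k)$ is equigenerated in degree $2k+1$, so combined with Theorem~\ref{Thm:HS1-I(G)k} this already forces $\HS_1(I(G)^k)=\Lin_1(I(G)^k)$ (the non-pure part $I(G)^{\langle k+1\rangle}$ is absorbed), and it remains only to prove that this equigenerated ideal has linear quotients, whence a linear resolution.

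To establish the linear-quotients property I would order the generators $x_j u_i$ first by the index $i$ of the underlying power generator and then, for fixed $i$, by a fixed total order on the variables indexed by $\set(u_i)$, pass to the minimal generating set, and verify that for any two generators the quotient of the later into the earlier one is generated by a single variable. This is exactly the assertion that $\HS_1(I(G)^k)$ inherits linear quotients from $I(G)^k$, and the key input is the work of Ficarra--Herzog \cite{FH2023}: the combinatorics of the linear-quotients order on $I(G)^k$, together with the chordality of $G^c$ (Dirac's theorem on simplicial vertices of chordal graphs), guarantees that whenever $x_{j'}u_{i'}$ precedes $x_j u_i$ there is a variable $x_\ell$ dividing $(x_{j'}u_{i'}):(x_j u_i)$ and an earlier generator $x_{j''}u_{i''}$ with $(x_{j''}u_{i''}):(x_j u_i)=(x_\ell)$. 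The verification splits into the case $i'<i$, where one transports the variable witnessing $u_{i'}:u_i$ while controlling the interaction with the extra variables $x_j,x_{j'}$, and the case $i'=i$ with $j'<j$, which is governed directly by the $\set$-structure at $u_i$.

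The main obstacle is precisely this colon-ideal verification: one must simultaneously track the ``power data'' (how the $u_i$'s sit in the linear-quotients order of $I(G)^k$) and the ``homological data'' (which element of $\set(u_i)$ is used as the extra variable), and show that the two do not obstruct each other, chordality of $G^c$ being what ultimately produces the witnessing variable in every case. An essentially equivalent route, which I would keep in reserve, is induction on $k$: by Corollary~\ref{Cor:HSRI(G)} one has $\HS_1(I(G)^{k+1})=I(G)\cdot\HS_1(I(G)^k)$, and one shows that multiplication by $I(G)$ preserves linear quotients for these ideals by mimicking the Herzog--Hibi--Zheng product argument \cite{F-HHZ, BDMS}, with the base case $k=1$ supplied by \cite{FH2023} and with Proposition~\ref{Prop:Lin1I(G)} ensuring the orders chosen at successive stages are compatible; the combinatorial bookkeeping needed there is of the same nature. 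In either approach, once linear quotients are in hand the linear resolution follows since $\HS_1(I(G)^k)$ is equigenerated.
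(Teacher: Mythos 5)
Your opening sentence states exactly the paper's proof: Herzog--Hibi--Zheng \cite{HHZ2004} (or \cite{F-HHZ,BDMS}) gives that $I(G)^k$ has linear quotients for all $k\ge1$, and \cite[Theorem 1.3]{FH2023} is precisely the general fact that $\HS_1$ of a monomial ideal with linear quotients again has linear quotients, so the conclusion is immediate. The rest of your sketch---the equigeneration digression via Theorem~\ref{Thm:HS1-I(G)k}, the idea that chordality of $G^c$ and Dirac's theorem must be invoked a second time to make the Ficarra--Herzog step work, and the reserve induction route through Corollary~\ref{Cor:HSRI(G)}---is superfluous: chordality is consumed only once, to obtain linear quotients of $I(G)^k$, after which \cite[Theorem 1.3]{FH2023} applies verbatim to an arbitrary monomial ideal with linear quotients.
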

	\begin{proof}
		By \cite{HHZ2004} (see also \cite[Theorem 1.1]{F-HHZ} or \cite{BDMS}) $I(G)^k$ has linear quotients for all integers $k\ge1$. Then, the result follows from \cite[Theorem 1.3]{FH2023}.
	\end{proof}
	
	At the moment we do not have a general strategy to tackle Conjecture \ref{Conj:HS-I(G)}. Therefore, in what follows, we consider some special classes of edge ideals with linear resolution. In particular, we prove Conjecture \ref{Conj:HS-I(G)} for principal Borel edge ideals, initial lexsegment edge ideals, and edge ideals of complete multipartite graphs.\medskip
	
	\subsection{Principal Borel edge ideals} Let ${\bf 1}=(1,\dots,1)\in\mathbb{Z}_{\ge0}^n$, and let $u=x_ix_j\in S$ with $i<j$ be a squarefree monomial of degree two. The ideal
	$$
	B^{\bf 1}(u)\ =\ (x_rx_s\ :\ r<s,\, r\le i,\, s\le j)
	$$
	is called a \textit{principal Borel edge ideal}. By \cite[Theorem 3.9(b)]{FQ} we have
	\begin{Proposition}
		Principal Borel edge ideals have homological linear powers.
	\end{Proposition}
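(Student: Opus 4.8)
I would prove the sharper assertion that, for $I=B^{\bf 1}(u)$ with $u=x_ix_j$, each $\HS_i(I^k)$ is again polymatroidal --- equivalently, strongly stable of ${\bf c}$-bounded (Borel) type --- for all $i\ge0$ and all $k\ge1$. Since polymatroidal ideals, and more generally equigenerated bounded strongly stable ideals, have linear quotients, and since $\HS_i(I^k)$ is automatically generated in the single degree $2k+i$ once $I^k$ has a linear resolution, this at once yields the desired linear resolution of $\HS_i(I^k)$.

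The first step is to understand the powers. One checks that the minimal generators $x_rx_s$ of $B^{\bf 1}(x_ix_j)$ (with $r<s\le j$ and $r\le i$) form a \emph{shifted} family of two-element subsets of $[j]$, hence the set of bases of a matroid; verifying the exchange axiom for pairs is elementary. Thus $I$ is matroidal, in particular $I(G)$ has a linear resolution, and each power $I^k$ is polymatroidal, since products of polymatroidal ideals are polymatroidal. More precisely, $I^k$ is the ${\bf k}$-bounded principal Borel ideal generated by $u^k=x_i^kx_j^k$, where ${\bf k}=(k,\dots,k)$: it is equigenerated in degree $2k$, strongly stable subject to the coordinate bound ${\bf k}$, and (as also follows from Herzog--Hibi--Zheng \cite{HHZ2004}, exactly as in Theorem \ref{Thm:HS1-I(G)^k-Lin}) it has linear quotients with respect to the lexicographic order of $\mathcal{G}(I^k)$. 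For this order one has the explicit description $\set(w)=\{\ell<m(w)\ :\ \deg_{x_\ell}(w)<k\}$, where $m(w)$ denotes the largest index of a variable dividing $w$.

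By \cite[Lemma 1.5]{ET}, the linear quotients of $I^k$ give
$$
\HS_i(I^k)\ =\ ({\bf x}_Fw\ :\ w\in\mathcal{G}(I^k),\ F\subseteq\set(w),\ |F|=i).
$$
The heart of the argument is then to show that the ideal on the right is once more strongly stable of bounded type, equigenerated in degree $2k+i$; this is the natural extension, to the ${\bf c}$-bounded setting, of Bayati's theorems that the multigraded homological shift ideals of a strongly stable, respectively matroidal, ideal are again strongly stable, respectively matroidal \cite{Bay019,Bay2023}. Concretely, given a generator ${\bf x}_Fw$, a variable $x_t$ dividing it, and an index $s<t$ for which $x_s({\bf x}_Fw/x_t)$ still respects the coordinate bound, one must exhibit a new presentation $x_s({\bf x}_Fw/x_t)={\bf x}_{F'}w'$ with $w'\in\mathcal{G}(I^k)$ and $F'\subseteq\set(w')$, $|F'|=i$. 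Together with a short check that no listed generator divides another, this yields linear quotients, and hence a linear resolution, for $\HS_i(I^k)$.

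The main obstacle is precisely this last verification: one has to control how a Borel move $x_t\mapsto x_s$ on $w$ interacts with the auxiliary set $F$. When $s$ or $t$ already lies in $F$, or when the move decreases $m(w)$ so that some index of $F$ falls outside $\set(w')$, the set $F$ must be re-selected --- pushing offending indices downward, or exchanging indices between $F$ and the support of $w'$ --- while remaining inside $\set(w')$ and honoring the exponent ceiling; organizing this bookkeeping uniformly in $i$ and $k$ is the only genuinely delicate point. Note finally that for $i\le1$ the conclusion is already in hand --- trivially for $i=0$, and by Theorem \ref{Thm:HS1-I(G)^k-Lin} for $i=1$ --- so the new content lies entirely in the range $i\ge2$.
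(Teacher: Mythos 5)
The paper's own ``proof'' is a one-line citation to \cite[Theorem 3.9(b)]{FQ}, so the real comparison is between your sketch and what that prior theorem must contain. Your preparatory observations are all correct and quite likely close to what the authors do in the cited work: $B^{\bf 1}(x_ix_j)$ is matroidal (its generators are the bases of the transversal matroid defined by $A_1=[i]$, $A_2=[j]$), hence $I^k$ is polymatroidal for all $k$; moreover $I^k$ is precisely the ${\bf k}$-bounded principal Borel ideal generated by $x_i^kx_j^k$, has linear quotients in lex order, and the formula $\set(w)=\{\ell<m(w):\deg_{x_\ell}(w)<k\}$ holds (this follows by checking that $x_\ell w/x_{m(w)}$ respects the bound and remains in the Borel orbit, exactly when $\deg_{x_\ell}(w)<k$ and $\ell<m(w)$). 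With \cite[Lemma 1.5]{ET} this gives the right description of $\mathcal{G}(\HS_j(I^k))$.

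The gap is that the single step you defer --- verifying that $\{{\bf x}_Fw:\ w\in\mathcal{G}(I^k),\ F\subseteq\set(w),\ |F|=j\}$ generates a ${\bf k}$-bounded strongly stable (equivalently polymatroidal) ideal --- is the entire content of the proposition, and it cannot be obtained by quoting the existing literature as you suggest. Bayati's theorem in \cite{Bay019} applies to matroidal ideals, i.e.\ squarefree ones, whereas $I^k$ is \emph{not} squarefree for $k\ge 2$; and the unbounded strongly stable case (due to Herzog--Moradi--Rahimbeigi--Zhu) does not apply either, because a ${\bf c}$-bounded strongly stable ideal equigenerated in degree $2k$ with $k\ge 2$ is not strongly stable in the ordinary sense. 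The general statement that homological shift ideals of polymatroidal ideals are polymatroidal is still a conjecture, so a hands-on verification of the exchange property for this specific family is unavoidable. That verification --- controlling how a bounded Borel move $x_t\mapsto x_s$ on $w$ interacts with $F\subseteq\set(w)$, re-choosing $F$ when $s$ or $t$ lies in $F$ or when $m(w)$ drops --- is exactly the case analysis you call ``the only genuinely delicate point'' and do not carry out. Until that is done, the argument only reduces the proposition to an unproved claim; compare the proof for complete multipartite graphs later in the paper, where the authors do spell out this kind of case analysis in full.
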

	
	\subsection{Initial lexsegment edge ideals} We denote by $>_{\lex}$, the lexicographical order on $S$ induced by $x_1> \cdots > x_n$. Let $v\in S$ be a squarefree monomial of degree $d$. The squarefree initial lexsegment ideal defined by $v$ is 
	\[
	\mathcal{L}^\mathrm{i}(v)\ =\ (w\in S:\ w \textit{ is a squarefree monomial with } \deg(w)=d,\ w \geq_{\lex} v).
	\]
	
	For a monomial $u=x_1^{a_1}\cdots x_n^{a_n}\in S$, we set $\deg_{x_i}(u)=a_i$ for all $i$.
	
	Let $v=x_ix_j$ be a squarefree monomial. We say that the edge ideal $\mathcal{L}^\mathrm{i}(v)$ is an \textit{initial lexsegment edge ideal}.  The powers of initial lexsegment edge ideals have been discussed in \cite[Proposition 3.2]{FMO} as recorded below.
	
	\begin{Proposition}
		Let $I=\mathcal{L}^\mathrm{i}(x_ix_j)$ be an initial lexsegment edge ideal and let $\mathcal{G}(I^k)=\{u_1 >_{\lex}\dots>_{\lex} u_m\}$. Then 
		\[
		(u_1, \ldots, u_{i-1}): (u_i) = (x_r : \deg_{x_r} (u_i) \leq k-1 \textit{ and } 1 \leq r \leq \max(u_i)-1)
		\]
	\end{Proposition}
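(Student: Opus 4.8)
The plan is to prove the two inclusions between $(u_1,\dots,u_{t-1}):(u_t)$ and the right‑hand side separately, writing $u=u_t$ and $m=\max(u)$, and to use only one structural feature of $I=\mathcal{L}^{\mathrm{i}}(x_ix_j)$: every generator $x_rx_s$ of $I$ (with $r<s$) has its \emph{smaller} index $r$ in $\{1,\dots,i\}$, and moreover $s\le j$ whenever $r=i$. Call $\{1,\dots,i\}$ the pivots. Two elementary facts are used throughout: since $I^k$ is generated in the single degree $2k$, a monomial $v$ of degree $2k$ lies in $\mathcal{G}(I^k)$ if and only if $v\in I^k$; and since each $w\in\mathcal{G}(I^k)$ is a product of $k$ squarefree quadrics, $\deg_{x_r}(w)\le k$ for every $r$. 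Finally, $x_r\in(u_1,\dots,u_{t-1}):(u_t)$ holds precisely when some $u_s$ with $u_s>_{\lex}u$ divides $x_ru$, and comparing degrees this forces $u_s=x_ru/x_{r'}$ for some variable $x_{r'}$ dividing $u$.

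For ``$\subseteq$'' I would take $x_r$ in the colon ideal and write $u_s=x_ru/x_{r'}$ as above. Then $r\ne r'$ (otherwise $u_s=u$, which is excluded), so $u_s$ and $u$ differ only in the exponents of $x_r$ (one larger in $u_s$) and of $x_{r'}$ (one smaller in $u_s$); hence $u_s>_{\lex}u$ forces $r<r'$. Since $x_{r'}$ divides $u$ we get $r<r'\le m$, i.e.\ $r\le m-1$; and from $\deg_{x_r}(u)+1=\deg_{x_r}(u_s)\le k$ we obtain $\deg_{x_r}(u)\le k-1$. This direction uses nothing beyond the equigeneration of $I^k$ and squarefreeness of the generators of $I$.

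For ``$\supseteq$'' I would fix $r$ with $\deg_{x_r}(u)\le k-1$ and $r\le m-1$, set $v=x_ru/x_m$, and reduce to proving $v\in I^k$. Indeed then $v\in\mathcal{G}(I^k)$, and $v>_{\lex}u$ since $v$ raises the exponent of $x_r$ and lowers that of $x_m$ with $r<m$; so $v=u_s$ for some $s<t$ with $v\mid x_ru$, which puts $x_r$ in the colon. To see $v\in I^k$, write $u=e_1\cdots e_k$ with $e_\ell\in\mathcal{G}(I)$, relabel so that $x_m\mid e_k$, and write $e_k=x_px_m$; then $p<m$ and $p\le i$ because $p$ is a pivot. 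If $p\ne r$, then $v=(e_1\cdots e_{k-1})(x_px_r)$, and $x_px_r$ is a generator of $I$: this is immediate when $p\le i-1$, while if $p=i$ the fact that $x_ix_m\in\mathcal{G}(I)$ with $i<m$ forces $m\le j$, so $r<m\le j$ and $x_ix_r\in\mathcal{G}(I)$ (its smaller index is $r\le i-1$, or is $i$ with larger index $\le j$). If $p=r$, so $e_k=x_rx_m$ and $r\le i$, I would use $\deg_{x_r}(u)\le k-1$ to choose some $e_\ell$ with $\ell<k$ not divisible by $x_r$ (such exists, since at most $k-1$ of the $e_\ell$ are divisible by $x_r$ and $e_k$ is one of them), say $e_\ell=x_ax_b$ with $a<b$, so that $a\le i$ is a pivot and $a,b\ne r$; then $v$ is obtained from $u$ by replacing $e_\ell e_k=x_ax_bx_rx_m$ with $x_ax_bx_r^2$, which I split as $(x_rx_a)(x_rx_b)$. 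When $r\le i-1$ both factors are generators of $I$ (each has smaller index $\le i-1$). When $r=i$, the edge $x_ix_m$ again forces $m\le j$, hence $a\le i-1$ and $b\le m\le j$ with $b\ne i$, so $x_ix_a$ and $x_ix_b$ are both generators of $I$. In every case $v$ is a product of $k$ generators of $I$, so $v\in I^k$, as needed.

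The step I expect to be the genuine obstacle is the sub‑case $p=r$ of ``$\supseteq$'': one must guarantee that the split of $x_ax_bx_r^2$ into two quadrics is never blocked by the ``$s\le j$'' restriction attached to the pivot $x_i$, and this works only because the edge $x_ix_m$ already present in $u$ forces $m=\max(u)\le j$, so that $b\le m\le j$. Everything else is routine bookkeeping. I would also note that the argument uses neither chordality of $G^c$ nor linearity of the resolution of $I$ — it applies verbatim to every initial lexsegment edge ideal — and in particular it does not require first describing $\mathcal{G}(I^k)$ explicitly.
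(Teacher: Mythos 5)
Your proof is correct. The paper does not give its own proof of this proposition but cites \cite[Proposition 3.2]{FMO}, so strictly speaking there is nothing internal to compare against; what you have produced is a self-contained reconstruction, and I have checked it line by line. The ``$\subseteq$'' direction is exactly the routine comparison of exponents that one expects, and the ``$\supseteq$'' direction correctly reduces to the inclusion $v=x_ru/x_m\in I^k$, which you verify by exhibiting an explicit factorization of $v$ into $k$ elements of $\mathcal{G}(I)$. You are right that the only genuinely delicate point is the sub-case $p=r$, where the naive substitution $e_k\mapsto x_p x_r$ would produce $x_r^2$ rather than an edge. Your resolution --- use $\deg_{x_r}(u)\le k-1$ to locate an $e_\ell$ with $\ell<k$ and $x_r\nmid e_\ell$, then rewrite $e_\ell e_k x_r/x_m=(x_rx_a)(x_rx_b)$ --- is sound, and the key inequality $b\le \max(u)=m\le j$ in the boundary sub-case $r=i$ (forced by $e_k=x_ix_m\in\mathcal{G}(I)$) is exactly what makes the split legitimate against the ``$s\le j$'' restriction at the pivot $x_i$. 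Two small stylistic remarks: in the sub-case $p\neq r$, $p=i$, the parenthetical ``its smaller index is $r\le i-1$, or is $i$ with larger index $\le j$'' is a little compressed --- it would be cleaner to say explicitly that if $r<i$ the smaller index is $r<i$, and if $r>i$ the smaller index is $i$ with larger index $r<m\le j$; and it is worth recording that the case $p=r$ implicitly needs $k\ge2$, which is automatic there because $\deg_{x_r}(u)\le k-1$ together with $x_r\mid e_k$ already forces $k\ge2$. Your closing remark that the argument uses nothing about chordality or linear resolution, only the lex-segment shape of $\mathcal{G}(I)$, is accurate and is a useful thing to notice.
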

	
	The above proposition gives us 
	\begin{equation}\label{eq:setu}
		\set(u)= \{r :\ \deg_{x_r} (u) \leq k-1 \textit{ and } 1 \leq r \leq \max(u)-1\}
	\end{equation}
	for any $u \in \mathcal{G}(I^k)$, where $I=\mathcal{L}^\mathrm{i}(x_ix_j)$.\medskip
	
	Recall that the \textit{support} of a monomial $u\in S$ is the set defined as $$\supp(u)\ =\ \{i:\ x_i\ \textit{divides}\ u\}.$$
	
	\begin{Theorem}\label{thm:initiallex}
		Initial lexsegment edge ideals have homological linear powers. 
	\end{Theorem}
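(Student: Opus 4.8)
Write $I=\mathcal{L}^{\mathrm{i}}(x_ix_j)$; the plan is to prove the stronger statement that $\HS_\ell(I^k)$ has linear quotients for all $\ell\ge0$ and all $k\ge1$. Since each such ideal is equigenerated in degree $2k+\ell$, it then has a linear resolution, which is exactly the meaning of \emph{homological linear powers}. By \cite[Proposition 3.2]{FMO} (quoted above), $I^k$ has linear quotients with respect to the lexicographic order $u_1>_{\lex}\cdots>_{\lex}u_m$ of $\mathcal{G}(I^k)$, and by formula (\ref{eq:setu}) we have $\set(u_p)=\{r:\deg_{x_r}(u_p)\le k-1,\ 1\le r\le\max(u_p)-1\}$ for every $p$. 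Feeding this into \cite[Lemma 1.5]{ET}, for each $\ell\ge0$ we obtain
$$
\HS_\ell(I^k)\ =\ ({\bf x}_Fu_p\ :\ 1\le p\le m,\ F\subseteq\set(u_p),\ |F|=\ell),
$$
an ideal equigenerated in degree $2k+\ell$. The cases $\ell=0$ and $\ell=1$ are already settled (they are \cite{FMO} and Theorem~\ref{Thm:HS1-I(G)^k-Lin} respectively, the latter applicable since an initial lexsegment edge ideal has a linear resolution), so the substance lies in the higher homological shifts; rather than inducting on $\ell$, I would exhibit, uniformly in $\ell$, an explicit order of $\mathcal{G}(\HS_\ell(I^k))$ realizing linear quotients.

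The proposed order uses the $\mathcal{G}(I^k)$-part as primary key and the squarefree tail as secondary key. Each minimal generator $w$ of $\HS_\ell(I^k)$ equals ${\bf x}_Fu_p$ for some $F\subseteq\set(u_p)$; fixing the representation with $u_p$ lexicographically largest makes $p$, $F$ and $\max(u_p)$ functions of $w$. Declare ${\bf x}_Fu_p\succ{\bf x}_Gu_q$ when $p<q$, and for $p=q$ when $F$ precedes $G$ in the lexicographic order of the two sets written as strictly increasing sequences. To verify linear quotients one fixes generators $w={\bf x}_Fu_p$ and $w'={\bf x}_Gu_q$ with $w'\succ w$ and produces $w''\succ w$ with $w'':w$ a single variable $x_t$ dividing $w':w$. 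In the regime $q<p$, linear quotients of $I^k$ supply $p'<p$ with $u_{p'}:u_p=x_a$ and $x_a\mid u_q:u_p$; writing $u_{p'}=x_a(u_p/x_b)$ with $a<b\le\max(u_p)$, the fact that every generator of $I^k$ has all exponents at most $k$ gives $\deg_{x_a}(u_p)\le k-1$, so $a\in\set(u_p)$. In the generic situation, where $a\notin F$ and $\max(u_{p'})=\max(u_p)$, the monomial $w'':={\bf x}_Fu_{p'}$ is again a generator, $w'':w=x_a$, and $x_a\mid w':w$ because $\deg_{x_a}(u_q)>\deg_{x_a}(u_p)$. In the regime $p=q$ (so $F\ne G$ and, by the order, $G$ precedes $F$), the reduction is the standard squarefree-lexsegment one: the first position where the increasing sequences of $G$ and $F$ differ exhibits $t\in G\setminus F$ and $s\in F$ with $t<s$, and then ${\bf x}_{(F\setminus\{s\})\cup\{t\}}u_p\succ w$ is a generator (since $t\in G\subseteq\set(u_p)$) whose colon with $w$ is $x_t\mid{\bf x}_{G\setminus F}=w':w$.

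The genuinely delicate point, where I expect almost all of the work to concentrate, consists of the degenerate subcases of the first regime, where the $I^k$-move and the tail $F$ obstruct one another: namely $a\in F$ (so ${\bf x}_Fu_{p'}$ carries $x_a$ too often to be a minimal generator), or $x_b$ is the unique variable of $u_p$ above $a$ with $\deg_{x_b}(u_p)=1$ (so that $\max(u_{p'})=a$ and then $a$, or some element of $F$, leaves $\set(u_{p'})$). To handle these one must replace the naive move ``pass from $u_p$ to $u_{p'}$ keeping $F$'' by a combined move that first performs a swap inside the tail $F$ --- exploiting the room created by the fact that $a\in\set(u_p)$ --- and only afterwards the $I^k$-step, and then check that the net effect on $w$ is still multiplication and division by one variable lying in $w':w$, and that the outcome is $\succ w$. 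Making these combined moves mutually consistent over all pairs $(w',w)$ is exactly what forces the secondary key to be the lexicographic order chosen above; once this is in place, what remains is a careful but routine bookkeeping of the $x$-degrees and of the integers $\max(u_p)$, along the lines displayed for the generic case.
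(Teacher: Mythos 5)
Your plan follows the same broad route as the paper --- use \cite[Lemma 1.5]{ET} together with the description of $\set(-)$ from \cite[Proposition 3.2]{FMO} to exhibit linear quotients of each $\HS_\ell(I^k)$ --- but you choose a different order on $\mathcal{G}(\HS_\ell(I^k))$: you sort ${\bf x}_F u_p$ first by the lex rank of the canonical $u_p$ and only then by the tail $F$, whereas the paper simply takes the decreasing lexicographic order on the full monomials ${\bf x}_F u$. These are genuinely different orders, and the difference matters precisely where you defer the work. In the paper's order, the first-difference variable $x_s$ is by definition the first place the full products ${\bf x}_F u$ and ${\bf x}_G v$ disagree, so $\deg_{x_s}({\bf x}_F u)<\deg_{x_s}({\bf x}_G v)$ is automatic, $x_s$ divides the colon for free, and the bound $\deg_{x_s}({\bf x}_G v)\le k$ coming from the $\set$-formula kills the one potential obstruction (``$s\in F$ and $\deg_{x_s}(w)=k$'') in a single line.

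In your set-up the candidate variable $x_a$ comes from linear quotients at the $I^k$-layer, and it satisfies $\deg_{x_a}(u_q)>\deg_{x_a}(u_p)$, but that does \emph{not} imply $\deg_{x_a}({\bf x}_G u_q)>\deg_{x_a}({\bf x}_F u_p)$. Concretely, if $a\in F$, $a\notin G$, and $\deg_{x_a}(u_q)=\deg_{x_a}(u_p)+1$, then the two full $x_a$-degrees coincide and $x_a$ does not divide $w':w$ at all; no ``combined move'' whose net effect is multiplication and division by $x_a$ can help, because $x_a$ is simply not available in the colon. This is exactly the regime you label as ``delicate but routine,'' and it is not routine: one would have to locate a \emph{different} variable $x_t$ dividing $w':w$ together with a generator $w''\succ w$ satisfying $w'':w=(x_t)$, and nothing in your sketch indicates where $t$ comes from, why such a $w''$ exists, or why it beats $w$ in the two-key order. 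The paper's direct lex order is what makes this hard case evaporate; without carrying out the degenerate cases, the argument has a real gap rather than a routine verification left to the reader.
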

	\begin{proof}
		Let $I=\mathcal{L}^\mathrm{i}(x_px_q)$ be an initial lexsegment edge ideal. We show that $\HS_j(I^k)$ has linear quotients with respect to the decreasing lexicographic ordering of the generators. Let ${\bf x}_F u$ and $ {\bf x}_G v$ be two minimal monomial generators of $\HS_j(I^k)$ with ${\bf x}_G v >_{\lex} {\bf x}_F u$, where $u,v \in \mathcal{G}(I^k)$, $F \subseteq \set(u)$, $G \subseteq \set(v)$ and $|F|=|G|=j$. We need to show that there exists some ${\bf x}_H w \in \mathcal{G}(\HS_j(I^k))$ with  ${\bf x}_H w >_{\lex} {\bf x}_F u$ such that $({\bf x}_G v) : ({\bf x}_F u )\subseteq ({\bf x}_H w ): ({\bf x}_F u )= (x_a)$ for some $a$. Let $s$ be the minimum integer such that $\deg_{x_i} ({\bf x}_Fu)=\deg_{x_i} ({\bf x}_Gv)$ for all $i <s$ and $\deg_{x_s} ({\bf x}_Fu)<\deg_{x_s} ({\bf x}_Gv)$. 
		
		Firstly, suppose $u=v$. Then, $s \in G\setminus F$. Let $a$ be the minimum element in $F\setminus G$. Since ${\bf x}_G u >_{\lex} {\bf x}_F u$, we have $s < a$. By setting $H= (F \setminus \{a\})\cup\{s\}$ and $w=u$, we obtain that ${\bf x}_Hw\in\mathcal{G}(\HS_j(I^k))$, ${\bf x}_H w >_{\lex} {\bf x}_F u$ and $({\bf x}_Hw) : ({\bf x}_Fu)=(x_{s})$.
		
		Now suppose that $u \neq v$. We first note that  $s <\max(u)$. To see this, assume that $s \geq \max(u)$. Using (\ref{eq:setu}) we observe that $i < \max(u)$ for any $i \in F$. Then due to our choice of $s$ and ${\bf x}_G v >_{\lex} {\bf x}_F u$, we have $\deg_{x_i} ({\bf x}_Fu)=\deg_{x_i} ({\bf x}_Gv)$ for all $i\in \supp({\bf x}_Fu)$. This implies that ${\bf x}_Fu$ divides ${\bf x}_Gv$, which contradicts the fact that ${\bf x}_G v $ is a minimal generator of $\HS_j(I^k)$. Hence, $s < \max(u)$.  
		
		Note that (\ref{eq:setu}) gives $\deg_{x_{s}}({\bf x}_Gv)\leq k$, and since  $\deg_{x_{s}} (u)  <\deg_{x_{s}} ({\bf x}_Gv) $, we obtain $\deg_{x_{s}} (u)  \leq k-1 $. Set $w=x_{s}(u/x_{\max(u)})$. Then $s \leq \max(w)$ because $s <\max(u)$. Following the proof of \cite[Proposition 3.2]{FMO}, we have $w \in I^k$. If $ s = \max(w)$, then $\deg_{x_t} (u) =1$ where $t=\max(u)$. By our choice of $s$ and since $i < \max(u)=t$ for any $i \in F$, it follows that ${\bf x}_Fu/x_t\in\mathcal{G}(\HS_j(I^k))$ divides ${\bf x}_Gv$, and $({\bf x}_Gv): ({\bf x}_Fu) =(x_s)$.
		
		Now suppose that $ s < \max(w)$. To show that ${\bf x}_F w \in \mathcal{G}(\HS_j(I^k))$, it only remains to prove that $F \subseteq \set(w)$. Observe that $\set(u)\setminus\{s\} \subseteq \set(w)$ because of (\ref{eq:setu}).  Therefore, $F \not\subseteq \set(w)$ if and only if $s \in F$ and $s\notin \set(w)$. Again, from (\ref{eq:setu}), we see that $s\notin \set(w)$ if and only if $k=\deg_{x_{s}} (w) $ or $ s = \max(w)$. Since $s<\max(w)$ by assumption, we may suppose that $k=\deg_{x_{s}} (w) $. Then $k=\deg_{x_{s}} (w) =\deg_{x_{s}} (u) +1$. Since $s \in F$, we obtain $k=\deg_{x_{s}} ({\bf x}_Fu) < \deg_{x_{s}} ({\bf x}_Gv)\leq k$, a contradiction. Hence $F \subseteq \set(w)$. Finally, ${\bf x}_Fw>_{\lex}{\bf x}_Fu$ and $({\bf x}_G v) : ({\bf x}_F u )\subseteq ({\bf x}_F w ): ({\bf x}_F u )= (x_s)$, as required.
	\end{proof}
	
	With a similar proof, one can show that final lexsegment edge ideals also have homological linear powers.
	
	\subsection{Edge ideals of complete multipartite graphs}
	Given two positive integers $a \leq b$, we set $[a,b]=\{c\in\mathbb{Z}: a\leq c\leq b\}$. Let $G$ be a graph with vertex set $V(G) = [n]$ and a vertex partition $V_1, \ldots, V_r$, where
	\[
	V_i=[t_{i-1}+1, t_i],\quad \text{ for all }\,\,\, i=1 , \ldots, r,
	\]
	with $t_0 = 0$ and $t_r = n$. If $E(G) = \{ \{i, j\} : i \in V_k, j \in V_\ell \text{ and } k \neq \ell\}$, then $G$ is called a \emph{complete multipartite} graph. It is easy to see that the complement graph $G^c$ of $G$ is chordal, and hence $I(G)$ has a linear resolution \cite{Froberg88}.
	
	Let $u\in S$ be a monomial. For any $1 \leq i \leq r$, we let $u_{V_i}$ denote the largest monomial $m$ dividing $u$ such that $\supp(m) \subseteq V_i$. Then, $\deg u_{V_i} = \sum_{j \in V_i} \deg_{x_j} u$. We define $\max(u)$ as the largest integer $i$ such that $i \in \supp(u)$.
	
	\begin{Lemma}\label{lem:powerpartite}
		With the notation introduced above, a monomial $u$ belongs to $\mathcal{G}(I^k)$ if and only if $\deg u = 2k$ and $\deg u_{V_i} \leq k$ for all $i = 1, \ldots, r$.
	\end{Lemma}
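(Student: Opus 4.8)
The plan is to prove the two implications separately, reducing the substantive direction to an elementary pairing argument by induction on $k$. Throughout write $I=I(G)$, and for a monomial $u$ set $m_i=\deg u_{V_i}=\sum_{j\in V_i}\deg_{x_j}u$, so that $\sum_{i=1}^{r}m_i=\deg u$.

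For the ``only if'' direction I would use the standard fact that every element of $\mathcal{G}(I^k)$ is a product of $k$ minimal generators of $I$, hence of the form $u=(x_{a_1}x_{b_1})\cdots(x_{a_k}x_{b_k})$ with $\{a_s,b_s\}\in E(G)$ for all $s$. Then $\deg u=2k$ is clear, and for each part $V_i$ one has $m_i=\sum_{s=1}^{k}|\{a_s,b_s\}\cap V_i|$; since the two endpoints of an edge of a complete multipartite graph lie in distinct parts, each term is at most $1$, so $m_i\le k$.

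For the ``if'' direction I would induct on $k$. The case $k=1$ is immediate: if $\deg u=2$ and all $m_i\le 1$, then $u$ is neither $x_a^2$ nor $x_ax_b$ with $a,b$ in a common part, so $u=x_ax_b$ for vertices $a,b$ in different parts, i.e. $u\in\mathcal{G}(I(G))$. For the inductive step I would reorder the parts so that $m_1\ge m_2\ge\cdots\ge m_r$. Since $\sum_i m_i=2k\ge 2$ and every $m_i\le k$, we must have $m_1,m_2\ge 1$ (otherwise all $m_i$ but $m_1$ vanish and $m_1=2k>k$). Choose vertices $a\in V_1$, $b\in V_2$ with $x_a\mid u$ and $x_b\mid u$ and put $u'=u/(x_ax_b)$. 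Then $\{a,b\}\in E(G)$, $\deg u'=2(k-1)$, and I would check that $\deg u'_{V_i}\le k-1$ for every $i$: the part-degrees of $u'$ are $m_1-1,m_2-1,m_3,\dots,m_r$; now $m_1-1\le k-1$ since $m_1\le k$, $m_2-1\le m_1-1\le k-1$, and for $i\ge 3$ we have $m_i\le m_3\le k-1$, because $m_3=k$ would force $m_1=m_2=m_3=k$ and hence $\sum_i m_i\ge 3k>2k$. By the inductive hypothesis $u'\in\mathcal{G}(I^{k-1})$, so $u=(x_ax_b)u'\in I^k$; as $\deg u=2k$ is the generating degree of $I^k$, this yields $u\in\mathcal{G}(I^k)$.

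The only delicate point is the bookkeeping in the last step: one must remove an edge supported on the two \emph{largest} parts so that the invariant ``all part-degrees $\le k$'' is inherited by $u'$ with $k$ replaced by $k-1$, and this is precisely where both hypotheses $\sum_i m_i=2k$ and $m_i\le k$ are used. Everything else is routine.
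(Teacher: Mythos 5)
Your proof is correct, and in the ``if'' direction it takes a genuinely different route from the paper. The paper sorts the multiset of variable indices $i_1\le\cdots\le i_{2k}$ and pairs position $p$ with position $p+k$ for $p=1,\dots,k$; the bound $\deg u_{V_i}\le k$ guarantees that each such pair straddles two different parts, which is argued by exploiting that the parts $V_i=[t_{i-1}+1,t_i]$ are intervals of consecutive integers, so a sorted run staying in one part of length $k+1$ would contradict the degree bound. That produces the factorization into $k$ edges all at once. You instead induct on $k$, peel off a single edge $\{a,b\}$ with $a,b$ in the two parts of largest current degree, and verify that the invariant ``$\deg u'_{V_i}\le k-1$ for all $i$'' survives, which requires the small but essential observation that $m_3\le k-1$ (since $m_1=m_2=m_3=k$ would give total degree $\ge 3k>2k$). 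Your version does not use the interval structure of the $V_i$ at all and so would apply verbatim to an arbitrary labelling of the parts, at the cost of a somewhat longer inductive bookkeeping; the paper's version is a one-shot argument but is tied to the specific interval convention. Both are sound; yours is marginally more robust, the paper's marginally shorter.
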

	\begin{proof}
		If $u \in \mathcal{G}(I^k)$, then $\deg u = 2k$, and it follows immediately from the definition of a complete multipartite graph that $\deg u_{V_i} \leq k$.
		
		To see the converse, let $u = x_{i_1} \cdots x_{i_{2k}}$ with $i_1 \leq \cdots \leq i_{2k}$. Given the assumption that $\deg u_{V_i} \leq k$ for all $i$, we observe that in the list $i_1, \ldots, i_{2k}$, each vertex in the first $k$ positions is adjacent to the corresponding vertex in the next $k$ positions. Specifically, for each $p = 1, \ldots, k$, the vertex in the $p$th position is adjacent to the vertex in the $(p+k)$th position because they belong to different partition sets. Hence, $u$ is a product of $k$ monomials in $I$, so $u \in \mathcal{G}(I^k)$.
	\end{proof}
	
	Next, we describe the linear quotients of the powers of edge ideals of complete multipartite graphs.
	\begin{Proposition}\label{prop:set}
		Let $G$ be a complete multipartite graph on $n$ vertices, and let $I = I(G)$. For all $k \geq 1$, $I^k$ has linear quotients with respect to the decreasing lexicographical order on $\mathcal{G}(I^k)$ induced by $x_1>\cdots>x_n$. Moreover, using the notation introduced above, for any $u \in \mathcal{G}(I^k)$, let $\max(u) \in V_d$ for some $d$. Then we have the following.
		\begin{enumerate}
			\item[\textup{(i)}] If there exists some $d'<d$ such that $\deg u_{V_{d'}} = k$, then 
			\[
			\set(u)=[1, {t_{d'-1}}] \cup [{t_{d'-1}+1},{\alpha-1} ] \cup [{t_{d'}+1}, {\max(u)-1}]
			\]
			where $\alpha$ is the maximal integer such that $\alpha\in\supp(u_{V_{d'}})$. In particular, $\ell < \max(u)$ for all $\ell \in \set(u)$. 
			\item[\textup{(ii)}] If $\deg u_{V_{d'}}< k$, for all $d'< d$ then $\set(u)=[1, \max(u)-1]$. In particular, $\ell < \max(u)$ for all $\ell \in \set(u)$. 
			\item[\textup{(iii)}] For any  $1 \leq i \leq n$ and for any $m \in \mathcal{G}(\HS_j(I^k))$, we have $\deg_{x_i}  m \leq k$. 
		\end{enumerate}
	\end{Proposition}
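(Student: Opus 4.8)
To prove the proposition, the plan is to reduce everything to a single combinatorial observation about variable exchanges. Fix $u,v\in\mathcal{G}(I^k)$; since both have degree $2k$, a monomial $w\in\mathcal{G}(I^k)$ with $w\neq v$ divides $x_iv$ exactly when $w=x_iv/x_\ell$ for some $\ell\in\supp(v)$ with $\ell\neq i$, and then $w>_{\lex}v$ if and only if $i<\ell$. By Lemma~\ref{lem:powerpartite}, such a monomial $x_iv/x_\ell$ lies in $\mathcal{G}(I^k)$ if and only if $i$ and $\ell$ lie in the same part $V_a$, or they lie in distinct parts $V_a\ni i$ and $V_b\ni\ell$ with $\deg v_{V_a}<k$ (the exchange only raises the $V_a$-degree, and only by one). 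I would apply this repeatedly, always choosing $\ell$ either as $\max(v)$ or as a carefully chosen element of the part containing $i$.

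For the linear quotients assertion I would use the standard criterion: it suffices to check that for every pair $u>_{\lex}v$ in $\mathcal{G}(I^k)$ there is $w\in\mathcal{G}(I^k)$ with $w>_{\lex}v$ such that $w:v$ is a single variable dividing $u:v$. Set $s=\min\{i:\deg_{x_i}u\neq\deg_{x_i}v\}$, so $\deg_{x_s}u>\deg_{x_s}v$ and $x_s\mid u:v$. A short degree count, using that $u$ and $v$ agree on $\{1,\dots,s-1\}$ while $\supp(v)\subseteq\{1,\dots,\max(v)\}$, gives $s<\max(v)$. Let $V_a$ be the part containing $s$. If $\deg v_{V_a}<k$, take $w=x_sv/x_{\max(v)}$. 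If $\deg v_{V_a}=k$, then $\deg u_{V_a}\le k=\deg v_{V_a}$, and comparing $\sum_{i\in V_a,\,i\le s}\deg_{x_i}u>\sum_{i\in V_a,\,i\le s}\deg_{x_i}v$ with $\deg u_{V_a}\le k=\deg v_{V_a}$ forces $\sum_{i\in V_a,\,i>s}\deg_{x_i}v\ge1$, so there is $\gamma\in\supp(v)\cap V_a$ with $\gamma>s$; then take $w=x_sv/x_\gamma$. In both cases $w\in\mathcal{G}(I^k)$, $w>_{\lex}v$, and $w:v=x_s$. I expect this ``$\deg v_{V_a}=k$'' case to be the only genuine obstacle; the remaining steps are bookkeeping.

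With linear quotients in hand, $i\in\set(u)$ (for $u=u_j$) means $x_iu\in(u_1,\dots,u_{j-1})$, i.e., by the opening observation, that there is $\ell\in\supp(u)$ with $\ell>i$ and $x_iu/x_\ell\in\mathcal{G}(I^k)$; in particular $\set(u)\subseteq\{1,\dots,\max(u)-1\}$. Write $\max(u)\in V_d$. In case (ii) ($\deg u_{V_{d'}}<k$ for every $d'<d$), for each $i<\max(u)$ choose $\ell=\max(u)$: if $i\in V_d$ this is a same-part exchange, and if $i\in V_m$ with $m<d$ then $\deg u_{V_m}<k$ makes it a legal cross-part exchange, so $i\in\set(u)$; hence $\set(u)=\{1,\dots,\max(u)-1\}$. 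In case (i), let $d'<d$ be the index with $\deg u_{V_{d'}}=k$ — it is unique, since two such would force $\deg u\ge 2k+1$ — and set $\alpha=\max(\supp(u)\cap V_{d'})$; note $\deg u_{V_m}\le k-1$ for all $m\neq d',d$. For $i\le t_{d'-1}$ and for $t_{d'}<i<\max(u)$ the choice $\ell=\max(u)$ works as in case (ii) (the part of $i$ has degree $<k$, or equals $V_d$). For $i\in V_{d'}$ a cross-part exchange is excluded because $\deg u_{V_{d'}}=k$, so $i\in\set(u)$ if and only if there is $\ell\in\supp(u)\cap V_{d'}$ with $\ell>i$, that is, if and only if $i<\alpha$. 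Assembling the three ranges yields the stated formula, and in both cases every $\ell\in\set(u)$ satisfies $\ell<\max(u)$.

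Finally, for (iii): by \cite[Lemma 1.5]{ET} every $m\in\mathcal{G}(\HS_j(I^k))$ has the form ${\bf x}_Fu$ with $u\in\mathcal{G}(I^k)$, $F\subseteq\set(u)$, $|F|=j$ (and $j=0$ is Lemma~\ref{lem:powerpartite}). If $\deg_{x_i}u<k$, then $\deg_{x_i}m\le\deg_{x_i}u+1\le k$. If $\deg_{x_i}u=k$, let $V_e$ be the part containing $i$; then $\deg u_{V_e}=k$ and $u_{V_e}=x_i^k$, so $\supp(u)\cap V_e=\{i\}$, and I would deduce from parts (i)--(ii) that $i\notin\set(u)$: if $e=d$, then $\max(u)\in\supp(u)\cap V_d=\{i\}$, so $i=\max(u)\notin\{1,\dots,\max(u)-1\}$; if $e<d$, then $d'=e$ and $\alpha=i$, so the formula for $\set(u)$ omits $i$; and $e>d$ cannot occur since $i\in\supp(u)$. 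Hence $i\notin F$ and $\deg_{x_i}m=k$, which finishes the proof.
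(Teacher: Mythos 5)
Your proof is correct and follows essentially the same approach as the paper's: both establish linear quotients by exchanging a single variable of $v$ for $x_s$ (the first variable on which $u$ and $v$ disagree) while preserving the part-degree constraints of Lemma~\ref{lem:powerpartite}, then read off $\set(u)$ via the same kind of exchanges, and deduce (iii) from (i)--(ii). Your write-up is slightly more explicit in two places where the paper compresses — you spell out the reverse inclusion $\set(u)\subseteq[1,\alpha-1]\cup[t_{d'}+1,\max(u)-1]$ in case (i), and you make the degree count for the existence of $\gamma\in\supp(v)\cap V_a$ explicit — but the underlying mechanism is identical.
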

	
	\begin{proof}
		Let $u, v \in \mathcal{G}(I^k)$ with $u >_{\lex} v$, where $u = x_{i_1} \cdots x_{i_{2k}}$ and $v = x_{j_1} \cdots x_{j_{2k}}$. Then there exists an integer $p$ such that $i_\ell = j_\ell$ for $1 \leq \ell \leq p - 1$ and $i_p < j_p$. Let $i_p \in V_a$ and $j_p \in V_b$ for some $a$ and $b$. Since $i_p < j_p$, we have $a \leq b$. Define $w = x_{i_p} v / x_{j_p}$. Then $w >_{\lex} u$ and $(w) : (u) = (x_{i_p})$, where $x_{i_p}$ divides the generator of $(u) : (v)$. It remains to show that $w \in \mathcal{G}(I^k)$. If $a = b$, then for all $i$, $\deg w_{V_i} = \deg u_{V_i} \leq k$, so $w \in \mathcal{G}(I^k)$ by Lemma~\ref{lem:powerpartite}. If $a < b$, then by our choice of $p$, $\deg v_{V_b} < \deg u_{V_b} \leq k$, and thus $w \in \mathcal{G}(I^k)$ again by  Lemma~\ref{lem:powerpartite}.
		
		Statements (i) and (ii) follow immediately from the above argument together with Lemma~\ref{lem:powerpartite}. Indeed, let $u \in \mathcal{G}(I^k)$ with $\max(u) \in V_d$ for some $d$, and let $i \in \supp(u)$ and $j \in V(G)$. Set $w = x_j u / x_i$. Then $(w) :( u) = (x_j)$,  and $w >_{\lex} u$ if and only if $j < i$. By Lemma~\ref{lem:powerpartite},  $w \in \mathcal{G}(I^k)$ if and only if $\deg w_{V_t} \leq k$ for all $t = 1, \ldots, r$. This implies $w = x_j u / x_{\max(u)} \in \mathcal{G}(I^k)$ for all $j \in V_d$ with $j<\max(u)$ and for all $j \in V_{d'}$ with $\deg u_{V_{d'}} < k$ and $d' < d$. If $\deg u_{V_{d'}} = k$ for some $d'<d$ then $w = x_j u / x_\alpha \in \mathcal{G}(I^k)$ for all $j \in V_{d'}$ with $j<\alpha$ where $\alpha=\max\supp(u_{V_{d'}})$.
		
		The statement (iii) is a consequence of Lemma~\ref{lem:powerpartite} and (i) and (ii). 
	\end{proof}
	
	Now we are ready to prove our main result. 
	\begin{Theorem}
		Edge ideals of complete multipartite graphs have homological linear powers.
	\end{Theorem}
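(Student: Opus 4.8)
The plan is to follow the blueprint of the proof of Theorem~\ref{thm:initiallex}, with the partition blocks $V_1,\dots,V_r$ taking over the role that single variables play in the lexsegment case. By Proposition~\ref{prop:set}, $I=I(G)$ has linear quotients with respect to the decreasing lexicographic order on $\mathcal{G}(I^k)$ for every $k\ge1$, and $\set(u)$ is given explicitly by Proposition~\ref{prop:set}(i),(ii). Hence, by \cite[Lemma~1.5]{ET},
$$
\HS_j(I^k)\ =\ ({\bf x}_Fu\ :\ u\in\mathcal{G}(I^k),\ F\subseteq\set(u),\ |F|=j)
$$
for all $j\ge0$, which is an equigenerated ideal, so it is enough to prove that it has linear quotients with respect to the decreasing lexicographic order on these generators. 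Fix two minimal generators ${\bf x}_Gv>_{\lex}{\bf x}_Fu$ and let $s$ be the smallest index with $\deg_{x_s}({\bf x}_Fu)\ne\deg_{x_s}({\bf x}_Gv)$; then $\deg_{x_s}({\bf x}_Fu)<\deg_{x_s}({\bf x}_Gv)$, so $x_s$ divides $({\bf x}_Gv):({\bf x}_Fu)$, and it suffices to produce ${\bf x}_Hw\in\mathcal{G}(\HS_j(I^k))$ with ${\bf x}_Hw>_{\lex}{\bf x}_Fu$ and $({\bf x}_Hw):({\bf x}_Fu)=(x_s)$.

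If $u=v$, the argument is verbatim that of Theorem~\ref{thm:initiallex}: then $s\in G\setminus F$, and with $a=\min(F\setminus G)$ one has $s<a$, so $H=(F\setminus\{a\})\cup\{s\}$ and $w=u$ work. Assume now $u\ne v$. Exactly as in Theorem~\ref{thm:initiallex}, Proposition~\ref{prop:set}(i),(ii) shows that every element of $\set(u)$, hence of $F$, is $<\max(u)$, and $\supp(u)\subseteq[1,\max(u)]$; therefore $s\ge\max(u)$ would force ${\bf x}_Fu$ to divide ${\bf x}_Gv$, contradicting minimality of ${\bf x}_Gv$. Thus $s<\max(u)$. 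Write $\max(u)\in V_d$ and let $V_c$ be the block containing $s$. The exchange monomial will be obtained from $u$ by replacing a single variable by $x_s$, the variable removed being chosen so that Lemma~\ref{lem:powerpartite} keeps the result in $I^k$: if $c=d$, or $c\ne d$ and $\deg u_{V_c}<k$, put $w=x_s(u/x_{\max(u)})$, for which $\deg w_{V_i}\le k$ for all $i$; if $c\ne d$ and $\deg u_{V_c}=k$, then $V_c$ is forced to be the unique saturated block $V_{d'}$ of Proposition~\ref{prop:set}(i) with $d'<d$, and one removes instead $x_\alpha$, where $\alpha=\max\supp(u_{V_{d'}})$, putting $w=x_s(u/x_\alpha)$, which lies in $\mathcal{G}(I^k)$ since $\deg w_{V_{d'}}=\deg u_{V_{d'}}=k$ and all other block degrees are unchanged.

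In either case $\max(w)=\max(u)$, apart from the boundary situation $s=\max(w)$ which is settled as in Theorem~\ref{thm:initiallex}: there $\deg_{x_{\max(u)}}(u)=1$, the monomial ${\bf x}_Fu/x_{\max(u)}$ divides ${\bf x}_Gv$ and $({\bf x}_Gv):({\bf x}_Fu)=(x_s)$, so ${\bf x}_Gv$ itself already serves as the required generator. When $s<\max(w)$, it remains to verify $F\subseteq\set(w)$, so that ${\bf x}_Fw\in\mathcal{G}(\HS_j(I^k))$ — or, if ${\bf x}_Fw$ is not a minimal generator, a minimal generator dividing it still has colon $(x_s)$ against ${\bf x}_Fu$; then ${\bf x}_Fw>_{\lex}{\bf x}_Fu$ and $({\bf x}_Fw):({\bf x}_Fu)=(x_s)$ conclude the proof.

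I expect the main obstacle to be this last check in the saturated-block case. Passing from $u$ to $w=x_s(u/x_\alpha)$ can change $\max\supp(w_{V_{d'}})$, and hence, via Proposition~\ref{prop:set}(i), the intersection $\set(w)\cap V_{d'}$; one must rule out that some index of $F$ is expelled — the risk being that $x_s$ pushes $\deg_{x_s}(w)$ up to $k$, or that $s$ itself becomes the top of $V_{d'}$ in $w$. This has to be controlled by a careful comparison that uses the agreement of ${\bf x}_Fu$ and ${\bf x}_Gv$ below index $s$ together with the constraint $\deg v_{V_{d'}}\le k$ from Lemma~\ref{lem:powerpartite}, in order to locate precisely where $s$ can sit inside $V_{d'}$; configurations in which $s$ lies at or beyond $\alpha$ must either be shown not to occur for a pair of minimal generators, or be treated by removing a different variable of $V_{d'}$ or by replacing $x_s$ with another variable dividing $({\bf x}_Gv):({\bf x}_Fu)$. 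Modulo this block-local bookkeeping, the scheme is a direct transcription of the proof of Theorem~\ref{thm:initiallex}.
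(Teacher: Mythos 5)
Your proposal identifies the correct high-level strategy, which coincides with the paper's: prove $\HS_j(I^k)$ has linear quotients for the decreasing lex order on $\{{\bf x}_Fu\}$, using Proposition~\ref{prop:set} to describe $\set(u)$ and Lemma~\ref{lem:powerpartite} to decide membership in $\mathcal{G}(I^k)$, and you correctly isolate the bottleneck as verifying $F\subseteq\set(w)$ after the exchange. However, the proposal does not actually close the gap that you yourself flag at the end (``Modulo this block-local bookkeeping\dots''), and that bookkeeping \emph{is} the proof. In the paper the argument splits into four cases keyed to whether $\deg u_{V_a}=k$ or $\le k-1$ (where $s\in V_a$) and whether the smallest $r\in\supp(u)$ with $r>s$ lies in $V_a$ or not; within each case there are further subcases depending on whether $s\in F$ and whether $F$ contains indices in $[s,r)$ (or $[s,\max(V_a)]$). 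In several of these subcases exchanging a single variable in $u$ and keeping $F$ fixed simply fails: one must exchange two variables, $w=x_s x_c u/(x_r x_b)$, and simultaneously modify $F$ to $F'=(F\setminus\{c\})\cup\{b\}$ or $F'=(F\setminus\{c\})\cup\{s\}$, after first establishing (via a separate \emph{Claim} in each case, proved by contradiction using the agreement of ${\bf x}_Fu$ and ${\bf x}_Gv$ below $s$ and the block-degree bound from Lemma~\ref{lem:powerpartite}) that a suitable $b\in\supp(u_{V_a})$ with $b<s$ and $b\notin F$ exists. None of this machinery appears in your sketch, and the last paragraph, which hand-waves at ``removing a different variable'' or ``replacing $x_s$ with another variable'', does not pin down which choices work in which configurations.

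There is also a concrete technical misstep: you propose to remove $x_{\max(u)}$ (or $x_\alpha$ in the saturated case), whereas the paper removes $x_r$ with $r=\min\{i\in\supp(u):i>s\}$. Your choice is strictly worse for the bookkeeping: whenever $r<\max(u)$, the paper's $w=x_s u/x_r$ satisfies $\max(w)=\max(u)$ automatically, so the interval $[t_{d'}+1,\max(u)-1]$ in $\set(w)$ is unchanged; your $w=x_s u/x_{\max(u)}$ can have $\max(w)<\max(u)$ even when $\deg_{x_{\max(u)}}u=1$ and there exist other support indices strictly between $s$ and $\max(u)$, which expels elements of $F$ from $\set(w)$ for a reason orthogonal to the saturated-block issue you flag. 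So your boundary condition ``$s=\max(w)$'' does not exhaust the problematic configurations. In short, the plan is sound and the diagnosis of where the difficulty lies is accurate, but the proof as written has a genuine gap where the substantive case analysis should be.
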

	\begin{proof}
		Let $G$ be a complete multipartite graph on $n$ vertices, with notation as introduced above, and let $I=I(G)$. Combining \cite[Theorem 2.3]{KNQ} with \cite[Theorem 2.2]{Bay019} we obtain that $\HS_j(I)$ is polymatroidal for all $j$, and thus $\HS_j(I)$ has liner resolution for all $j$, see \cite[Theorem 12.6.2]{HHBook}.
		
		Now let $k \geq 2$. We will show that $\HS_j(I^k)$ has linear quotients with respect to the decreasing lexicographic order on $\mathcal{G}(\HS_j(I^k))$ induced by $x_1 > \cdots > x_n$.
		
		Let $u'$ and $v'$ be two distinct elements in $\mathcal{G}(\HS_j(I^k))$ with $v' >_{\lex} u'$, and let $s$ be the smallest integer such that $\deg_{x_i} u' = \deg_{x_i} v'$ for all $i < s$ and $\deg_{x_s} u' < \deg_{x_s} v'$. We show that there exists a monomial $w'$ satisfying the following:
		\begin{equation}\label{w'}
			w' \in \mathcal{G}(\HS_j(I^k)) \text{ such that } (w'):(u')= (x_s) \text{ and } w'>_{\lex} u' .
		\end{equation} 
		Since $u', v' \in \mathcal{G}(\HS_j(I^k))$, we write $u' = {\bf x}_F u$ and $v' = {\bf x}_G v$ for some $u, v \in \mathcal{G}(I^k)$ and $F \subseteq \set(u)$, $G \subseteq \set(v)$ with $|F| = |G| = j$.
		
		If $u = v$, then $s \in G \setminus F$ and there exists some $s' \in F \setminus G$ such that $s' > s$. Let $F' = (F \setminus \{s'\}) \cup \{s\}$. Then $w' = {\bf x}_{F'} u \in \mathcal{G}(\HS_j(I^k))$ satisfies (\ref{w'}).
		
		Now assume that $u \neq v$. Let $r$ be the smallest integer for which $r\in\supp(u)$ such that $s < r$. Note that such an $r$ exists: if $\max(u) \leq s$, then $\ell < s$ for all $\ell \in F$ since $\ell < \max(u)$ by Proposition~\ref{prop:set}(i) and (ii). Given $\deg_{x_i} u' = \deg_{x_i} v'$ for all $i < s$, we conclude that $u'$ divides $v'$, contradicting $v' \in \mathcal{G}(\HS_j(I^k))$.
		
		Let $s \in V_a$ for some $a$. Recall that $\deg u_{V_a} \leq k$ due to Lemma~\ref{lem:powerpartite}. We distinguish the following cases.\medskip
		
		\textsc{Case 1.} Assume $\deg u_{V_a} = k$ and $r \notin V_a$. Since $s < r$ and $s \in V_a$, the partition construction gives $r \in V_\ell$ for some $\ell > a$. By the choice of $r$, we have $s \geq \max \{i : i \in \supp(u_{V_a})\}$, so $s \notin \set(u)$ by Proposition~\ref{prop:set}(i) and hence $s \notin F$. Also, Proposition~\ref{prop:set}(iii) implies $\deg_{x_s} v' \leq k$, so $\deg_{x_s} u \leq \deg_{x_s} u' < k$. Since $\deg u_{V_a} = k$, there exists some $b \in V_a \setminus \{s\}$ with $b \in \supp(u_{V_a})$ and $b < s$.\smallskip\\
		\textbf{Claim:} There exists some $b \in \supp(u_{V_a})$ with $b < s$ such that $b \notin F$.\smallskip\\
		\textbf{Proof of Claim:} Assume, for contradiction, that no such $b$ exists, meaning $b \in F$ for every $b \in \supp(u_{V_a})$ with $b < s$. Then $\deg_{x_b} u' = \deg_{x_b} u + 1$. Given our choice of $s$, for all such $b$ we get $\deg_{x_b} v' = \deg_{x_b} u' = \deg_{x_b} u + 1$, and since $\deg u_{V_a} = k$, 
		we obtain $u_{V_a} =v_{V_a}$. Therefore, $s \geq \max \{i : i \in \supp(v_{V_a})\}$, implying $s \notin G$ by Proposition~\ref{prop:set}(i). This leads to $\deg_{x_s} u = \deg_{x_s} v$, contradicting $\deg_{x_s} u < \deg_{x_s} v$. Thus, the claim holds.
		
		We fix $b$ as the element satisfying our claim.  Since $s \notin F$ (because $s \notin \set(u)$), if there exists $a \in F$ with $a > s$, set $F' = (F \setminus \{a\}) \cup \{b\}$ and $w = x_s u / x_b$. By Lemma~\ref{lem:powerpartite}, $w \in \mathcal{G}(I^k)$, and since $b<s$, applying Proposition~\ref{prop:set}, we obtain $F'\subseteq \set(w')$. Thus, $w'={\bf x}_{F'}w \in \mathcal{G}(\HS_j(I^k))$ satisfies (\ref{w'}). 
		
		On the other hand, if $a < s$ for all $a \in F$, then there exists $c \in F \cap V_p$ such that $p < a$.  Indeed, if no such $c$ exists, then $F\subseteq V_a$ and $\deg u'_{V_a}= k+j=\deg v'_{V_a}= k+j$ because $\deg_{x_k} u' = \deg_{x_k} v'$ for all $k < s$. Then ${\bf x}_Fu_{V_a}={\bf x}_Gv_{V_a}$, contradicting $\deg_{x_s} u < \deg_{x_s} v$. Therefore, we can choose $c \in F\cap V_p$ with $p< a$. Set $w=(x_cx_s u )/x_rx_b$, where $b$ is the integer fixed above, and  $F'= \{b\} \cup (F\setminus \{c\})$.  Observe that $\deg u_{V_p}<k$ because $\deg u_{V_a}=k$, $1<\deg u_{V_\ell}$ where $r \in V_\ell$, and $\deg u=2k$. Hence $\deg w_{V_p}\leq k$ and $w \in I^k$ due to Lemma~\ref{lem:powerpartite} and $F'\subseteq \set (w)$ due to Proposition~\ref{prop:set}(i). Therefore $w'={\bf x}_{F'}w \in \mathcal{G}(\HS_j(I^k))$ satisfies (\ref{w'}). \medskip
		
		\textsc{Case 2.} Assume $\deg u_{V_a}= k$, and  $r\in V_a$. If $u'$ satisfies one of the following conditions:  $r\neq \max(u_{V_a}) $; $r = \max(u_{V_a}) $ and $\deg_{x_r} u>1$; $r = \max(u_{V_a}) $, $ \deg_{x_r} u=1$, and there does not exist any $d \in F$ such that $s \leq  d < r$, then 
		let $w=x_su/x_r$. By Lemma~\ref{lem:powerpartite}, $w \in \mathcal{G}(I^k)$, and using Proposition~\ref{prop:set}(i), we have $F\subseteq \set(w)$. Therefore, $w'={\bf x}_{F}w \in \mathcal{G}(\HS_j(I^k))$ and $w'$ satisfies (\ref{w'}). 
		
		One the other hand, if none of the above conditions hold, then the only remaining case is when $r = \max(u_{V_a})$, $\deg_{x_r} u=1$, and there exists some $d \in F$ such that $s \leq  d < r$.  Let $c$ be the maximum integer in $F$ such that $s \leq c < r$. If $s\notin F$, then set $F'=(F\setminus \{c\} ) \cup \{s\}$ and $w=x_cu/x_r$. By Lemma~\ref{lem:powerpartite}, $w \in \mathcal{G}(\HS_j(I^k))$, and by  Proposition~\ref{prop:set}, $F'\subseteq \set(w)$. Thus, $w'={\bf x}_{F'}w \in \mathcal{G}(\HS_j(I^k))$ and $w'$ satisfies (\ref{w'}). 
		
		Now suppose $ s \in F$. Then  $\deg_{x_s} u'=\deg_{x_s} u+1$. We first prove the following\smallskip\\
		{\bf Claim:} There exists some $b \in \supp(u_{V_a})$ with $b < s$ such that $b \notin F$.\smallskip\\
		{\bf Proof of claim:} For $s <  d < r$, we have $d \notin \supp(u)$ because of the choice of $r$. Since $r = \max(u_{V_a}) $ with $\deg_{x_r} u=1$ and $\deg u_{V_a}= k\geq 2$, there exists some $b \in \supp(u_{V_a})$ with $b \leq s<r$. If $b \in F$ for each $b \in \supp(u_{V_a})$ with $b\leq s$, then $\deg_{x_b} u'\geq 2$. Then $\deg_{x_b} v'\geq 2$. Since ${\bf x}_G$ is squarefree, we obtain $u_{V_a}/x_r$ divides $v_{V_a}$ and $\deg v_{V_a}\geq k-1$. If $s\notin G$, then $\deg_{x_s} v'=\deg_{x_s} v$, which eventually contradicts $\deg_{x_s} u' < \deg_{x_s}v'$ because $\deg_{x_s} v \leq \deg_{x_s} u+1$ (since $\deg u_{V_a}/x_r=k-1$), and $\deg_{x_s} u'=\deg_{x_s} u+1$.  This shows that $s\in G$. By Proposition~\ref{prop:set},  $s \in G$ is possible only if either $ \deg v_{V_a}=k-1$, or $\deg v_{V_a}=k $ and  $s< \max(v_{V_a})$. In both cases, we would have $\deg_{x_s} v' \leq \deg_{x_s} u+1=\deg_{x_s} u'$, which again contradicts $\deg_{x_s} u' < \deg_{x_s}v'$. Therefore, our assumption is false and there exists some $b \leq s$ with $b \in \supp(u_{V_a})$ and $b \notin F$. Since $ s \in F$, we have $b<s$ and thus our claim holds. 
		
		Let $b \in \supp(u_{V_a})$ with $b \neq r$ and $b \notin F$. Set $F'=(F\setminus \{c\} ) \cup \{b\}$ and $w=x_sx_cu/x_rx_b$. By Lemma~\ref{lem:powerpartite}, $w \in \mathcal{G}(I^k)$, and by Proposition~\ref{prop:set}, $F'\subseteq \set(w)$. Therefore, $w'={\bf x}_{F}w \in \mathcal{G}(\HS_j(I^k))$ and $w'$ satisfies (\ref{w'}). \medskip
		
		\textsc{Case 3.} Assume $\deg u_{V_a} \leq k-1$ and $r\notin V_a$. If $\deg u_{V_a}< k-1$, or $\deg u_{V_a} = k-1$ and no $d \in F$ satisfies $s \leq  d \leq \max(V_a)$, then set $w=x_su/x_r$. Since $\deg w_{V_a} =\deg u_{V_a}+1 \leq k$ and $\max(u_{V_a}) \leq \max(w_{V_a})=s$, Lemma~\ref{lem:powerpartite} gives $w=x_su/x_r \in \mathcal{G}(I^k)$ and Proposition~\ref{prop:set}(i) and (ii) gives $F \subseteq \set(w)$. Therefore, $w'={\bf x}_Fw \in  \mathcal{G}(\HS_j(I^k))$ satisfies (\ref{w'}).
		
		Now assume $\deg u_{V_a} = k-1$ and there exists some $d \in F$ such that $s \leq  d \leq \max(V_a)$. Let $c$ be the maximum in $F$ such that $s \leq c \leq \max(V_a)$. If $s\notin F$, set $F'=(F\setminus \{c\} ) \cup \{s\}$ and $w=x_cu/x_r$. By Lemma~\ref{lem:powerpartite}, $w \in \mathcal{G}(\HS_j(I^k))$ and by Proposition~\ref{prop:set}(i), $F'\subseteq \set(w)$. Thus, $w'={\bf x}_{F'}w \in \mathcal{G}(\HS_j(I^k))$ satisfies (\ref{w'}). Now, suppose that $ s \in F$. Then  $\deg_{x_s} u'=\deg_{x_s} u+1$. We first prove the following\smallskip\\
		{\bf Claim:} There exists some $b \in \supp(u_{V_a})$ with $b < s$ such that $b \notin F$.\smallskip\\
		{\bf Proof of claim:} Since $k \geq 2$, $\supp(u_{V_a})\neq \emptyset$. Then by choice of $r$, for all $b \in \supp(u_{V_a})$ we have $b \leq s$. Assume $b \in F$ for all $b \in \supp(u_{V_a})$. Then $\deg_{x_b} v'=\deg_{x_b} u'\geq 2$. Since ${\bf x}_G$ is squarefree, we obtain $k-1 \leq \deg v_{V_a}$ and $u_{V_a}$ divides $v_{V_a}$.
		\begin{enumerate}
			\item[$\bullet$] If $\deg v_{V_a} = k-1$, then $u_{V_a}=v_{V_a}$, and $\deg_{x_s} v'$  is at most $\deg_{x_s}u'$. 
			\item[$\bullet$]  If $\deg v_{V_a} = k$ and $s \notin G$, then $\deg_{x_s} v'= \deg_{x_s} v$, and $\deg_{x_s} v'\le\deg_{x_s}u'$.
			\item[$\bullet$]   If $\deg v_{V_a} = k$ and  $s\in G$, then  by Proposition~\ref{prop:set}, we have $s< \max(v_{V_a})$. This gives  $\deg_{x_s} v =\deg_{x_s} u$, and $\deg_{x_s} v'= \deg_{x_s} v+1= \deg_{x_s}u'$. 
		\end{enumerate}  
		In all three cases above, we have $\deg_{x_s} v' \leq \deg_{x_s} u'$, contradicting $\deg_{x_s} u' < \deg_{x_s}v'$. This shows that  there exists some $b \leq s$ with $b \in \supp(u_{V_a})$ and $b \notin F$. Since $ s \in F$, we obtain $b<s$ and our claim holds. 
		
		Let $b \in \supp(u_{V_a})$ with $b < s$ such that $b \notin F$. Set $F'=(F\setminus \{c\} ) \cup \{b\}$ and $w=x_sx_cu/x_rx_b$. By Lemma~\ref{lem:powerpartite}, $w \in \mathcal{G}(I^k)$ and Proposition~\ref{prop:set}(i) gives $F'\subseteq \set(w)$.  Thus, $w'={\bf x}_{F}w \in \mathcal{G}(\HS_j(I^k))$ satisfies (\ref{w'}). \medskip
		
		\textsc{Case 4.} Assume $\deg u_{V_a} \leq k-1$ and $x_r\in V_a$. This case is similar to Case 2 to some extent; however, here we compare $r$ with $\max(u)$ instead of $\max(u_{V_a})$. 
		
		If $r \neq \max(u)$ or $\deg_{x_\alpha} u >1$, or $r = \max(u)$ with $\deg_{x_r} u =1$ and no $d \in F$ satisfies $s \leq d < r$, then set $w=x_su/x_r$. By Lemma~\ref{lem:powerpartite}, $w=x_su/x_r \in \mathcal{G}(I^k)$, and Proposition~\ref{prop:set}(i) and(ii) imply $F \subseteq \set(w)$. Therefore   $w'={\bf x}_Fw \in  \mathcal{G}(\HS_j(I^k))$ satisfies (\ref{w'}).
		
		The only case remain to be discussed is when $r = \max(u)$ with $\deg_{x_r} u =1$ and there exists some $d \in F$ satisfies $s \leq d < r$. Let $c$ be the maximum in $F$ such that $s \leq  c \leq r$. If $s\notin F$, set $F'=(F\setminus \{c\} ) \cup \{s\}$ and $w=x_cu/x_r$. By Lemma~\ref{lem:powerpartite}, $w \in \mathcal{G}(\HS_j(I^k))$, and by Proposition~\ref{prop:set}(i) gives $F'\subseteq \set(w)$. Then $w'={\bf x}_{F'}w \in \mathcal{G}(\HS_j(I^k))$ satisfies (\ref{w'}). Now suppose that $ s \in F$. Then  $\deg_{x_s} u'=\deg_{x_s} u+1$. We first prove the following\smallskip\\
		{\bf Claim:} There exists some $b \in \supp(u)$ with $b < s$ such that $b \notin F$.\smallskip\\
		{\bf Proof of claim:} Since $r= \max(u)$, there exists some $b \in \supp(u)$ such that $b \in V_t$ for some $t < a$, also $b<s$ because of $s \in V_a$ and the choice of $r$.  If $b \in F$, for all $b \in \supp(u)$ with $b < s$, then $\deg_{x_b} v'=\deg_{x_b} u'\geq 2$. Since ${\bf x}_G$ is squarefree, we obtain $u/x_r$ divides $v$. 
		\begin{enumerate}
			\item[$\bullet$]  If $s \notin G$, then $\deg_{x_s} v'= \deg_{x_s} v$ and $\deg_{x_s} v$ is at most $\deg_{x_s} u+1=\deg_{x_s}u'$.
			\item[$\bullet$]   If $s\in G$, then Proposition~\ref{prop:set} gives $s< \max(v_{V_a})$. This implies $\deg_{x_s} v =\deg_{x_s} u$, and $\deg_{x_s} v'= \deg_{x_s} v+1= \deg_{x_s}u'$. 
		\end{enumerate}  
		In both of the above cases, we have $\deg_{x_s} v' \leq \deg_{x_s} u'$, contradicting $\deg_{x_s} u' < \deg_{x_s}v'$. Thus our claim holds.
		
		As in Case 3, let $b \in \supp(u_{V_a})$ with $b < s$ such that $b \notin F$. Set $F'=(F\setminus \{c\} ) \cup \{b\}$ and $w=x_sx_cu/x_rx_b$. By Lemma~\ref{lem:powerpartite}, $w \in \mathcal{G}( I^k)$ and by Proposition~\ref{prop:set}(i) we have $F'\subseteq \set(w)$. Therefore $w'={\bf x}_{F}w \in \mathcal{G}(\HS_j(I^k))$ satisfies (\ref{w'}).
	\end{proof}
	
	We conclude the paper, with the following question.
	\begin{Question}
		Let $I(G)$ be an edge ideal with linear resolution. Is it true that
		$$
		\v(\HS_i(I(G)^k))=2k+(i-1)
		$$
		for all $k\gg0$ and all $i\le\pd\,I(G)^k$?
	\end{Question}
	
	For $i=0$ the above equation holds by \cite[Theorem 5.1]{F2023}.\bigskip
	
	\noindent\textbf{Acknowledgment.}
	A. Ficarra was partly supported by INDAM (Istituto Nazionale di Alta Matematica), and also by the Grant JDC2023-051705-I funded by
	MICIU/AEI/10.13039/501100011033 and by the FSE+. A.A. Qureshi is supported by Scientific and Technological Research Council of Turkey T\"UB\.{I}TAK under the Grant No: 124F113, and is thankful to T\"UB\.{I}TAK for their support.\vspace*{-0.4cm}


\begin{thebibliography}{99}
		\bibitem{Ban24} S. Bandari, \textit{Polymatroidal ideals and linear resolution}, Journal of Algebraic Systems, {\bf 11}(2024) 147-153.
		
		\bibitem{AB} A. Banerjee, \textit{The regularity of powers of edge ideals}, J. Algebraic Combin. {\bf41}(2015), no. 2, pp. 303--321.
		
		\bibitem{BDMS} E. Basser, R. Diethorn, R. Miranda, M. Stinson-Maas, \textit{Powers of Edge Ideals with Linear Quotients}, 2024, preprint \url{https://arxiv.org/abs/2412.03468}
		
		\bibitem{Bay019} S. Bayati, \textit{Multigraded shifts of matroidal ideals}, Arch. Math., (Basel) {\bf 111} (2018), no. 3, 239--246.
		
		\bibitem{Bay2023} S. Bayati, \textit{A Quasi-additive Property of Homological Shift Ideals}. Bulletin of the Malaysian Mathematical Sciences Society, 2023, 46(3), p.111.
		
		\bibitem{BJT019} S. Bayati, I. Jahani, N. Taghipour, \textit{Linear quotients and multigraded shifts of Borel ideals}, Bull. Aust. Math. Soc. 100 (2019), no. 1, 48--57.
		
		\bibitem{BMS24} P. Biswas, M. Mandal, K. Saha, \textit{Asymptotic behaviour and stability index of v-numbers of graded ideals}, 2024, preprint \url{arXiv:2402.16583}.
		
		\bibitem{B79} M. Brodmann, \textit{Asymptotic stability of $\textup{Ass}(M/I^nM)$}, Proc. Am. Math. Soc., 74(1979), 16--18
		
		\bibitem{B79a} M. Brodmann, \textit{The asymptotic nature of the analytic spread}, Math. Proc. Cambridge Philos. Soc., {\bf 86} (1979), 35--39.
		
		\bibitem{Conca23} A. Conca, \textit{A note on the v-invariant}, Proceedings of the American Mathematical Society, 2024, 152(6), pp. 2349--2351
		
		\bibitem{CF1} M. Crupi, A. Ficarra, \textit{Very well--covered graphs by Betti splittings}, J. Algebra {\bf 629}(2023) 76--108. https://doi.org/10.1016/j.jalgebra.2023.03.033.
		
		\bibitem{CF2} M. Crupi, A. Ficarra, \textit{Very well-covered graphs via the Rees algebra}, Mediterranean Journal of Mathematics 21.4 (2024): 135.
		
		\bibitem{CHT99} S.D. Cutkosky, J. Herzog, N.V. Trung. {\it Asymptotic Behaviour of the Castelnuovo--Mumford regularity}. Compositio Mathematica {\bf 118}, (1999), 243--261.
		
		\bibitem{Dirac61} G.A. Dirac, \textit{On rigid circuit graphs}, Abh. Math. Sem. Univ. Hamburg, {\bf 38} (1961), 71--76.
		
		\bibitem{FMO} C. Ferr\'o, M. Murgia, O. S. Olteanu \textit{Powers of edge ideals}, Le Matematiche, {\bf 67}(1), (2012), 129-144.
		
		\bibitem{F2} A. Ficarra, \textit{Homological shifts of polymatroidal ideals}, 2024, to appear in  Bull. math. Soc. Sci. Math. Roum., available at  \url{https://arxiv.org/abs/2205.04163}
		
		\bibitem{F2023}  A. Ficarra, \textit{Simon conjecture and the $\v$-number of monomial ideals}, Collectanea Mathematica (2024): 1-16. https://doi.org/10.1007/s13348-024-00441-z
		
		\bibitem{F-HHZ} A. Ficarra, {\em A new proof of the Herzog-Hibi-Zheng theorem}, 2024, preprint \url{https://arxiv.org/abs/2409.15853}.
		
		\bibitem{FPack1} A. Ficarra, \textit{Homological Shift Ideals: Macaulay2 Package}, 2023, preprint \url{https://arxiv.org/abs/2309.09271}.
		
		\bibitem{F-SCDP} A. Ficarra, \textit{Shellability of componentwise discrete polymatroids}, Electron. J. Comb., Volume 32, Issue 1 (2025), P1.41.
		
		\bibitem{FH2023} A. Ficarra, J. Herzog, \textit{Dirac’s Theorem and Multigraded Syzygies}. Mediterr. J. Math. 20, 134 (2023). https://doi.org/10.1007/s00009-023-02348-8
		
		\bibitem{FQ} A. Ficarra, A.A. Qureshi, \textit{The homological shift algebra of a monomial ideal}, (2024), preprint \url{https://arxiv.org/abs/2412.21031}.
		
		\bibitem{FS2} A. Ficarra, E. Sgroi, \textit{Asymptotic behaviour of the $\v$-number of homogeneous ideals}, 2023, preprint \url{https://arxiv.org/abs/2306.14243}.
		
		\bibitem{FSPack} A. Ficarra, E. Sgroi, \texttt{VNumber}, \textit{Macaulay2 Package} available at \url{https://github.com/EmanueleSgroi/VNumber}, 2024.
		
		\bibitem{FSPackA} A. Ficarra, E. Sgroi, \textit{Asymptotic behaviour of integer programming and the $\v$-function of a graded filtration}, 2024, preprint \url{https://arxiv.org/abs/2403.08435}.
		
		\bibitem{Froberg88} R. Fr\"oberg, \textit{On Stanley-Reisner rings}, Topics in algebra, Part 2 (Warsaw, 1988), 57--70, Banach Center Publ., 26, Part 2, PWN, Warsaw, 1990.
		
		\bibitem{GDS} D.~R.~Grayson, M.~E.~Stillman. {\em Macaulay2, a software system for research in algebraic geometry}. Available at \url{http://www.math.uiuc.edu/Macaulay2}.
		
		\bibitem{JS} A. V. Jayanthan, S. Selvaraja, \textit{Upper bounds for the regularity of powers of edge ideals of graphs}, Journal of Algebra 574, (2021), 184–205.
		
		\bibitem{KNQ} K. Khashyarmanesh, M. Nasernejad, and A. A. Qureshi, \textit{On the matroidal path ideals}, J. Algebra Appl. (2023), 2350227.
		
		\bibitem{HNTT} H.T. H\`a, H. Nguyen, N. Trung, T. Trung, \textit{Depth functions of powers of homogeneous ideals}, Proc. AMS, 149 (2021), 1837–1844.
		
		\bibitem{HHBook} J.~Herzog, T.~Hibi, \emph{Monomial ideals}, Graduate texts in Mathematics {\bf 260}, Springer, 2011.
		
		\bibitem{HHZ2004} J. Herzog, T. Hibi and X. Zheng, \textit{Monomial ideals whose powers have a linear resolution},	Math. Scand. (2004), 23--32.
		
		\bibitem{HMRZ021a} J. Herzog, S. Moradi, M. Rahimbeigi, G. Zhu, \textit{Homological shift ideals}. Collect. Math. {\bf 72} (2021), 157--74.
		
		\bibitem{HMRZ021b} J. Herzog, S. Moradi, M. Rahimbeigi, G. Zhu, \textit{Some homological properties of borel type ideals}, Comm. Algebra {\bf 51} (4) (2023) 1517--1531.
		
		\bibitem{HQ} J. Herzog, A.A. Qureshi, \textit{Persistence and stability properties of powers of ideals}, J. Pure Appl. Algebra, 219 (2015), 530--542.
		
		\bibitem{ET} J.~ Herzog, Y. Takayama, \textit{Resolutions by mapping cones}, in: The Roos Festschrift volume Nr.2(2), Homology, Homotopy and Applications {\bf 4}, (2002), 277--294.
		
		\bibitem{Kod99} V. Kodiyalam. {\it Asymptotic Behaviour of Castelnuovo-Mumford regularity}. Proc. Amer. Math. Soc. {\bf 198}(2), (1999), 407--411.
		
		\bibitem{LT} H.M. Lam, N.V. Trung, \textit{Associated primes of powers of edge ideals and ear decompositions of graphs}, Trans. Amer. Math. Soc. 372 (2019), 3211–3236.
		
		\bibitem{LW} D. Lu, Z. Wang, \textit{The resolutions of generalized co-letterplace ideals and their powers}, Journal of Algebra {\bf673}(2025): 321-350.
		
		\bibitem{MMV} J. Mart\'inez-Bernal, S. Morey, R. Villarreal, \textit{Associated primes of powers of edge ideals}, Collect. Math. {\bf63}, 361-374 (2012).
		
		\bibitem{MPNauty} B. D. McKay, A. Piperno, Practical Graph Isomorphism, II, J. Symbolic Comput. 60 (2014), 94–112. The software package Nauty is available at \url{http://cs.anu.edu.au/~bdm/nauty/}
		
		\bibitem{NP} E. Nevo, I. Peeva, \textit{$C_4$-free edge ideals}, J. Algebr. Combin. 37, 243–248 (2013).
		
		\bibitem{Rat} L. J. Ratliff Jr., \textit{On prime divisors of $I^n$, $n$ large}, Mich. Math. J. 23 (1976), 337--352.
		
		\bibitem{TBR24} N. Taghipour, S. Bayati, F. Rahmati, \textit{Homological linear quotients and edge ideals of graphs}. Bulletin of the Australian Mathematical Society (2024): 1-12.
	\end{thebibliography}
\end{document}